\numberwithin{equation}{section}
\newtheorem{thm}{Theorem}[section]
\newtheorem{lem}[thm]{Lemma}
\newtheorem{cor}[thm]{Corollary}
\theoremstyle{definition}
\newtheorem{defn}{Definition}[section]
\journal{}
\begin{document}

\begin{frontmatter}



\title{Pe{\l}czy\'{n}ski's property ($V^{*}$) of order $p$ and its quantification\tnoteref{label1}}
\tnotetext[label1]{Lei Li's research is partly supported by the NSF of China (11301285).
Dongyang Chen's project was supported by the Natural Science Foundation of Fujian Province of China (No. 2015J01026).
}


\author{Lei Li}
\address{School of Mathematical Sciences and LPMC, Nankai University, Tianjin, 300071, China}
\ead{leilee@nankai.edu.cn}

\author{Dongyang Chen\fnref{label2}}
\address{School of Mathematical Sciences, Xiamen University,
Xiamen,361005,China}
\ead{cdy@xmu.edu.cn}
\fntext[label2]{Corresponding author}

\author{J. Alejandro Ch\'{a}vez-Dom\'{i}nguez}
\address{Department of Mathematics, University of Oklahoma, Norman, Oklahoma, 73019,USA}
\ead{jachavezd@math.ou.edu}

\begin{abstract}
We introduce the concepts of Pe{\l}czy\'{n}ski's property ($V$) of order $p$ and Pe{\l}czy\'{n}ski's property ($V^{*}$) of order $p$. It is proved that, for each $1<p<\infty$, the James $p$-space $J_{p}$ enjoys Pe{\l}czy\'{n}ski's property ($V^{*}$) of order $p$ and the James $p^{*}$-space $J_{p^{*}}$ (where $p^{*}$ denotes the conjugate number of $p$) enjoys Pe{\l}czy\'{n}ski's property ($V$) of order $p$. We prove that
both $L_{1}(\mu)$ ($\mu$ a finite positive measure) and $l_{1}$ enjoy the quantitative version of Pe{\l}czy\'{n}ski's property ($V^{*}$).
\end{abstract}

\begin{keyword}
Pe{\l}czy\'{n}ski's property ($V$) of order $p$ \sep Pe{\l}czy\'{n}ski's property ($V^{*}$) of order $p$ \sep Quantifying Pe{\l}czy\'{n}ski's property ($V^*$) of order $p$


\MSC 46B20

\end{keyword}

\end{frontmatter}


\section{Introduction and notations}
Let $X$ and $Y$ be Banach spaces. Recall that an operator $T:X\rightarrow Y$ is called \textit{unconditionally converging} if $T$ takes weakly unconditionally Cauchy series in $X$ to unconditionally converging series in $Y$. In his fundamental paper \cite{P}, A. Pe{\l}czy\'{n}ski introduced property ($V$). A Banach space $X$ is said to have \textit{Pe{\l}czy\'{n}ski's property ($V$)} if every unconditionally converging operator with domain $X$ is unconditionally converging. Equivalently, $X$ has Pe{\l}czy\'{n}ski's property ($V$) if a bounded subset $K$ of $X^{*}$ is relatively weakly compact whenever $\lim_{n\rightarrow \infty}\sup_{x^{*}\in K}|<x^{*},x_{n}>|=0$ for every weakly unconditionally Cauchy series $\sum_{n=1}^{\infty}x_{n}$ in $X$. The most known classical Banach spaces that have Pe{\l}czy\'{n}ski's property ($V$) are spaces $C(\Omega)$ of continuous scalar-valued functions on compact Hausdorff space $\Omega$ \cite{P}, or more generally Banach spaces whose duals are isometric to $L_{1}$-spaces \cite{JZ}. Pe{\l}czy\'{n}ski's property ($V^{*}$) was introduced in \cite{P} as a dual property of Pe{\l}czy\'{n}ski's property ($V$). A Banach space $X$ is said to have \textit{Pe{\l}czy\'{n}ski's property ($V^{*}$)} if a bounded subset $K$ of $X$ is relatively weakly compact whenever $\lim_{n\rightarrow \infty}\sup_{x\in K}|<x^{*}_{n},x>|=0$ for every weakly unconditionally Cauchy series $\sum_{n=1}^{\infty}x^{*}_{n}$ in $X^{*}$. Among classical Banach spaces that have Pe{\l}czy\'{n}ski's property ($V^{*}$), $L_{1}$-spaces are the most notable ones.

The main goal of this paper is to generalize Pe{\l}czy\'{n}ski's property ($V$) and Pe{\l}czy\'{n}ski's property ($V^{*}$) to more general case. In Section 2, we introduce the concept of Pe{\l}czy\'{n}ski's property ($V$) of order $p$ $(1\leq p\leq\infty)$ (property $p$-($V$) in short). Property $1$-($V$) is precisely Pe{\l}czy\'{n}ski's property ($V$) and property $\infty$-($V$) is precisely the reciprocal Dunford-Pettis property (see \cite{KS} for this definition). It is clear that for each $1<p<\infty$, a Banach space $X$ has property $p$-($V$) whenever $X$ has  Pe{\l}czy\'{n}ski's property ($V$). It is natural to ask whether there exists a space enjoying property $p$-($V$) fails Pe{\l}czy\'{n}ski's property ($V$) for each $1<p<\infty$. In this section, we show that for each $1<p<\infty$, the James $p^{*}$-space $J_{p^{*}}$ (where $p^{*}$ denotes the conjugate number of $p$) has property $p$-($V$) (see Theorem \ref{3.9} below). But, $J_{p^{*}}$ clearly fails Pe{\l}czy\'{n}ski's property ($V$) because $J_{p^{*}}$ contains no copy of $c_{0}$ and is non-reflexive. It is proved in \cite{CG} that Pe{\l}czy\'{n}ski's property ($V$) is not a three-space property, that is, there exist a space $X$ failing Pe{\l}czy\'{n}ski's property ($V$) and a closed subspace $X_{0}$ of $X$ such that both $X_{0}$ and the quotient $X/X_{0}$ have Pe{\l}czy\'{n}ski's property ($V$). We extend this result to property $p$-($V$) and show that property $p$-($V$) are not three-space properties for each $1\leq p<\infty$. The concept of Pe{\l}czy\'{n}ski's property ($V^{*}$) of order $p$ (property $p$-($V^{*}$) in short) is introduced in this section. Property $1$-($V^{*}$) is precisely Pe{\l}czy\'{n}ski's property ($V^{*}$). Similarly, for each $1<p<\infty$, a Banach space $X$ has property $p$-($V^{*}$) whenever $X$ has Pe{\l}czy\'{n}ski's property ($V^{*}$). We show that the converse is false. For each $1<p<\infty$, the James $p$-space $J_{p}$ fails Pe{\l}czy\'{n}ski's property ($V^{*}$) because $J_{p}$ is not weakly sequentially complete. But $J_{p}$ enjoys property $p$-($V^{*}$) for each $1<p<\infty$ as shown in the following Theorem \ref{3.22}. In \cite{P}, A. Pe{\l}czy\'{n}ski proved that if a Banach space $X$ has both property ($V$) and property ($V^{*}$), then $X$ must be reflexive. However, Theorem \ref{3.9} and Theorem \ref{3.22} in this section tell us that the classical non-reflexive James space $J$ has both property $2$-($V$) and property $2$-($V^{*}$). A. Pe{\l}czy\'{n}ski showed in \cite{P} that if a Banach space $X$ has property ($V^{*}$), then $X$ must be weakly sequentially complete. Correspondingly, we introduce the notion of weak sequential completeness of order $p$ and show that if a Banach space $X$ has property $p$-($V^{*}$), then $X$ must be weakly sequentially complete of order $p$ for each $1<p<2$.

In \cite{B}, F. Bombal studied Pe{\l}czy\'{n}ski's property ($V^{*}$) in vector-valued sequence spaces and proved that given a sequence $(X_{n})_{n}$ of Banach spaces, the space $(\sum_{n=1}^{\infty}\oplus X_{n})_{p}$($1\leq p<\infty$) has Pe{\l}czy\'{n}ski's property ($V^{*}$) if and only if each $X_{n}$ does. Our Theorem \ref{3.13} and Theorem \ref{3.201} in Section 3 cover this result.
Moreover, we characterize $p$-($V$) sets and prove that the space $(\sum_{n=1}^{\infty}\oplus X_{n})_{p}$($1<p<\infty$) has property $q$-($V$)($1\leq q<\infty$) if and only if each $X_{n}$ does. In particular, we show that the space $(\sum_{n=1}^{\infty}\oplus X_{n})_{p}$($1<p<\infty$ or $p=0$)
has Pe{\l}czy\'{n}ski's property ($V$) if and only if each $X_{n}$ does.

Section 3 is concerned with quantifications of property $p$-($V^{*}$) and property $p$-($V$). H. Kruli\v{s}ov\'{a} \cite{K} introduced several possibilities of quantifying Pe{\l}czy\'{n}ski's property ($V$) and proved a quantitative version of Pe{\l}czy\'{n}ski's result about $C(K)$ spaces. More precisely, he proved that the space $C_{0}(\Omega)$ enjoys the quantitative property $(V_{q})^{*}_{\omega}$ with constant $\pi$ ($2$ in the real case) for every locally compact Hausdorff space $\Omega$. In this section, we introduce the concepts of quantitative Pe{\l}czy\'{n}ski's property $(V^{*})$ of order $p$ and quantitative Pe{\l}czy\'{n}ski's property $(V)$ of order $p$. First we prove quantitative versions of some results about
property $p$-($V^{*}$) and property $p$-($V$). It is proved in \cite{KKS} that the quantities $\omega(\cdot)$ and $wk(\cdot)$ are equal in $L_{1}(\mu)$ for a general positive measure $\mu$. In this section, we introduce a quantity $\iota_{p}(\cdot)(1\leq p<\infty)$ and prove that the quantities $wk(\cdot)$ and $\iota_{1}$ are equal in $L_{1}(\mu)$ ($\mu$ a finite positive measure) and $l_{1}$. In particular, both $L_{1}(\mu)$ ($\mu$ a finite positive measure) and $l_{1}$ have quantitative Pe{\l}czy\'{n}ski's property $(V^{*})$ with constant $1$. Finally, we show that
$c_{0}$ enjoys the quantitative property $(V_{q})^{*}_{\omega}$ with constant $1$.

Our notation and terminology are standard as may be found in \cite{AK} and \cite{LT}. Throughout the paper, all Banach spaces can be considered either real or complex unless stated otherwise. By an operator, we always mean a bounded linear operator.
$p^{*}$ will always denote the conjugate number of $p$ for $1\leq p<\infty$.
Let $X$ be a Banach space, $1\leq p<\infty$ and we denote $l^{w}_{p}(X)$ by the space of all weakly $p$-summable sequences in $X$, endowed with the norm $$\|(x_{n})_{n}\|_{p}^{w}=\sup\{(\sum_{n=1}^{\infty}|<x^{*},x_{n}>|^{p})^{\frac{1}{p}}:x^{*}\in B_{X^{*}}\}, \quad (x_{n})_{n}\in l^{w}_{p}(X).$$
A sequence $(x_{n})_{n}\in l^{w}_{p}(X)$ is \textit{unconditionally $p$-summable} if $$\sup\{(\sum_{n=m}^{\infty}|<x^{*},x_{n}>|^{p})^{\frac{1}{p}}:x^{*}\in B_{X^{*}}\}\rightarrow 0\quad\text{as}\; m\rightarrow \infty.$$
In \cite{CCL}, we extend unconditionally converging operators and completely continuous operators to the general case $1\leq p\leq \infty$. Let $1\leq p\leq \infty$. We say that an operator $T:X\rightarrow Y$ is \textit{unconditionally $p$-converging} if $T$ takes weakly $p$-summable sequences (weakly null sequences for $p=\infty$) to unconditionally $p$-summable sequences (norm null sequences for $p=\infty$).


\section{Pe{\l}czy\'{n}ski's property ($V$) of order $p$ and Pe{\l}czy\'{n}ski's property ($V^{*}$) of order $p$}

\begin{defn}
Let $1\leq p\leq \infty$. We say that a Banach space $X$ has \textit{Pe{\l}czy\'{n}ski's property ($V$) of order $p$} (property $p$-($V$) in short) if for every Banach space $Y$, every unconditionally $p$-converging operator $T:X\rightarrow Y$ is weakly compact.
\end{defn}

Obviously, for every $1\leq p<q\leq \infty$, a Banach space $X$ has property $q$-($V$) whenever $X$ has property $p$-($V$).

\begin{defn}\cite{CCL}
Let $X$ be a Banach space and $1\leq p\leq \infty$. We say that a bounded subset $K$ of $X^{*}$ is \textit{a $p$-($V$) set} if $$\lim_{n\rightarrow \infty}\sup_{x^{*}\in K}|<x^{*},x_{n}>|=0,$$ for every $(x_{n})_{n}\in l^{w}_{p}(X)$ ($(x_{n})_{n}\in c^{w}_{0}(X)$ for $p=\infty$).
\end{defn}
$1$-($V$) sets are called ($V$)-sets in \cite{CE} and $\infty$-($V$) sets are called ($L$)-sets (see \cite{E1} for example).
Before giving a useful characterization of a $p$-($V$) set, we recall the notion of weakly $p$-convergent sequences introduced in \cite{CS}. Let $1\leq p\leq \infty$. A sequence $(x_{n})_{n}$ in a Banach space $X$ is said to be \textit{weakly $p$-convergent to $x\in X$} if the sequence $(x_{n}-x)_{n}$ is weakly $p$-summable in $X$. Weakly $\infty$-convergent sequences are simply the weakly convergent sequences. The concept of weakly $p$-Cauchy sequences is introduced in \cite{CCL}. We say that a sequence $(x_{n})_{n}$ in a Banach space $X$ is \textit{weakly $p$-Cauchy} if for each pair of strictly increasing sequences $(k_{n})_{n}$ and $(j_{n})_{n}$ of positive integers, the sequence $(x_{k_{n}}-x_{j_{n}})_{n}$ is weakly $p$-summable in $X$.
Obviously, every weakly $p$-convergent sequence is weakly $p$-Cauchy, and the weakly $\infty$-Cauchy sequences are precisely the weakly Cauchy sequences.
J.M.F.Castillo and F.S\'{a}nchez said that a Banach space $X\in W_{p}(1\leq p\leq\infty)$ if any bounded sequence in $X$ admits a weakly $p$-convergent subsequence (see \cite{CS}). The following characterization of a $p$-($V$) set appears in \cite{CCL}.

\begin{thm}\cite{CCL}\label{3.8}
Let $1<p<\infty$ and $X$ be a Banach space. The following statements are equivalent about a bounded subset $K$ of $X^{*}$:
\item[(1)]$K$ is a $p$-($V$) set;
\item[(2)]For all spaces $Y\in W_{p}$ and for every operator $T$ from $Y$ into $X$, the subset $T^{*}(K)$ is relatively norm compact;
\item[(3)]For every operator $T$ from $l_{p^{*}}$ into $X$, the subset $T^{*}(K)$ is relatively norm compact.
\end{thm}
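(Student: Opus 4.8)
The plan is to prove the cycle of implications $(2)\Rightarrow(3)\Rightarrow(1)\Rightarrow(2)$. The implication $(2)\Rightarrow(3)$ is immediate once one observes that $l_{p^{*}}\in W_{p}$, since then $(3)$ is merely $(2)$ applied with $Y=l_{p^{*}}$. To see $l_{p^{*}}\in W_{p}$ I would use reflexivity of $l_{p^{*}}$ to extract from a given bounded sequence a weakly convergent subsequence $y_{n}\rightharpoonup y$, and then a gliding-hump argument: the weakly null sequence $(y_{n}-y)_{n}$ has a subsequence that is either small enough in norm to be weakly $p$-summable, or is seminormalized and hence equivalent to the unit vector basis of $l_{p^{*}}$, which is itself weakly $p$-summable; either way $(y_{n})_{n}$ has a weakly $p$-convergent subsequence.

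For $(3)\Rightarrow(1)$ I would argue by contraposition. If $K$ is not a $p$-($V$) set, fix $\varepsilon>0$, a sequence $(x_{n})_{n}\in l^{w}_{p}(X)$, and, after passing to a subsequence, functionals $x^{*}_{n}\in K$ with $|\langle x^{*}_{n},x_{n}\rangle|\geq\varepsilon$ for all $n$. The weakly $p$-summable sequence $(x_{n})_{n}$ induces an operator $T\colon l_{p^{*}}\to X$ with $Te_{n}=x_{n}$ and $\|T\|=\|(x_{n})_{n}\|_{p}^{w}$; identifying $l_{p^{*}}^{*}$ with $l_{p}$ one has $T^{*}x^{*}=(\langle x^{*},x_{n}\rangle)_{n}$, so the $n$-th coordinate of $T^{*}x^{*}_{n}$ has modulus at least $\varepsilon$. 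Hence for every $m$,
\[
\sup_{\xi\in T^{*}(K)}\ \sum_{k\geq m}|\xi_{k}|^{p}\ \geq\ \sum_{k\geq m}\bigl|(T^{*}x^{*}_{m})_{k}\bigr|^{p}\ \geq\ \varepsilon^{p},
\]
and the Fr\'{e}chet--Kolmogorov criterion for relative norm compactness in $l_{p}$ then shows that the bounded set $T^{*}(K)$ is not relatively norm compact, contradicting $(3)$.

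The substantive step is $(1)\Rightarrow(2)$. Let $K$ be a $p$-($V$) set, $Y\in W_{p}$, $T\colon Y\to X$ an operator, and suppose for contradiction that $T^{*}(K)$ is not relatively norm compact in $Y^{*}$; choose $\varepsilon>0$ and $(x^{*}_{n})_{n}\subseteq K$ with $\|T^{*}x^{*}_{n}-T^{*}x^{*}_{m}\|\geq\varepsilon$ for $n\neq m$. The first move is to note that $Y$, lying in $W_{p}$ with $1<p<\infty$, is reflexive — weakly $p$-convergent sequences converge weakly, so $B_{Y}$ is weakly sequentially compact — hence so is $Y^{*}$, and after passing to a subsequence I may assume $T^{*}x^{*}_{n}\rightharpoonup\varphi$ in $Y^{*}$, the norm separation being preserved. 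Next, for each $n$ pick $y_{n}\in B_{Y}$ with $|\langle T^{*}(x^{*}_{n}-x^{*}_{n+1}),y_{n}\rangle|\geq\varepsilon$ and, using $Y\in W_{p}$, pass to a subsequence $(y_{n_{k}})_{k}$ weakly $p$-convergent to some $y\in Y$. Then $(y_{n_{k}}-y)_{k}$ is weakly $p$-summable in $Y$, so $\bigl(T(y_{n_{k}}-y)\bigr)_{k}$ is weakly $p$-summable in $X$; since $K$ is a $p$-($V$) set and $x^{*}_{n_{k}},x^{*}_{n_{k}+1}\in K$, it follows that $\langle T^{*}(x^{*}_{n_{k}}-x^{*}_{n_{k}+1}),\,y_{n_{k}}-y\rangle\to 0$, whence
\[
\bigl|\langle T^{*}(x^{*}_{n_{k}}-x^{*}_{n_{k}+1}),y\rangle\bigr|\ \geq\ \bigl|\langle T^{*}(x^{*}_{n_{k}}-x^{*}_{n_{k}+1}),y_{n_{k}}\rangle\bigr|-o(1)\ \geq\ \varepsilon-o(1).
\]
On the other hand $(T^{*}x^{*}_{n_{k}})_{k}$ and $(T^{*}x^{*}_{n_{k}+1})_{k}$ are subsequences of $(T^{*}x^{*}_{n})_{n}$, hence both converge weakly to $\varphi$, so $T^{*}x^{*}_{n_{k}}-T^{*}x^{*}_{n_{k}+1}\rightharpoonup 0$ in $Y^{*}$; since $y\in Y\subseteq Y^{**}$, this forces $\langle T^{*}(x^{*}_{n_{k}}-x^{*}_{n_{k}+1}),y\rangle\to 0$, a contradiction.

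I expect the main obstacle to be precisely the final manoeuvre in $(1)\Rightarrow(2)$: a careless choice of the norming vectors $y_{n}$ controls the residual term $\langle T^{*}(x^{*}_{n_{k}}-x^{*}_{n_{k}+1}),y\rangle$ only for the \emph{fixed} vector $Ty$, which is not in itself contradictory. The way around this is to first extract a weakly convergent subsequence of $(T^{*}x^{*}_{n})_{n}$ in $Y^{*}$ — possible exactly because $Y\in W_{p}$ is reflexive — so that the difference $T^{*}x^{*}_{n_{k}}-T^{*}x^{*}_{n_{k}+1}$ tends weakly to $0$ and the residual term must vanish. The remaining ingredients — that bounded operators carry weakly $p$-summable sequences to weakly $p$-summable sequences, the identification of $l^{w}_{p}(X)$ with operators from $l_{p^{*}}$, the norm-compactness criterion in $l_{p}$, and the fact $l_{p^{*}}\in W_{p}$ — are routine.
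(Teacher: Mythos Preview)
The paper does not actually prove this theorem; it is quoted from \cite{CCL} without proof. Your argument is correct and, for the substantive implication $(1)\Rightarrow(2)$, follows exactly the template the paper does give for the dual statement, Theorem~\ref{3.4}: reflexivity of the $W_{p}$ space, extraction of a weakly convergent subsequence on the dual side, choice of norming vectors in the unit ball, a further weakly $p$-convergent subsequence via the $W_{p}$ hypothesis, and the contradiction coming from the $p$-($V$)/$p$-($V^{*}$) condition applied to the weakly $p$-summable image. Your use of consecutive indices $x^{*}_{n},x^{*}_{n+1}$ in place of a single weak limit is a harmless variant of the same idea. One cosmetic nit: from $\|T^{*}x^{*}_{n}-T^{*}x^{*}_{n+1}\|\geq\varepsilon$ you can only secure $|\langle T^{*}(x^{*}_{n}-x^{*}_{n+1}),y_{n}\rangle|>\varepsilon-\delta_{n}$ with $\delta_{n}\to 0$, not $\geq\varepsilon$; this changes nothing in the contradiction.
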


In case of $p=1$, R. Cilia and G. Emmanuele proved that a bounded subset $K$ of $X^{*}$ is a $1$-($V$) set if and only if for every operator $T$ from $c_{0}$ into $X$, the subset $T^{*}(K)$ is relatively norm compact (see \cite{CE}). However, the equivalence between (1) and (3) of Theorem \ref{3.8} is false for $p=\infty(p^{*}=1)$. For instance, by Schur property, every bounded subset of $l_{\infty}$ is a $\infty$-($V$) set, but $B_{l_{\infty}}$ is not relatively norm compact.

Let us fix some notations. If $A$ and $B$ are nonempty subsets of a Banach space $X$, we set $$d(A,B)=\inf\{\|a-b\|:a\in A,b\in B\},$$$$\widehat{d}(A,B)=\sup\{d(a,B):a\in A\}.$$ Thus, $d(A,B)$ is the ordinary distance between $A$ and $B$, and $\widehat{d}(A,B)$ is the non-symmetrized Hausdorff distance from $A$ to $B$.

Let $X$ be a Banach space and $A$ be a bounded subset of $X^{*}$. For $1\leq p\leq\infty$, we set
\begin{center}
$\xi_{p}(A)=\inf\{\widehat{d}(A,K): K\subset X^{*}$ is a $p$-($V$) set $\}.$
\end{center}
It is clear that $\xi_{p}(A)=0$ if and only if $A$ is a $p$-($V$) set.

Let $A$ be a bounded subset of a Banach space $X$.
The de Blasi measure of weak non-compactness of $A$ is defined by
\begin{center}
$\omega(A)=\inf\{\widehat{d}(A,K):\emptyset\neq K\subset X$ is weakly compact $\}.$
\end{center}
Then $\omega(A)=0$ if and only if $A$ is relatively weakly compact.
For an operator $T: X\rightarrow Y$, we denote $\xi_{p}(TB_{X}), \omega(TB_{X})$ by $\xi_{p}(T), \omega(T)$ respectively.

To characterize property $p$-($V$), we need the following result in \cite{CCL}.

\begin{thm}\cite{CCL}\label{3.2}
Let $1\leq p\leq \infty$. The following statements about an operator $T:X\rightarrow Y$ are equivalent:
\item[(1)]$T$ is unconditionally $p$-converging;
\item[(2)]$T$ sends weakly $p$-summable sequences onto norm null sequences;
\item[(3)]$T$ sends weakly $p$-Cauchy sequences onto norm convergent sequences.
\end{thm}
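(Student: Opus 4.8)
The plan is to establish $(1)\Rightarrow(2)$, $(2)\Rightarrow(1)$ and $(2)\Leftrightarrow(3)$, treating $1\le p<\infty$ and $p=\infty$ in parallel; for $p=\infty$, where ``weakly $p$-summable'' means weakly null, ``unconditionally $p$-summable'' means norm null and ``weakly $p$-Cauchy'' means weakly Cauchy, statements (1) and (2) are literally identical, so only $(2)\Leftrightarrow(3)$ carries content there.

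For $(1)\Rightarrow(2)$ with $1\le p<\infty$: if $(x_n)_n\in l^{w}_{p}(X)$ then $(Tx_n)_n$ is unconditionally $p$-summable, and for each $n$
$$\|Tx_n\|=\sup_{y^{*}\in B_{Y^{*}}}|\langle y^{*},Tx_n\rangle|\le\sup_{y^{*}\in B_{Y^{*}}}\Big(\sum_{k\ge n}|\langle y^{*},Tx_k\rangle|^{p}\Big)^{1/p},$$
whose right-hand side is the tail modulus of unconditional $p$-summability of $(Tx_n)_n$ and hence tends to $0$; so $\|Tx_n\|\to0$. For $(3)\Rightarrow(2)$: a weakly $p$-summable sequence $(x_n)_n$ is weakly $p$-convergent to $0$, hence weakly $p$-Cauchy, so (3) makes $(Tx_n)_n$ norm convergent; since $x_n\to0$ weakly and $T$ is bounded, $Tx_n\to0$ weakly, and therefore the norm limit is $0$. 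For $(2)\Rightarrow(3)$: if $(x_n)_n$ is weakly $p$-Cauchy but $(Tx_n)_n$ is not norm Cauchy, pick $\varepsilon>0$ and build recursively strictly increasing sequences $(k_n)_n,(j_n)_n$ with $k_n<j_n<k_{n+1}$ and $\|Tx_{k_n}-Tx_{j_n}\|\ge\varepsilon$; by the definition of weakly $p$-Cauchy, $(x_{k_n}-x_{j_n})_n\in l^{w}_{p}(X)$, so (2) forces $\|Tx_{k_n}-Tx_{j_n}\|\to0$, a contradiction, whence $(Tx_n)_n$ is norm Cauchy, hence norm convergent. (For $p=\infty$, replace $l^{w}_{p}(X)$ by the weakly null sequences throughout.)

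The substantive step is $(2)\Rightarrow(1)$, by a blocking (gliding-hump) argument. Let $(x_n)_n\in l^{w}_{p}(X)$; then also $(Tx_n)_n\in l^{w}_{p}(Y)$ (apply $T^{*}$). If $(Tx_n)_n$ is not unconditionally $p$-summable, then — using that each scalar series $\sum_n|\langle y^{*},Tx_n\rangle|^{p}$ converges — one extracts $\varepsilon>0$, pairwise disjoint finite intervals $I_1<I_2<\cdots$ of positive integers, and functionals $y_k^{*}\in B_{Y^{*}}$ with $\sum_{n\in I_k}|\langle y_k^{*},Tx_n\rangle|^{p}>\varepsilon^{p}$. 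Put $u_k=\sum_{n\in I_k}a_n x_n$, where $a_n=\overline{\langle y_k^{*},Tx_n\rangle}\,|\langle y_k^{*},Tx_n\rangle|^{p-2}$ (interpreted as $0$ where $\langle y_k^{*},Tx_n\rangle=0$); when $p=1$ this reads $a_n=\operatorname{sign}(\langle y_k^{*},Tx_n\rangle)$, so $|a_n|\le1$. Then $\langle y_k^{*},Tu_k\rangle=\sum_{n\in I_k}|\langle y_k^{*},Tx_n\rangle|^{p}>\varepsilon^{p}$, whence $\|Tu_k\|>\varepsilon^{p}$ for every $k$. On the other hand, since $(p-1)p^{*}=p$ we have $\sum_{n\in I_k}|a_n|^{p^{*}}=\sum_{n\in I_k}|\langle y_k^{*},Tx_n\rangle|^{p}\le(\|(Tx_n)_n\|_p^{w})^{p}$, so H\"older's inequality gives, for every $x^{*}\in B_{X^{*}}$,
$$|\langle x^{*},u_k\rangle|\le\Big(\sum_{n\in I_k}|a_n|^{p^{*}}\Big)^{1/p^{*}}\Big(\sum_{n\in I_k}|\langle x^{*},x_n\rangle|^{p}\Big)^{1/p};$$
raising this to the $p$-th power, summing over $k$, and using disjointness of the $I_k$ (so that $\sum_k\sum_{n\in I_k}|\langle x^{*},x_n\rangle|^{p}\le\sum_n|\langle x^{*},x_n\rangle|^{p}$) yields $\sup_{x^{*}\in B_{X^{*}}}\sum_k|\langle x^{*},u_k\rangle|^{p}<\infty$; that is, $(u_k)_k\in l^{w}_{p}(X)$. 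Now (2) forces $\|Tu_k\|\to0$, contradicting $\|Tu_k\|>\varepsilon^{p}$. For $p=\infty$, (1) and (2) coincide. This establishes all the equivalences.

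The crux, and the only place a genuine idea enters, is $(2)\Rightarrow(1)$: one must choose the coefficients of the blocks $u_k$ so that $\|Tu_k\|$ stays bounded away from $0$ \emph{and} $(u_k)_k$ is provably weakly $p$-summable with $\|(u_k)_k\|_p^{w}$ controlled uniformly over $B_{X^{*}}$. The normalization $a_n=\overline{\langle y_k^{*},Tx_n\rangle}\,|\langle y_k^{*},Tx_n\rangle|^{p-2}$ — for which $|a_n|^{p^{*}}=|\langle y_k^{*},Tx_n\rangle|^{p}$ thanks to the identity $(p-1)p^{*}=p$ — is exactly what makes the H\"older estimate close up. The endpoint $p=1$ (where $p^{*}=\infty$) has to be handled by the cruder choice $|a_n|\equiv1$, and $p=\infty$ is vacuous; everything else, including $(1)\Rightarrow(2)$ and $(2)\Leftrightarrow(3)$, is routine.
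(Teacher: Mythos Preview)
The paper does not supply its own proof of this theorem: it is quoted verbatim from \cite{CCL} and only cited, so there is nothing in the present paper to compare your argument against. That said, your proposal is a correct and complete self-contained proof. The implications $(1)\Rightarrow(2)$ and $(2)\Leftrightarrow(3)$ are handled exactly as one would expect, and your gliding-hump argument for $(2)\Rightarrow(1)$ is sound: the key point --- choosing the block coefficients $a_n=\overline{\langle y_k^{*},Tx_n\rangle}\,|\langle y_k^{*},Tx_n\rangle|^{p-2}$ so that $|a_n|^{p^{*}}=|\langle y_k^{*},Tx_n\rangle|^{p}$ --- is precisely what lets the H\"older bound and the disjointness of the $I_k$ combine to give $(u_k)_k\in l^{w}_{p}(X)$, while simultaneously forcing $\|Tu_k\|>\varepsilon^{p}$. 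Your separate treatment of $p=1$ (with $|a_n|\equiv1$) and the observation that $p=\infty$ collapses (1) and (2) are also correct.
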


The following result shows that Pe{\l}czy\'{n}ski's property ($V$) of order $p$ is automatically quantitative in some sense.
\begin{thm}\label{3.1}
Let $1\leq p\leq\infty$ and $X$ be a Banach space. The following are equivalent:
\item[(1)]$X$ has property $p$-($V$);
\item[(2)]Every $p$-(V) subset of $X^{*}$ is relatively weakly compact;
\item[(3)]$\omega(T^{*})\leq \xi_{p}(T^{*})$ for every operator $T$ from $X$ into any Banach space $Y$;
\item[(4)]$\omega(A)\leq \xi_{p}(A)$ for every bounded subset $A$ of $X^{*}$.
\end{thm}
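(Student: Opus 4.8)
The plan is to prove the cycle of implications $(1)\Rightarrow(2)\Rightarrow(3)\Rightarrow(4)\Rightarrow(1)$, with the first and last implications being the ``qualitative'' content and the middle ones being the quantitative refinement.

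For $(1)\Rightarrow(2)$: suppose $X$ has property $p$-($V$) and let $K\subset X^{*}$ be a $p$-($V$) set. By Theorem~\ref{3.8} (equivalence of (1) and (3)), for every operator $T:l_{p^{*}}\to X$ the set $T^{*}(K)$ is relatively norm compact. The standard Pe{\l}czy\'{n}ski-type argument now shows that if $K$ were \emph{not} relatively weakly compact, one could extract a sequence in $K$ with no weakly convergent subsequence; using the failure of weak compactness together with the $p$-($V$) property of $K$, one builds an operator $S:X\to Y$ into some Banach space which is unconditionally $p$-converging (because $S^{*}$ maps bounded sets into $p$-($V$) sets, hence behaves well on weakly $p$-summable sequences) but not weakly compact, contradicting property $p$-($V$). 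Here one uses Theorem~\ref{3.2} to pass freely between the sequential formulations of ``unconditionally $p$-converging.'' The key point is the duality: $T:X\to Y$ is unconditionally $p$-converging essentially when $T^{*}B_{Y^{*}}$ is a $p$-($V$) set, and $T$ is weakly compact iff $T^{*}$ is.

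For $(2)\Rightarrow(3)$: fix an operator $T:X\to Y$ and let $r>\xi_{p}(T^{*})=\xi_{p}(T^{*}B_{Y^{*}})$. By definition of $\xi_{p}$ there is a $p$-($V$) set $K\subset X^{*}$ with $\widehat{d}(T^{*}B_{Y^{*}},K)<r$. By $(2)$, $K$ is relatively weakly compact, so its closed absolutely convex hull $\overline{\operatorname{co}}(K\cup(-K))$ (or in the complex case the closed absolutely convex hull) is weakly compact by Krein's theorem, and it still lies within $\widehat{d}$-distance $r$ of $T^{*}B_{Y^{*}}$ since the latter is convex and balanced. Hence $\omega(T^{*})\le\omega(T^{*}B_{Y^{*}})\le r$. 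Letting $r\downarrow\xi_{p}(T^{*})$ gives $\omega(T^{*})\le\xi_{p}(T^{*})$. The implication $(3)\Rightarrow(4)$ is immediate once we observe that every bounded subset $A$ of $X^{*}$ can be realized, up to a harmless scaling, as $T^{*}B_{Y^{*}}$ for a suitable operator $T$: take $Y=l_{1}(A)$ (or $l_{1}(B_{X^{*}})$) with the canonical map $T:X\to l_{\infty}(A)^{*}$-type construction, or more simply note that the inclusion $A\hookrightarrow X^{*}$ factors through the adjoint of the natural operator $X\to \ell_{\infty}(\Gamma)$ for an appropriate index set; then $\xi_{p}$ and $\omega$ of $A$ are controlled by those of $T^{*}$. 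Finally $(4)\Rightarrow(1)$: if $T:X\to Y$ is unconditionally $p$-converging, then $T^{*}B_{Y^{*}}$ is a $p$-($V$) set, so $\xi_{p}(T^{*}B_{Y^{*}})=0$; by $(4)$, $\omega(T^{*}B_{Y^{*}})=0$, i.e.\ $T^{*}$ is weakly compact, hence $T$ is weakly compact.

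The main obstacle is $(1)\Rightarrow(2)$: turning the failure of relative weak compactness of a $p$-($V$) set into a concrete unconditionally $p$-converging, non-weakly-compact operator. This requires the right sequential extraction (an $\ell_{1}$-type or Rosenthal-type dichotomy adapted to the weakly $p$-summable setting) together with the characterization in Theorem~\ref{3.8} to guarantee the operator one builds is genuinely unconditionally $p$-converging; the formulations in Theorem~\ref{3.2} will be used repeatedly to switch between the ``weakly $p$-summable $\to$ norm null'' and ``weakly $p$-Cauchy $\to$ norm convergent'' descriptions. The remaining implications are soft: $(2)\Rightarrow(3)$ is Krein's theorem plus convexity, $(3)\Rightarrow(4)$ is a factorization/representation of bounded sets as adjoint images of the unit ball, and $(4)\Rightarrow(1)$ is a direct unwinding of definitions.
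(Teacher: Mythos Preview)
Your cycle $(1)\Rightarrow(2)\Rightarrow(3)\Rightarrow(4)\Rightarrow(1)$ is organized differently from the paper's (which does $(1)\Rightarrow(2)\Rightarrow(3)\Rightarrow(1)$ together with $(2)\Leftrightarrow(4)$), and two of your steps deserve comment.

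For $(1)\Rightarrow(2)$ you describe this as the ``main obstacle'' and invoke Rosenthal-type dichotomies and Theorem~\ref{3.8}. This is a significant overcomplication. The construction is completely explicit: given any sequence $(x_n^*)_n$ in the $p$-($V$) set $K$, define $T:X\to \ell_\infty$ by $Tx=(\langle x_n^*,x\rangle)_n$. Since $K$ is a $p$-($V$) set, for every weakly $p$-summable $(x_n)_n$ one has $\|Tx_n\|=\sup_k|\langle x_k^*,x_n\rangle|\to 0$, so $T$ is unconditionally $p$-converging by Theorem~\ref{3.2}. Property $p$-($V$) gives $T$, hence $T^*$, weakly compact; since $T^*e_n=x_n^*$ (with $(e_n)$ the unit vectors of $\ell_1\subset \ell_\infty^*$), the sequence $(x_n^*)_n$ is relatively weakly compact. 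Eberlein--\v{S}mulian finishes. No extraction, no dichotomy, no appeal to Theorem~\ref{3.8}.

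Your step $(3)\Rightarrow(4)$ has a genuine gap. You claim every bounded $A\subset X^*$ can be realized ``up to harmless scaling'' as $T^*B_{Y^*}$, but this is not true, and the weaker fact that $A\subset T^*B_{Y^*}$ for the evaluation operator $T:X\to\ell_\infty(A)$ gives the inequality $\omega(A)\le\omega(T^*B_{Y^*})$ in the right direction, yet you would then need $\xi_p(T^*B_{Y^*})\le\xi_p(A)$, which does not follow: a $p$-($V$) set $K$ close to $A$ need not be close to the larger set $T^*B_{Y^*}$. The natural and trivial direction is $(4)\Rightarrow(3)$ (just take $A=T^*B_{Y^*}$), and $(2)\Rightarrow(4)$ is immediate: if $\widehat{d}(A,K)<r$ with $K$ a $p$-($V$) set, then by $(2)$ the set $K$ is relatively weakly compact, so its weak closure witnesses $\omega(A)<r$. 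Reorganize your cycle to route through $(2)\Rightarrow(4)\Rightarrow(3)$ (or simply add $(2)\Leftrightarrow(4)$ as the paper does) and the argument goes through with no need for Krein's theorem or factorization tricks.
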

\begin{proof}
$(1)\Rightarrow(2)$. Suppose that $K$ is a $p$-($V$) subset of $X^{*}$. Take any sequence $(x^{*}_{n})_{n}$ in $K$. Define an operator $T: X\rightarrow l_{\infty}$ by $$Tx=(<x^{*}_{n},x>)_{n}, \quad x\in X.$$ Then, for every $(x_{n})_{n}\in l^{w}_{p}(X)$, we have $$\|Tx_{n}\|=\sup_{k}|<x^{*}_{k},x_{n}>|\leq \sup_{x^{*}\in K}|<x^{*},x_{n}>|\rightarrow 0 \quad (n\rightarrow \infty).$$ It follows from Theorem \ref{3.2} that $T$ is unconditionally $p$-converging. By (1), the operator $T$ is weakly compact and hence $T^{*}$ is also weakly compact. This implies that the set $T^{*}B_{l^{*}_{\infty}}$ is relatively weakly compact. It is easy to see that $T^{*}e_{n}=x^{*}_{n}$ for each $n\in \mathbb{N}$, where $(e_{n})_{n}$ is the unit vector basis of $l_{1}$. So the sequence $(x^{*}_{n})_{n}$ is relatively weakly compact.

$(2)\Rightarrow(3)$ is obvious. $(3)\Rightarrow(1)$ is immediate from Theorem \ref{3.2}. The equivalence of $(2)\Leftrightarrow (4)$ is straightforward.
\end{proof}

\begin{cor}\label{3.6}
Let $1\leq p\leq \infty$. If a Banach space $X$ has property $p$-($V$), then every quotient of $X$ has property $p$-($V$).
\end{cor}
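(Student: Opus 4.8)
The plan is to argue directly from the operator-theoretic definition of property $p$-($V$). Write the quotient as $Z=X/X_{0}$ for a closed subspace $X_{0}\subseteq X$, and let $Q\colon X\rightarrow Z$ be the canonical quotient map. Fix a Banach space $Y$ and an unconditionally $p$-converging operator $T\colon Z\rightarrow Y$; by the very definition of property $p$-($V$) it suffices to prove that $T$ is weakly compact.

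First I would check that the composition $TQ\colon X\rightarrow Y$ is unconditionally $p$-converging. Since $Q^{*}$ is an isometric embedding of $Z^{*}$ into $X^{*}$, for every $(x_{n})_{n}\in l^{w}_{p}(X)$ the image $(Qx_{n})_{n}$ lies in $l^{w}_{p}(Z)$ with $\|(Qx_{n})_{n}\|^{w}_{p}\leq\|(x_{n})_{n}\|^{w}_{p}$ (and weakly null sequences are sent to weakly null sequences in the case $p=\infty$). By Theorem \ref{3.2}, $T$ being unconditionally $p$-converging means precisely that $T$ carries weakly $p$-summable sequences to norm null sequences, so $(TQx_{n})_{n}$ is norm null; a second application of Theorem \ref{3.2} then gives that $TQ$ is unconditionally $p$-converging. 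Since $X$ has property $p$-($V$), the operator $TQ$ is weakly compact.

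It remains to pass from weak compactness of $TQ$ to weak compactness of $T$. Because $Q$ is a quotient map, $Q(B_{X})$ is dense in $B_{Z}$, whence $T(B_{Z})\subseteq\overline{T(Q(B_{X}))}^{\,\|\cdot\|}\subseteq\overline{TQ(B_{X})}^{\,w}$; the last set is weakly compact, being the weak closure of the relatively weakly compact set $TQ(B_{X})$. Hence $T(B_{Z})$ is relatively weakly compact, i.e. $T$ is weakly compact, and $Z$ has property $p$-($V$).

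There is no genuine obstacle here: this is a routine ``composition with a quotient map'' argument, the only two points needing a line of care being (i) the use of the sequential reformulation in Theorem \ref{3.2} to transfer unconditional $p$-convergence from $T$ to $TQ$, and (ii) the combination of density of $Q(B_{X})$ in $B_{Z}$ with the fact that the weak closure of a relatively weakly compact set is weakly compact. Alternatively, one could run the entire proof on the dual side via Theorem \ref{3.1}(2): if $K\subseteq Z^{*}$ is a $p$-($V$) set, the same supremum computation shows $Q^{*}(K)$ is a $p$-($V$) set in $X^{*}$, hence relatively weakly compact by property $p$-($V$) of $X$; since $Q^{*}$ is an isomorphic embedding onto the norm-closed (therefore weakly closed) subspace $X_{0}^{\perp}$, the inverse $(Q^{*})^{-1}$ is weak-to-weak continuous on $X_{0}^{\perp}$, so $K=(Q^{*})^{-1}(Q^{*}(K))$ is relatively weakly compact in $Z^{*}$, and Theorem \ref{3.1} applies.
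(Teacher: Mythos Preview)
Your proof is correct. The paper states this as a corollary with no proof, evidently regarding it as immediate from the definition of property $p$-($V$) (or from Theorem~\ref{3.1}); both of your approaches --- the direct operator-theoretic one via Theorem~\ref{3.2} and the dual-side one via Theorem~\ref{3.1}(2) --- are precisely the routine verifications one would supply, and there is nothing to add.
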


Recall that the James $p$-space $J_{p}(1<p<\infty)$ is the (real) Banach space of all sequences $(a_{n})_{n}$ of real numbers such that $\lim_{n\rightarrow \infty}a_{n}=0$ and
\begin{eqnarray*}
\|(a_{n})_{n}\|_{cpv}&=&\frac{1}{2^{\frac{1}{p}}}\sup\{(\sum_{j=1}^{m}|a_{i_{j-1}}-a_{i_{j}}|^{p}+|a_{i_{m}}-a_{i_{0}}|^{p})^{\frac{1}{p}}:1\leq i_{0}<i_{1}<\cdots<i_{m}, m\in \mathbb{N}\}\\
&<&\infty.
\end{eqnarray*}
Another useful equivalent norm on $J_{p}$ is given by the formula
$$\|(a_{n})_{n}\|_{pv}=\sup\{(\sum_{j=1}^{m}|a_{i_{j-1}}-a_{i_{j}}|^{p})^{\frac{1}{p}}:1\leq i_{0}<i_{1}<\cdots<i_{m}, m\in \mathbb{N}\}.$$
In fact,
\begin{equation}\label{69}
\frac{1}{2^{\frac{1}{p}}}\|\cdot\|_{pv}\leq \|\cdot\|_{cpv}\leq \|\cdot\|_{pv}.
\end{equation}
The sequence $(e_{n})_{n}$ of standard unit vectors forms a monotone shrinking basis for $J_{p}$ in both norms $\|\cdot\|_{pv}$ and $\|\cdot\|_{cpv}$. It is known that $J_{p}$ is non-reflexive and is codimension of $1$ in $J^{**}_{p}$, but every infinite-dimensional closed subspace of $J_{p}$ contains a subspace isomorphic to $l_{p}$.

The following lemma may appear somewhere. Its proof is identical to \cite[Proposition 3.4.3]{AK}.
\begin{lem}\label{3.90}
Let $(x_{k})_{k}$ be a normalized block basic sequence with respect to $(e_{n})_{n}$ in $(J_{p},\|\cdot\|_{pv})$. Then, for any sequence $(\lambda_{k})_{k=1}^{n}$ of real numbers and any $n\in\mathbb{N}$ the following estimate holds:
$$\|\sum_{k=1}^{n}\lambda_{k}x_{k}\|_{pv}\leq (1+2^{p})^{\frac{1}{p}}(\sum_{k=1}^{n}|\lambda_{k}|^{p})^{\frac{1}{p}}.$$
\end{lem}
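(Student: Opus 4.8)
The plan is to adapt, for a general exponent $p$, the argument used for the James space $J=J_{2}$ in \cite[Proposition 3.4.3]{AK}. Write each block over a consecutive interval, $x_{k}=\sum_{i=q_{k-1}+1}^{q_{k}}a^{(k)}_{i}e_{i}$ with $0=q_{0}<q_{1}<q_{2}<\cdots$, so that the intervals $I_{k}=\{q_{k-1}+1,\dots,q_{k}\}$ are pairwise disjoint and consecutive, $\mathrm{supp}(x_{k})\subseteq I_{k}$, and $\|x_{k}\|_{pv}=1$. Put $y=\sum_{k=1}^{n}\lambda_{k}x_{k}$; then $y$ is finitely supported, with $y(i)=\lambda_{k}a^{(k)}_{i}$ for $i\in I_{k}$ ($k\le n$) and $y(i)=0$ for $i>q_{n}$. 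To estimate $\|y\|_{pv}$ I fix an admissible configuration $1\le i_{0}<i_{1}<\cdots<i_{m}$ (after a harmless reduction we may keep at most one index exceeding $q_{n}$, which changes nothing) and bound $\sum_{j=1}^{m}|y(i_{j-1})-y(i_{j})|^{p}$; the supremum of this quantity over all admissible configurations equals $\|y\|_{pv}^{p}$. The crucial structural remark is that, since the $I_{k}$ are consecutive, the indices $i_{j}$ lying in a fixed $I_{k}$ form a set of consecutive terms $i_{s}<i_{s+1}<\cdots<i_{t}$ of the configuration.

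Next I split the consecutive pairs $(i_{j-1},i_{j})$, $j=1,\dots,m$, into \emph{internal} pairs, for which $i_{j-1}$ and $i_{j}$ lie in the same $I_{k}$, and \emph{external} pairs otherwise. For the internal pairs inside $I_{k}$ one has $y(i_{j-1})-y(i_{j})=\lambda_{k}\bigl(a^{(k)}_{i_{j-1}}-a^{(k)}_{i_{j}}\bigr)$, and these pairs run along the admissible configuration $i_{s}<\cdots<i_{t}$ for $x_{k}$, so their total contribution is $|\lambda_{k}|^{p}\sum|a^{(k)}_{i_{j-1}}-a^{(k)}_{i_{j}}|^{p}\le|\lambda_{k}|^{p}\|x_{k}\|_{pv}^{p}=|\lambda_{k}|^{p}$; summing over $k$, the internal part is at most $\sum_{k=1}^{n}|\lambda_{k}|^{p}$. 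For an external pair I use $|y(i_{j-1})-y(i_{j})|^{p}\le 2^{p-1}\bigl(|y(i_{j-1})|^{p}+|y(i_{j})|^{p}\bigr)$, together with two observations: first, since $x_{k}\in c_{0}$ is finitely supported, $\|x_{k}\|_{\infty}\le\|x_{k}\|_{pv}=1$, so for $i\in I_{k}$ one has $|y(i)|^{p}=|\lambda_{k}|^{p}|a^{(k)}_{i}|^{p}\le|\lambda_{k}|^{p}$; second, because the indices in $I_{k}$ form a contiguous run $i_{s},\dots,i_{t}$, the only external pairs that meet $I_{k}$ are $(i_{s-1},i_{s})$ and $(i_{t},i_{t+1})$, so each $I_{k}$ is responsible for at most two (external pair, endpoint) incidences. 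Hence the external part is at most $2^{p-1}\cdot 2\sum_{k=1}^{n}|\lambda_{k}|^{p}=2^{p}\sum_{k=1}^{n}|\lambda_{k}|^{p}$.

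Adding the two estimates yields $\sum_{j=1}^{m}|y(i_{j-1})-y(i_{j})|^{p}\le(1+2^{p})\sum_{k=1}^{n}|\lambda_{k}|^{p}$, and taking the supremum over admissible configurations and then a $p$-th root gives $\|\sum_{k=1}^{n}\lambda_{k}x_{k}\|_{pv}\le(1+2^{p})^{1/p}\bigl(\sum_{k=1}^{n}|\lambda_{k}|^{p}\bigr)^{1/p}$, which is the assertion. The one point that genuinely requires care is the counting in the second paragraph: one must arrange the block decomposition so that the supports are carved out of \emph{consecutive} intervals $I_{k}$, for then no index of the configuration falls ``between'' two blocks, the indices in a single $I_{k}$ truly form a contiguous run, and each block is touched by at most two external pairs. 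With this set-up in place, the remaining computation is exactly the one in \cite[Proposition 3.4.3]{AK}, with $2$ replaced by $p$ throughout.
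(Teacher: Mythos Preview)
Your proof is correct and is precisely the approach the paper indicates: the paper does not give an independent argument but says the proof ``is identical to \cite[Proposition 3.4.3]{AK}'', and what you have written is exactly the adaptation of that proof with $2$ replaced by $p$, yielding the constant $(1+2^{p})^{1/p}$.
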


\begin{thm}\label{3.9}
The James $p$-space $J_{p}$ has property $p^{*}$-($V$).
\end{thm}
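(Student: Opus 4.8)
The plan is to use the characterization of property $p^*$-($V$) given by Theorem \ref{3.1}(2): it suffices to show that every $p^*$-($V$) subset $K$ of $J_p^*$ is relatively weakly compact. Since $J_p$ has a shrinking basis $(e_n)_n$, its dual $J_p^*$ has the basis of coordinate functionals $(e_n^*)_n$, and — this is the key structural fact — $J_p^*$ is weakly sequentially complete (this is classical for James-type spaces; $J_p^{**}/J_p$ is one-dimensional, so $J_p^*$ cannot contain $c_0$, and combined with the unconditional-like structure of the dual basis one gets weak sequential completeness). Granting weak sequential completeness of $J_p^*$, a bounded set $K \subseteq J_p^*$ is relatively weakly compact as soon as it is weakly sequentially precompact, i.e. every sequence in $K$ has a weakly Cauchy subsequence. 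So the real task reduces to: given a sequence $(x_n^*)_n$ in a $p^*$-($V$) set $K$, extract a weakly Cauchy subsequence.

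The mechanism for the extraction should go through Theorem \ref{3.8}: a bounded set $K \subseteq J_p^*$ is a $p^*$-($V$) set precisely when $T^*(K)$ is relatively norm compact for every operator $T \colon \ell_{(p^*)^*} = \ell_p \to J_p$. So first I would assume, for contradiction, that $(x_n^*)_n$ has no weakly Cauchy subsequence. By Rosenthal's $\ell_1$-theorem, passing to a subsequence we may assume $(x_n^*)_n$ is equivalent to the $\ell_1$-basis; in particular the differences $(x_{2n}^* - x_{2n-1}^*)_n$ form a basic sequence equivalent to the $\ell_1$-basis with no weakly null subsequence, and are bounded away from $0$ in norm. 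Now I would use a gliding-hump / perturbation argument against the basis $(e_n^*)_n$ of $J_p^*$: after a further subsequence and a small perturbation, these differences are (equivalent to) a normalized block basic sequence $(u_k)_k$ of $(e_n^*)_n$ in $J_p^*$. The crux is then to show that any normalized block basic sequence in $J_p^*$ dominates the $\ell_p$-basis — i.e. $\|\sum \lambda_k u_k\| \geq c (\sum |\lambda_k|^p)^{1/p}$ — which is the dual counterpart of Lemma \ref{3.90} (Lemma \ref{3.90} gives the upper $\ell_p$-estimate for block bases in $(J_p, \|\cdot\|_{pv})$; dualizing, block bases of the dual basis satisfy a lower $\ell_p$-estimate, so in particular $(u_k)_k$ cannot be equivalent to the $\ell_1$-basis since $p < \infty$) — contradiction. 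This already shows $(x_n^*)_n$ has a weakly Cauchy subsequence, without yet invoking that $K$ is a $p^*$-($V$) set, so in fact one gets that $J_p^*$ has the desired property because bounded sequences in $J_p^*$ have weakly Cauchy subsequences and $J_p^*$ is weakly sequentially complete, i.e. $J_p^*$ is reflexive — which is false.

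Because the naive argument above "proves too much," the correct proof must genuinely use that $K$ is a $p^*$-($V$) set, not merely bounded: the point is that a bounded sequence in $J_p^*$ need not have a weakly Cauchy subsequence (e.g. the partial-sum functionals behave like a summing-type sequence), but a sequence inside a $p^*$-($V$) set does. Concretely, I would argue as follows. Let $(x_n^*)_n \subseteq K$. Using the basis $(e_n^*)_n$ of $J_p^*$, write each $x_n^*$ along the basis; by a standard diagonal argument extract a subsequence so that the "initial-segment" parts converge coordinatewise (hence weakly, since finitely supported vectors span a nice subspace) and the "tail" parts $(y_k)_k$ form, up to perturbation, a normalized block basic sequence — \emph{assuming} the tails do not go to $0$, which is the only case to worry about. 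Now consider the natural operator $T \colon \ell_p \to J_p$ built from the predual blocks dual to $(y_k)_k$ (using Lemma \ref{3.90} to see $T$ is bounded: the block basic sequence of $(e_n)_n$ in $(J_p,\|\cdot\|_{pv})$ supported where $y_k$ lives satisfies the upper $\ell_p$-estimate, so summing against an $\ell_p$ sequence of coefficients converges). Then $T^* x_n^*$ has, as its $k$-th coordinate, essentially $\langle x_n^*, (\text{$k$-th predual block})\rangle$, and the magnitude of $T^* y_k$ along the $k$-th coordinate is bounded below by a constant times $\|y_k\| \geq \delta > 0$. Hence $(T^* x_n^*)_k$ does not converge in norm in $\ell_p$ as $n \to \infty$ — the successive tails keep a fixed-size "bump" sliding off to infinity — so $T^*(K)$ is not relatively norm compact, contradicting Theorem \ref{3.8}. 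Therefore the tails must vanish, $(x_n^*)_n$ has a weakly convergent (in particular weakly Cauchy) subsequence, and by weak sequential completeness of $J_p^*$ the set $K$ is relatively weakly compact.

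The main obstacle, and the part deserving the most care, is the perturbation/block-basis extraction together with the construction of the testing operator $T \colon \ell_p \to J_p$ and the verification — via the dual of Lemma \ref{3.90} — that $T^* $ fails to be compact on $K$ unless the tails of the $x_n^*$ vanish; in particular one must correctly match the two equivalent norms $\|\cdot\|_{pv}$ and $\|\cdot\|_{cpv}$ on $J_p$ (using \eqref{69}) and control the perturbation constants so that the lower bound $\delta$ on the bumps survives. A secondary technical point is to cleanly justify that $J_p^*$ is weakly sequentially complete; if one prefers to avoid quoting it, one can instead argue directly that the coordinate-converging part of the extracted subsequence actually converges weakly in $J_p^*$ by combining the norm convergence of $T^*x_n^*$ (once tails vanish) across all such testing operators with the separability and basis structure of $J_p^*$.
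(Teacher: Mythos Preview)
Your route is genuinely different from the paper's, and considerably more complicated. The paper's proof is short because it exploits the fact you mention only in passing: $J_p^{**}=J_p\oplus\mathrm{span}\{x_0^{**}\}$ with $\langle x_0^{**},e_n^*\rangle=1$ for all $n$. Given a sequence $(x_n^*)$ in a $p^*$-($V$) set $K$, separability of $J_p$ lets one pass to a weak$^*$-convergent subsequence with limit $x_0^*\in J_p^*$. To upgrade weak$^*$ convergence to weak convergence one must only check the \emph{single} extra functional $x_0^{**}$. If $\langle x_0^{**},x_n^*-x_0^*\rangle\not\to 0$, a sliding hump along $(e_k)$ produces indices $k_0<k_1<\cdots$ and blocks $x_j=\sum_{k=k_{j-1}+1}^{k_j}e_k$ with $|\langle x_{n_j}^*-x_0^*,x_j\rangle|>\epsilon_0/2$. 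Lemma~\ref{3.90} (with \eqref{69}) shows these particular normalized blocks are weakly $p^*$-summable in $J_p$, so the $p^*$-($V$) hypothesis forces $|\langle x_{n_j}^*,x_j\rangle|\to 0$; since $\langle x_0^*,x_j\rangle\to 0$ trivially, contradiction. No Rosenthal, no operator $T$, no Theorem~\ref{3.8}, and no appeal to weak sequential completeness of $J_p^*$.

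Your plan instead looks for block structure on the \emph{dual} side and builds a testing operator $T:\ell_p\to J_p$ to contradict Theorem~\ref{3.8}. Two concrete issues. First, your framing is internally inconsistent: the Bessaga--Pe\l czy\'nski dichotomy you set up is ``tails norm-null vs.\ block basic subsequence'', so if you really rule out the block-basic branch via the $p^*$-($V$) property, you have shown the subsequence is \emph{norm} convergent, and the subsequent invocation of weak sequential completeness of $J_p^*$ is redundant; your justification of that completeness (``no $c_0$ plus unconditional-like dual basis'') is in any case not a proof, since the dual basis of $J_p$ is not unconditional. Second, the operator construction is underspecified: ``predual blocks dual to $(y_k)$'' needs to mean normalized $z_k\in\mathrm{span}(e_i:i\in I_k)$ chosen so that $|\langle y_k,z_k\rangle|$ is bounded below, after which Lemma~\ref{3.90} makes $Te_k:=z_k$ bounded $\ell_p\to J_p$; then you must control the off-diagonal terms $\langle x_{n_k}^*,z_j\rangle$ for $j\neq k$ together with the fixed contribution $T^*x_0^*$ to conclude $(T^*x_{n_k}^*)_k$ has no norm-convergent subsequence in $\ell_{p^*}$ (not $\ell_p$). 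All of this can be carried out, but it replaces the paper's one-line codimension-one reduction with a page of estimates; the insight you are missing is that testing weak convergence in $J_p^*$ against $J_p^{**}$ reduces to testing against the one vector $x_0^{**}$, and that vector is exactly what forces the specific blocks $\sum_{k\in I}e_k$ to appear.
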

\begin{proof}
Let $K$ be a $p^{*}$-($V$) subset of $B_{J_{p}^{*}}$. Take any sequence $(x^{*}_{n})_{n}$ from $K$. Since $J_{p}$ is separable, we may assume that $(x^{*}_{n})_{n}$ is $weak^{*}$-convergent to some $x^{*}_{0}\in J^{*}_{p}$.

Claim. $(x^{*}_{n})_{n}$ converges to $x^{*}_{0}$ weakly.

Note that $J^{**}_{p}=J_{p}\oplus span\{x^{**}_{0}\}$, where $x^{**}_{0}\in J^{**}_{p}$ is defined by $<x^{**}_{0},e^{*}_{n}>=1$ for all $n\in \mathbb{N}$, where $(e^{*}_{n})_{n}$ is the functionals biorthogonal to the unit vector basis $(e_{n})_{n}$ of $J_{p}$. Thus it suffices to prove that
$$<x^{**}_{0},x^{*}_{n}>\rightarrow <x^{**}_{0},x^{*}_{0}> \quad (n\rightarrow \infty).$$
Suppose it is false. By passing to subsequences, we may assume that $|<x^{**}_{0},x^{*}_{n}-x^{*}_{0}>|>\epsilon_{0}$ for some $\epsilon_{0}>0$ and for all $n\in \mathbb{N}$. Since $(e_{n})_{n}$ is shrinking, $(e^{*}_{n})_{n}$ forms a basis for $J^{*}_{p}$. Thus
\begin{equation}\label{63}
|\sum_{k=1}^{\infty}<x^{*}_{n}-x^{*}_{0},e_{k}>|=|<x^{**}_{0},\sum_{k=1}^{\infty}<x^{*}_{n}-x^{*}_{0},e_{k}>e^{*}_{k}>|>\epsilon_{0},\quad n=1,2,\cdots
\end{equation}
Note that
\begin{equation}\label{64}
\lim_{n\rightarrow \infty}<x^{*}_{n}-x^{*}_{0},e_{k}>=0, \quad k=1,2,\cdots
\end{equation}
By inductions on $n$ in (\ref{63}) and on $k$ in (\ref{64}), we obtain $1=n_{1}<n_{2}<n_{3}<\cdots$ and $0=k_{0}<k_{1}<k_{2}<k_{3}<\cdots$ such that
\begin{equation}\label{65}
|\sum_{k=k_{j-1}+1}^{k_{j}}<x^{*}_{n_{j}}-x^{*}_{0},e_{k}>|>\frac{\epsilon_{0}}{2},\quad j=1,2,\cdots
\end{equation}
We set $x_{j}=\sum_{k=k_{j-1}+1}^{k_{j}}e_{k}(j=1,2,\cdots)$. Then $(x_{j})_{j}$ is a normalized block basic sequence with respect to $(e_{n})_{n}$ in $(J_{p}, \|\cdot\|_{cpv})$. It follows from (\ref{69}) and Lemma \ref{3.90} that for any sequence of real scalars $(\lambda_{j})_{j=1}^{n}$ the following estimate holds
$$\|\sum_{j=1}^{n}\lambda_{j}x_{j}\|_{cpv}\leq (2+2^{p+1})^{\frac{1}{p}}(\sum_{j=1}^{n}|\lambda_{j}|^{p})^{\frac{1}{p}}$$
This implies that $(x_{j})_{j}$ is weakly $p^{*}$-summable. Since $K$ is a $p^{*}$-($V$) set, we get
$$|<x^{*}_{n_{j}},x_{j}>|\leq \sup_{x^{*}\in K}|<x^{*},x_{j}>|\rightarrow 0 \quad(j\rightarrow \infty).$$
Obviously, $(<x^{*}_{0},x_{j}>)_{j}$ converges to $0$. Therefore, we have
$$|<x^{*}_{n_{j}}-x^{*}_{0},x_{j}>|\rightarrow 0 \quad(j\rightarrow \infty),$$
which contradicts with (\ref{65}). This contradiction shows that $(x^{*}_{n})_{n}$ converges to $x^{*}_{0}$ weakly. Thus $K$ is relatively weakly compact. By Theorem \ref{3.1}, $J_{p}$ has property $p^{*}$-($V$).

\end{proof}

As in \cite{CG}, we consider the space $X_{p}$ constructed in \cite{FGJ}. We do not describe the space $X_{p}$ here and refer the reader to \cite{FGJ} for details. In \cite{FGJ}, a quotient map $T_{p}: X_{p}\rightarrow c_{0}$ is defined and it is proved that $T_{p}$ is unconditionally converging. We extend this result as follows:

\begin{lem}\label{3.200}
For $1<p<\infty$, the quotient map $T_{p}$ is unconditionally $q$-converging for any $1\leq q<p^{*}$.
\end{lem}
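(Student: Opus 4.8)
The plan is to mimic the proof that $T_p$ is unconditionally converging in \cite{FGJ}, but to work with weakly $q$-summable sequences in place of weakly unconditionally Cauchy series. By Theorem \ref{3.2}, it suffices to show that $T_p$ maps every weakly $q$-summable sequence $(x_n)_n$ in $X_p$ to a norm-null sequence in $c_0$. So I would begin by fixing $1 \le q < p^*$ and a weakly $q$-summable sequence $(x_n)_n$ in $X_p$, and then argue by contradiction: if $\|T_p x_n\|_{c_0} \not\to 0$, then after passing to a subsequence I may assume $\|T_p x_n\|_{c_0} \ge \delta > 0$ for all $n$, and, by a standard gliding-hump argument (using that $(x_n)_n$ is weakly null, hence each fixed coordinate functional tends to zero), replace $(x_n)_n$ by a small perturbation of a normalized block basic sequence with respect to the natural basis of $X_p$, while $(T_p x_n)_n$ remains bounded away from $0$.

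The heart of the matter is then the structure of $X_p$ and the quotient map $T_p$ from \cite{FGJ}: the point of that construction is that on block subspaces the norm of $X_p$ behaves, roughly, like an $\ell_p$-type norm, so that a normalized block basic sequence $(x_n)_n$ is equivalent to the unit vector basis of $\ell_p$ (or at least dominates it in the appropriate direction). I would extract from \cite{FGJ} the precise estimate showing that such a block sequence is weakly $p$-summable — or, more to the point, that any sequence in $X_p$ which is weakly $q$-summable for some $q < p^*$ and is (a perturbation of) a block sequence must already be norm-null, because weak $q$-summability with $q < p^*$ is incompatible with being equivalent to the $\ell_p$ basis when the block sequence is normalized. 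Concretely, if $(x_n)_n$ is a normalized block sequence equivalent to the $\ell_p$-basis, then testing against a suitable functional shows $(x_n)_n$ cannot be weakly $q$-summable for $q < p^*$; this contradicts our assumption and forces $\|T_p x_n\|_{c_0} \to 0$.

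The main obstacle will be pinning down exactly which norm estimate on $X_p$ from \cite{FGJ} is needed and verifying it carefully, since the argument depends on the internal combinatorial structure of $X_p$ (the choice of admissible sets and the associated mixed-norm) rather than on any soft property; in particular I must make sure the relevant estimate is uniform enough to survive the passage to a perturbed block sequence. A secondary technical point is handling the perturbation: I need the gliding-hump step to preserve both weak $q$-summability (up to an arbitrarily small error, which is routine since $\ell_q^w(X)$ is closed under small perturbations of this type) and the lower bound $\|T_p x_n\|_{c_0} \ge \delta$, using that $T_p$ is a bounded (quotient) operator. Once these pieces are in place, the contradiction is immediate and Theorem \ref{3.2} completes the proof.
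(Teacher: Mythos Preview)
Your overall strategy---assume a weakly $q$-summable sequence $(x_n)_n$ with $\|T_p x_n\|\ge\delta$, extract an $\ell_p$-like subsequence, and contradict weak $q$-summability for $q<p^*$---is exactly the skeleton of the paper's argument. The contradiction step is fine: an $\ell_p$-basis-equivalent sequence cannot be weakly $q$-summable when $q<p^*$.

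The gap is in the structural input you plan to extract from \cite{FGJ}. It is \emph{not} true (or at least not what is available) that normalized block basic sequences in $X_p$ are equivalent to the $\ell_p$ basis; the space $X_p$ is built precisely so that it has $c_0$ as a quotient, so its block structure is more delicate than a pure $\ell_p$-sum. The result the paper actually uses is \cite[Proposition~2]{GJ}: if $(x_n)_n$ is a sequence in $X_p$ with $\|T_p x_n\|\ge\epsilon_0$, then some subsequence has \emph{differences} $(x_{2n-1}-x_{2n})_n$ equivalent to the unit vector basis of $\ell_p$. Your gliding-hump reduction to a block basis is therefore a detour that does not land on the statement you need; the condition $\|T_p x_n\|\ge\delta$ is essential and is what \cite{GJ} exploits, not any block structure per se.

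The paper also organizes the argument slightly differently, and this is worth noting. Rather than starting from an abstract weakly $q$-summable sequence and perturbing, it uses the operator characterization: if $T_p$ fails to be unconditionally $q$-converging, there is $S:\ell_{q^*}\to X_p$ with $T_pS$ non-compact, hence a weakly null $(z_n)_n$ in $\ell_{q^*}$ (which one may take equivalent to the $\ell_{q^*}$ basis) with $\|T_pSz_n\|\ge\epsilon_0$. Setting $x_n=Sz_n$ and applying \cite[Proposition~2]{GJ} gives $(x_{2k-1}-x_{2k})_k$ equivalent to the $\ell_p$ basis, while $(z_{2k-1}-z_{2k})_k$ is dominated by the $\ell_{q^*}$ basis; comparing through $S$ yields $(\sum|\alpha_k|^p)^{1/p}\lesssim(\sum|\alpha_k|^{q^*})^{1/q^*}$, impossible since $q^*>p$. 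This bypasses your perturbation worries entirely. If you prefer your direct route, it still works once you replace your block-basis claim by \cite[Proposition~2]{GJ}: the differences of a weakly $q$-summable sequence are again weakly $q$-summable, and that already contradicts $\ell_p$-equivalence.
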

\begin{proof}
Suppose that $T_{p}$ is not unconditionally $q$-converging for some $1\leq q<p^{*}$. Then there exists an operator $S$ from  $l_{q^{*}}$ ($c_{0}$ for $q=1$) into $X_{p}$ such that $T_{p}S$ is non-compact. Thus, we can find a weakly null sequence $(z_{n})_{n}$ in $l_{q^{*}}$ and $\epsilon_{0}>0$ such that $\|T_{p}Sz_{n}\|\geq \epsilon_{0}$ for each $n\in \mathbb{N}$.
By passing to subsequences, we may assume that $(z_{n})_{n}$ is equivalent to the unit vector basis of $l_{q^{*}}$, that is, there exist $C_{1}, C_{2}>0$ such that for all $n\in \mathbb{N}$ and all scalars $\alpha_{1},\alpha_{2},...,\alpha_{n}$, one has
\begin{equation}\label{60}
C_{1}(\sum_{k=1}^{n}|\alpha_{k}|^{q^{*}})^{\frac{1}{q^{*}}}\leq \|\sum_{k=1}^{n}\alpha_{k}z_{k}\|\leq C_{2}(\sum_{k=1}^{n}|\alpha_{k}|^{q^{*}})^{\frac{1}{q^{*}}}.
\end{equation}
Let $x_{n}=Sz_{n}$. By \cite[Proposition 2]{GJ}, the sequence $(x_{n})_{n}$ admits a subsequence, which is still denoted by
$(x_{n})_{n}$, such that $(x_{2n-1}-x_{2n})_{n}$ is equivalent to the unit vector basis of $l_{p}$. Then, there exist $D_{1}, D_{2}>0$ such that for all $n\in \mathbb{N}$ and all scalars $\alpha_{1},\alpha_{2},...,\alpha_{n}$, one has
\begin{equation}\label{61}
D_{1}(\sum_{k=1}^{n}|\alpha_{k}|^{p})^{\frac{1}{p}}\leq \|\sum_{k=1}^{n}\alpha_{k}(x_{2k-1}-x_{2k})\|\leq D_{2}(\sum_{k=1}^{n}|\alpha_{k}|^{p})^{\frac{1}{p}}.
\end{equation}
By (\ref{60}) and (\ref{61}), we get, for each $n$ and scalars $\alpha_{1},\alpha_{2},...,\alpha_{n}$,
\begin{align*}
D_{1}(\sum_{k=1}^{n}|\alpha_{k}|^{p})^{\frac{1}{p}}&\leq \|\sum_{k=1}^{n}\alpha_{k}(x_{2k-1}-x_{2k})\|\\
&\leq \|S\|\cdot \|\sum_{k=1}^{n}\alpha_{k}(z_{2k-1}-z_{2k})\|\\
&\leq \|S\|\cdot C_{2}\cdot 2^{\frac{1}{q^{*}}}(\sum_{k=1}^{n}|\alpha_{k}|^{q^{*}})^{\frac{1}{q^{*}}},
\end{align*}
which is impossible because $1\leq q<p^{*}$. This completes the proof.

\end{proof}

\begin{thm}
Property $q$-($V$) is not a three-space property for each $1\leq q<\infty$.
\end{thm}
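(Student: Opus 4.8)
The plan is to use the very same example as in \cite{CG}, the only new ingredient being Lemma \ref{3.200}. Fix $1\le q<\infty$ and choose $1<p<\infty$ with $p^{*}>q$; for instance $p=\frac{q+1}{q}$ works, since then $p>1$ and $p^{*}=q+1>q$ (when $q=1$ any $1<p<\infty$ will do, and one simply recovers the result of \cite{CG}). Consider the space $X_{p}$ of \cite{FGJ}, the quotient map $T_{p}\colon X_{p}\to c_{0}$, and the subspace $X_{0}=\ker T_{p}$.

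First I would record that both the quotient $X_{p}/X_{0}$, which is isomorphic to $c_{0}$, and the subspace $X_{0}$ enjoy Pe{\l}czy\'{n}ski's property ($V$): for $c_{0}$ this is the classical result of \cite{P}, and for $X_{0}$ it is proved in \cite{CG}. Since property $1$-($V$) implies property $q$-($V$) for every $q\ge 1$, it follows that $X_{0}$ and $X_{p}/X_{0}$ both have property $q$-($V$).

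It remains to see that $X_{p}$ itself fails property $q$-($V$). By Lemma \ref{3.200} (this is exactly where $q<p^{*}$ is used) the operator $T_{p}$ is unconditionally $q$-converging; on the other hand $T_{p}$ is a surjection onto the non-reflexive space $c_{0}$, so $T_{p}B_{X_{p}}$ contains a positive multiple of $B_{c_{0}}$ and hence is not relatively weakly compact, i.e. $T_{p}$ is not weakly compact. Thus $X_{p}$ is a Banach space failing property $q$-($V$) which admits a closed subspace $X_{0}$ with both $X_{0}$ and $X_{p}/X_{0}$ having property $q$-($V$), and this is precisely the assertion.

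The only point not self-contained here is property ($V$) of $X_{0}=\ker T_{p}$, which I would simply quote from \cite{CG} (where it is established from the description of $X_{p}$ given in \cite{FGJ}); apart from that, the argument is just Lemma \ref{3.200} combined with the monotonicity of property $p$-($V$) in $p$, so no serious obstacle is expected.
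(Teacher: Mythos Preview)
Your proof is correct and essentially identical to the paper's own argument: both choose $p$ with $p^{*}>q$, invoke \cite{CG} for property ($V$) of $\ker T_{p}$ and $X_{p}/\ker T_{p}$, use the monotonicity of property $p$-($V$), and then apply Lemma \ref{3.200} together with the non-weak-compactness of $T_{p}$ to show $X_{p}$ fails property $q$-($V$). The only differences are cosmetic---you give an explicit choice of $p$ and spell out why a surjection onto $c_{0}$ is not weakly compact---so nothing needs to change.
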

\begin{proof}
For $1\leq q<\infty$, choose $1<p<\infty$ with $q<p^{*}$. It is shown in \cite{CG} that both $X_{p}/Ker(T_{p})$ and $Ker(T_{p})$ have property $1$-($V$) and hence have property $q$-($V$). But $X_{p}$ fails property $q$-($V$) since, by Lemma \ref{3.200}, $T_{p}$ is unconditionally $q$-converging, but obviously not weakly compact.

\end{proof}

\begin{defn}
Let $X$ be a Banach space and $1\leq p\leq \infty$. We say that a bounded subset $K$ of $X$ is \textit{a $p$-($V^{*}$) set} if $$\lim_{n\rightarrow \infty}\sup_{x\in K}|<x^{*}_{n},x>|=0,$$ for every $(x^{*}_{n})_{n}\in l^{w}_{p}(X^{*})$ ($(x^{*}_{n})_{n}\in c^{w}_{0}(X^{*})$ for $p=\infty$).
\end{defn}
It is noted that $1$-($V^{*}$) sets are ($V^{*}$)-sets (see \cite{P}) and $\infty$-($V^{*}$) sets are Dunford-Pettis sets.

\begin{thm}\label{3.4}
Let $K$ be a bounded subset of a Banach space $X$ and $1<p<\infty$. The following statements are equivalent:
\item[(1)]$K$ is a $p$-($V^{*}$) set;
\item[(2)]For all spaces $Y$ with $Y^{*}\in W_{p}$, every operator $T: X\rightarrow Y$ maps $K$ onto a relatively norm compact subset of $Y$;
\item[(3)]Every operator $T: X\rightarrow l_{p}$ maps $K$ onto a relatively norm compact subset of $l_{p}$.
\end{thm}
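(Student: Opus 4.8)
The plan is to prove the cycle of implications $(1)\Rightarrow(2)\Rightarrow(3)\Rightarrow(1)$, in close parallel with the proof of Theorem~\ref{3.8}. For $(2)\Rightarrow(3)$ one simply applies (2) to the reflexive space $Y=l_{p}$, so the only point to verify is that $l_{p}^{*}=l_{p^{*}}$ belongs to $W_{p}$. Since $1<p^{*}<\infty$, $l_{p^{*}}$ is reflexive, so every bounded sequence has a weakly convergent subsequence; subtracting the weak limit one may assume the sequence is weakly null, and then it has a subsequence which is either summable in norm (hence automatically weakly $p$-summable) or, by the Bessaga--Pe{\l}czy\'{n}ski gliding-hump argument, a small perturbation of a normalized block basis $(u_{k})_{k}$ of the unit vector basis of $l_{p^{*}}$. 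For such a block basis, disjointness of supports together with H\"{o}lder's inequality gives $\big(\sum_{k}|\langle x^{*},u_{k}\rangle|^{p}\big)^{1/p}\le\|x^{*}\|_{p}$ for every $x^{*}\in l_{p}$, so $(u_{k})_{k}$ is weakly $p$-summable and hence so is the original subsequence. Thus $l_{p^{*}}\in W_{p}$, and $(2)\Rightarrow(3)$ follows.

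For $(3)\Rightarrow(1)$ I would argue by contraposition. If $K$ is not a $p$-($V^{*}$) set, then there are $(x^{*}_{n})_{n}\in l^{w}_{p}(X^{*})$, $\epsilon_{0}>0$ and --- after passing to a subsequence --- points $z_{n}\in K$ with $|\langle x^{*}_{n},z_{n}\rangle|>\epsilon_{0}$ for every $n$. The formula $Tx=(\langle x^{*}_{n},x\rangle)_{n}$ defines a bounded operator $T:X\to l_{p}$, boundedness being exactly the weak $p$-summability of $(x^{*}_{n})_{n}$ evaluated on $B_{X}\subset B_{X^{**}}$. If some subsequence $(Tz_{n_{j}})_{j}$ converged in norm to $y\in l_{p}$, then, since the coordinates of $y$ tend to $0$ while $\|\cdot\|_{\infty}\le\|\cdot\|_{p}$ on $l_{p}$, the $n_{j}$-th coordinate of $Tz_{n_{j}}$ --- which equals $\langle x^{*}_{n_{j}},z_{n_{j}}\rangle$ and has modulus $>\epsilon_{0}$ --- would eventually have modulus $<\epsilon_{0}$, a contradiction. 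Hence $T(K)$ is not relatively norm compact and (3) fails.

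The heart of the matter is $(1)\Rightarrow(2)$. Assume $Y^{*}\in W_{p}$; then $Y^{*}$, and hence $Y$, is reflexive. Let $T:X\to Y$ be an operator and suppose, towards a contradiction, that $T(K)$ is not relatively norm compact for some $p$-($V^{*}$) set $K$. Then there are $\delta>0$ and $y_{n}\in K$ with $\|Ty_{n}-Ty_{m}\|>\delta$ whenever $n\ne m$; by reflexivity of $Y$ we may pass to a subsequence along which $(Ty_{n})_{n}$ converges weakly, so that $w_{n}:=Ty_{2n}-Ty_{2n-1}$ satisfies $\|w_{n}\|>\delta$ and $w_{n}\to 0$ weakly. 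Choose $g_{n}\in B_{Y^{*}}$ with $\langle g_{n},w_{n}\rangle=\|w_{n}\|>\delta$ and, using $Y^{*}\in W_{p}$, pass to a subsequence $(g_{n_{j}})_{j}$ which is weakly $p$-convergent to some $g_{0}\in Y^{*}$; since bounded operators preserve weak $p$-summability, $(T^{*}(g_{n_{j}}-g_{0}))_{j}$ is weakly $p$-summable in $X^{*}$. Using $y_{2n_{j}},y_{2n_{j}-1}\in K$, the identity
\[
\langle g_{n_{j}},w_{n_{j}}\rangle=\langle T^{*}(g_{n_{j}}-g_{0}),y_{2n_{j}}\rangle-\langle T^{*}(g_{n_{j}}-g_{0}),y_{2n_{j}-1}\rangle+\langle g_{0},w_{n_{j}}\rangle,
\]
the fact that $K$ is a $p$-($V^{*}$) set (which kills the first two terms), and $\langle g_{0},w_{n_{j}}\rangle\to 0$ (because $w_{n}\to 0$ weakly), we obtain $\delta\le\limsup_{j}\langle g_{n_{j}},w_{n_{j}}\rangle=0$, a contradiction. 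Hence $T(K)$ is relatively norm compact.

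I expect the one genuinely delicate point to be the cross term $\langle g_{0},w_{n_{j}}\rangle$ in $(1)\Rightarrow(2)$: making it vanish forces one to arrange that $(w_{n})_{n}$ is weakly null, which is precisely why the hypothesis must read $Y^{*}\in W_{p}$ --- so that $Y$ is reflexive --- rather than merely $Y\in W_{p}$. The remaining ingredients (that $W_{p}$ yields weakly $p$-convergent subsequences, that bounded operators preserve weak $p$-summability, and the verification $l_{p^{*}}\in W_{p}$) are routine.
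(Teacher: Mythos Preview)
Your proof is correct and follows essentially the same approach as the paper: the same cycle $(1)\Rightarrow(2)\Rightarrow(3)\Rightarrow(1)$, the same use of reflexivity of $Y$ and $Y^{*}\in W_{p}$ in $(1)\Rightarrow(2)$, and the same operator $Tx=(\langle x^{*}_{n},x\rangle)_{n}$ into $l_{p}$ for $(3)\Rightarrow(1)$. The only cosmetic differences are that in $(1)\Rightarrow(2)$ you manufacture a weakly null sequence via pairwise differences $Ty_{2n}-Ty_{2n-1}$ whereas the paper subtracts the weak limit $y$ directly, and in $(3)\Rightarrow(1)$ you argue by contraposition and a diagonal coordinate argument whereas the paper invokes the tail characterization of relatively compact subsets of $l_{p}$; both variants are equally valid.
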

\begin{proof}
$(1)\Rightarrow (2)$. Let $Y$ and $T$ be as stated in (2). Assume the contrary that $T(K)$ is not relatively norm compact.
Then there exists a sequence $(x_{n})_{n}$ in $K$ such that $(Tx_{n})_{n}$ admits no norm convergent subsequences.
Since $Y$ is reflexive, by passing to a subsequence if necessary we may assume that $(Tx_{n})_{n}$ converges weakly to some $y\in Y$ and $\|Tx_{n}-y\|>\epsilon_{0}$ for some $\epsilon_{0}>0$ and for all $n\in \mathbb{N}$. For each $n\in \mathbb{N}$, choose $y^{*}_{n}$ with $\|y^{*}_{n}\|\leq 1$ such that $|<y^{*}_{n},Tx_{n}-y>|>\epsilon_{0}$. Since $Y^{*}\in W_{p}$, by passing to a subsequence again one can assume that the sequence $(y^{*}_{n})_{n}$ is weakly $p$-convergent to some $y^{*}\in Y^{*}$. By (1), we get $$\lim_{n\rightarrow \infty}\sup_{x\in K}|<T^{*}y^{*}_{n}-T^{*}y^{*},x>|=0.$$ For each $n\in \mathbb{N}$, we have
\begin{align*}
\epsilon_{0}&<|<y^{*}_{n},Tx_{n}-y>|\\
&\leq |<y^{*}_{n},Tx_{n}>-<y^{*},Tx_{n}>|+|<y^{*},Tx_{n}>-<y^{*},y>|\\
&  \qquad +|<y^{*},y>-<y^{*}_{n},y>|\\
&\leq \sup_{x\in K}|<T^{*}y^{*}_{n}-T^{*}y^{*},x>|+|<y^{*},Tx_{n}-y>|\\
&  \qquad +|<y^{*}-y^{*}_{n},y>|\rightarrow 0 \quad (n\rightarrow \infty),\\
\end{align*}
which is a contradiction.

$(2)\Rightarrow (3)$ is immediate because $l_{p^{*}}\in W_{p}$;

$(3)\Rightarrow (1)$. Let $(x^{*}_{n})_{n}\in l^{w}_{p}(X^{*})$. Then there exists an operator $T$ from $X$ into $l_{p}$ such that $Tx=(<x^{*}_{n},x>)_{n}$ for all $x\in X$. It follows from (3) that $T(K)$ is relatively norm compact. By the well-known characterization of relatively norm compact subsets of $l_{p}$, one can derive that
$\lim_{n\rightarrow \infty}\sup_{x\in K}|<x^{*}_{n},x>|=0$. This finishes the proof.

\end{proof}

It should be mentioned that G. Emmanuele proved the equivalence between (1) and (3) of Theorem \ref{3.4} for $p=1$ (see \cite{E}). Obviously, this is false for $p=\infty$, for example, take $X=c_{0}$. But, K. T. Andrews proved that a bounded subset $K$ of a Banach space $X$ is a $\infty$-($V^{*}$) set if and only if every weakly compact operator $T: X\rightarrow c_{0}$ maps $K$ onto a relatively norm compact subset (see \cite{A}).

Let $X$ be a Banach space and $A$ be a bounded subset of $X$. For $1\leq p\leq\infty$, we set
\begin{center}
$\theta_{p}(A)=\inf\{\widehat{d}(A,K): K\subset X$ is a $p$-($V^{*}$) set $\}.$
\end{center}
Obviously, $\theta_{p}(A)=0$ if and only if $A$ is a $p$-($V^{*}$) set.

\begin{defn}
Let $1\leq p\leq \infty$. We say that a Banach space $X$ has \textit{Pe{\l}czy\'{n}ski's property ($V^{*}$) of order $p$} ($p$-($V^{*}$) in short) if every $p$-($V^{*}$) subset of $X$ is relatively weakly compact.
\end{defn}
It is clear that for every $1\leq p<q\leq \infty$, a Banach space $X$ has property $q$-($V^{*}$) whenever $X$ has property $p$-($V^{*}$).

The proof of the following lemma is similar to \cite[Proposition 5]{A1}.
\begin{lem}\label{3.45}
Let $(x^{*}_{n})_{n}=(\sum_{i=k_{n-1}+1}^{k_{n}}a_{i}e^{*}_{i})_{n}$ be a semi-normalized block basic sequence with respect to $(e^{*}_{n})_{n}$ in $J^{*}_{p}$ and suppose that $\sum_{i=k_{n-1}+1}^{k_{n}}a_{i}=0$ for each $n\in \mathbb{N}$. Then $(x^{*}_{n})_{n}$ is equivalent to the unit vector basis of $l_{p^{*}}$.
\end{lem}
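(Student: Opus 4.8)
\textbf{Proof proposal for Lemma \ref{3.45}.}

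The plan is to establish the two-sided estimate
\[
c\Bigl(\sum_{n}|\lambda_{n}|^{p^{*}}\Bigr)^{1/p^{*}}\le \Bigl\|\sum_{n}\lambda_{n}x^{*}_{n}\Bigr\|_{J^{*}_{p}}\le C\Bigl(\sum_{n}|\lambda_{n}|^{p^{*}}\Bigr)^{1/p^{*}}
\]
for all finitely supported scalar sequences $(\lambda_{n})_{n}$, with constants depending only on $p$ and on the semi-normalization bounds of $(x^{*}_{n})_{n}$. For the upper estimate I would dualize: each $y\in J_{p}$ of norm one has a block-type decomposition adapted to the partition $(k_{n-1},k_{n}]$, and the mean-zero hypothesis $\sum_{i=k_{n-1}+1}^{k_{n}}a_{i}=0$ lets me replace, on each block, the coordinates of $y$ by $y$ minus its average over that block without changing $\langle x^{*}_{n},y\rangle$. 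Since the $p$-variation norm controls the $\ell_{p}$-norm of consecutive differences, the oscillation of $y$ within the $n$-th block is controlled in $\ell_{p}$ across $n$; pairing against $\sum_{n}\lambda_{n}x^{*}_{n}$ and applying Hölder with exponents $p,p^{*}$ then yields the upper bound. This is the step I expect to be the main obstacle, because the James $p$-variation norm is not a lattice norm and one must be careful that the block-wise ``centering'' of $y$ and the recombination into a single admissible increasing sequence of indices does not inflate the norm beyond a universal constant; Lemma \ref{3.90} (applied in $J_{p}$ rather than its dual) together with (\ref{69}) should provide exactly the bookkeeping needed, after passing through the duality $J^{*}_{p}$.

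For the lower estimate I would test $\sum_{n}\lambda_{n}x^{*}_{n}$ against suitable normalized vectors in $J_{p}$. Fix $n$; since $x^{*}_{n}$ is bounded below in norm, there is a norm-one $y_{n}\in J_{p}$, supported in the block $(k_{n-1},k_{n}]$, with $\langle x^{*}_{n},y_{n}\rangle\ge \delta>0$. Because these $y_{n}$ have pairwise disjoint, consecutively arranged supports, for any signs and any scalars $(\mu_{n})$ the vector $\sum_{n}\mu_{n}y_{n}$ is a block basic sequence in $(J_{p},\|\cdot\|_{pv})$, so by the standard lower $\ell_{p}$-estimate for such block sequences in $J_{p}$ (the companion to Lemma \ref{3.90}) one has $\|\sum_{n}\mu_{n}y_{n}\|_{pv}\le C'(\sum_{n}|\mu_{n}|^{p})^{1/p}$. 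Choosing $\mu_{n}=|\lambda_{n}|^{p^{*}-1}\operatorname{sgn}\lambda_{n}$ and pairing gives
\[
\delta\sum_{n}|\lambda_{n}|^{p^{*}}\le \Bigl\langle \sum_{n}\lambda_{n}x^{*}_{n},\sum_{n}\mu_{n}y_{n}\Bigr\rangle\le \Bigl\|\sum_{n}\lambda_{n}x^{*}_{n}\Bigr\|\cdot C'\Bigl(\sum_{n}|\lambda_{n}|^{p^{*}}\Bigr)^{1/p},
\]
and dividing yields the desired lower bound with constant $\delta/C'$.

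Finally I would note that both estimates are uniform in the number of terms, so $(x^{*}_{n})_{n}$ is indeed equivalent to the unit vector basis of $\ell_{p^{*}}$; the mean-zero condition is used only in the upper estimate (it is what makes each block functional annihilate constants and hence ``see'' only the oscillation of the test vector), while the lower estimate uses merely that the blocks are consecutive and the functionals semi-normalized.
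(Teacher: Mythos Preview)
The paper does not supply a proof of this lemma; it only says that the argument is similar to \cite[Proposition~5]{A1} (Andrew's result for the classical case $p=2$). Your outline is a correct realization of that argument, generalized to arbitrary $1<p<\infty$, so you have essentially reconstructed what the paper omits.

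Two minor corrections are worth making. In your lower-estimate paragraph you write ``the standard lower $\ell_{p}$-estimate'' but the displayed inequality you use is the \emph{upper} $\ell_{p}$-estimate for normalized block basic sequences in $J_{p}$, which is precisely Lemma~\ref{3.90}; the duality computation that follows is then correct as written. For the upper estimate, however, Lemma~\ref{3.90} is not the right tool: that lemma bounds norms of linear combinations of block vectors in $J_{p}$, whereas what you actually need is the block-superadditivity of the $p$-variation seminorm, namely that if $V_{n}(y)$ denotes the $p$-variation of $y$ restricted to $(k_{n-1},k_{n}]$ then $\sum_{n}V_{n}(y)^{p}\le\|y\|_{pv}^{p}$. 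This is immediate from the definition of $\|\cdot\|_{pv}$ by concatenating near-optimal partitions on the successive blocks and dropping the (nonnegative) cross-block increments. With this in hand, your centering trick (subtract any constant on block $n$, for instance $y(k_{n})$) gives $|\langle x^{*}_{n},y\rangle|\le C\,\|x^{*}_{n}\|\,V_{n}(y)$, and H\"older with exponents $p,p^{*}$ yields the upper bound exactly as you describe.
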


\begin{thm}\label{3.22}
The James $p$-space $J_{p}$ has property $p$-($V^{*}$).
\end{thm}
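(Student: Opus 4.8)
The plan is to show that every $p$-($V^{*}$) subset $K$ of $J_{p}$ is relatively weakly compact. Since $J_{p}$ is separable and $J_{p}^{**}/J_{p}$ is one-dimensional (spanned by $x^{**}_{0}$ with $\langle x^{**}_{0},e^{*}_{n}\rangle = 1$ for all $n$), a bounded sequence $(x_{n})_{n}$ in $K$ fails to have a weakly convergent subsequence precisely when, after passing to a $w^{*}$-convergent (in $J_{p}^{**}$) subsequence, the $w^{*}$-limit has a nonzero $x^{**}_{0}$-component; equivalently, writing $x_n = \sum_k a^{(n)}_k e_k$, the ``total mass'' $\sum_k a^{(n)}_k$ (which makes sense since coordinates of elements of $J_p$ tend to $0$ and, more importantly, $(e^{*}_n)_n$ is a basis of $J_p^{*}$ because $(e_n)$ is shrinking) does not converge to the mass of the limit. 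So I would argue by contradiction: assume $K$ is a $p$-($V^{*}$) set that is not relatively weakly compact, extract a sequence $(x_{n})_{n}\subset K$ with no weakly convergent subsequence, and aim to build a weakly $p$-summable sequence in $J_{p}^{*}$ that is not ``annihilated'' uniformly on $K$, contradicting the definition of a $p$-($V^{*}$) set.

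The key construction mirrors the proof of Theorem~\ref{3.9} but dualized. First I would pass to a subsequence so that $(x_n)_n$ is $w^{*}$-convergent in $J_p^{**}$ to some $x^{**}$, and so that $\langle x^{**}_0, x_n - x\rangle$ stays bounded away from $0$ for the appropriate ``$x$''; by replacing $x_n$ with $x_{2n-1}-x_{2n}$ if necessary (these differences lie in $2B_{J_p}$ and the $x^{**}_0$-gaps survive), I may assume the limit component is $0$ in $J_p$ but the mass-defect persists: there is $\epsilon_0>0$ with $|\sum_k a^{(n)}_k| > \epsilon_0$ for all $n$, while $\lim_n a^{(n)}_k = 0$ for each fixed $k$. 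A gliding-hump / diagonal argument (induction alternately on $n$ using the mass bound and on $k$ using coordinatewise convergence, exactly as in (\ref{63})--(\ref{65})) then produces $n_1<n_2<\cdots$ and blocks $0=k_0<k_1<k_2<\cdots$ such that the block functionals $x^{*}_{j} = \sum_{k=k_{j-1}+1}^{k_{j}} \langle x^{*}_j, \cdot\rangle$... more precisely, I set $x^{*}_j \in J_p^{*}$ to be the functional $x\mapsto \langle x, x_{n_j}\rangle$ restricted (via the block $(e^{*}_k)_{k_{j-1}<k\le k_j}$) so that $x^{*}_j$ is a semi-normalized block of $(e^{*}_n)_n$. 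The point of choosing blocks where the ``interior differences'' dominate is to arrange that each block functional $x^{*}_j$ has coefficient-sum equal to $0$ (replacing $x^{*}_j$ by $x^{*}_j$ minus a correcting multiple of the first coordinate, which costs a vanishing amount), so that Lemma~\ref{3.45} applies and $(x^{*}_j)_j$ is equivalent to the unit vector basis of $l_{p^{*}}$, hence weakly $p$-summable in $J_p^{*}$.

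With $(x^{*}_j)_j$ weakly $p$-summable and $(x_{n_j})_j \subset K$, the $p$-($V^{*}$) property forces $\sup_{x\in K}|\langle x^{*}_j, x\rangle| \to 0$, in particular $\langle x^{*}_j, x_{n_j}\rangle \to 0$. But by construction $|\langle x^{*}_j, x_{n_j}\rangle|$ is essentially $|\sum_{k=k_{j-1}+1}^{k_j} a^{(n_j)}_k|$ (up to the vanishing correction term), which exceeds $\epsilon_0/2$ for all $j$ --- a contradiction. Therefore $K$ is relatively weakly compact, and $J_p$ has property $p$-($V^{*}$).

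The main obstacle I anticipate is the interface between the ``mass defect'' of the vectors $x_n$ (a statement about $\langle x^{**}_0, x_n\rangle$, i.e. about $\sum_k a^{(n)}_k$) and the requirement in Lemma~\ref{3.45} that each block \emph{functional} have vanishing coefficient sum: one must verify that the block functionals built from the gliding hump can be modified to kill their coefficient sums while (i) remaining semi-normalized and (ii) changing the pairings $\langle x^{*}_j, x_{n_j}\rangle$ only negligibly, and simultaneously that $|\langle x^{*}_j, x_{n_j}\rangle|$ really is bounded below --- this last point needs care because $\langle x^{*}_j, x_{n_j}\rangle$ is a \emph{partial} sum $\sum_{k_{j-1}<k\le k_j} a^{(n_j)}_k$, and one has to chain the inductive choices in (\ref{63})--(\ref{65}) so that these partial sums, not just the full sum, stay away from $0$. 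Checking these bookkeeping details, together with confirming that $(x^{*}_j)_j$ lands in $J_p^{*}$ with the right block structure relative to $(e^{*}_n)_n$, is where the real work lies; everything else is the standard James-space reflexivity-codimension-one machinery already used in Theorem~\ref{3.9}.
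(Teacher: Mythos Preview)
Your proposal has a genuine conceptual gap. The quantity you call the ``total mass'' of $x_n=(a^{(n)}_k)_k\in J_p$, namely $\sum_k a^{(n)}_k$, is in general \emph{undefined}: for instance $(1/k)_k\in J_p$ (it is monotone, so its $p$-variation norm is just $|a_1|$) but $\sum_k 1/k=\infty$. The pairing $\langle x^{**}_0,x_n\rangle$ you invoke is ill-posed, since $x^{**}_0\in J_p^{**}$ acts on $J_p^{*}$, not on $J_p$; it is for $x^{*}\in J_p^{*}$ that $\langle x^{**}_0,x^{*}\rangle=\sum_k\langle x^{*},e_k\rangle$ makes sense, which is why that object appears naturally in the proof of Theorem~\ref{3.9} but cannot be transplanted here by a straight dualization. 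Relatedly, your replacement $x_n\mapsto x_{2n-1}-x_{2n}$ destroys the very obstruction you need: if $(x_n)$ is weak$^{*}$-convergent in $J_p^{**}$ to some $x^{**}\notin J_p$, then $(x_{2n-1}-x_{2n})$ converges weak$^{*}$ to $0\in J_p$, hence weakly in $J_p$, and there is no remaining ``defect'' to exploit. Finally, your description of the block functionals (``the functional $x\mapsto\langle x,x_{n_j}\rangle$ restricted\ldots'') pairs two elements of $J_p$ and does not define anything in $J_p^{*}$.

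The paper's argument avoids all of this by working with individual coordinates rather than total mass. One uses only that $x^{**}\notin J_p$ means $\xi:=\lim_k\langle x^{**},e^{*}_k\rangle\neq 0$, so $|\langle x^{**},e^{*}_k\rangle|>\delta:=|\xi|/2$ for all large $k$; combining this with the coordinatewise convergence $x_n\to x^{**}$ and with the fact that each $x_n\in J_p$ has coordinates tending to $0$, one selects indices $p_j<q_j<p_{j+1}$ and $n_j$ with $|\langle e^{*}_{p_j},x_{n_j}\rangle|>\delta$ and $|\langle e^{*}_{q_j},x_{n_j}\rangle|<\delta/2$. The two-term functionals $z^{*}_j:=e^{*}_{p_j}-e^{*}_{q_j}$ are then semi-normalized blocks of $(e^{*}_n)$ whose coefficient sum is $1-1=0$ \emph{by construction}, so Lemma~\ref{3.45} applies with no correction step, and $|\langle z^{*}_j,x_{n_j}\rangle|>\delta/2$ yields the contradiction. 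The obstacle you correctly flagged---forcing the block functionals to have vanishing coefficient sum while keeping the pairings bounded below---simply does not arise in this cleaner construction.
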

\begin{proof}
Let $K$ be a $p$-($V^{*}$) subset of $B_{J_{p}}$. Take any sequence $(x_{n})_{n}$ from $K$. Since $J^{*}_{p}$ is separable, we may assume that $(x_{n})_{n}$ is $weak^{*}$-convergent to some $x^{**}\in B_{J^{**}_{p}}$. It aims to prove that $x^{**}\in J_{p}$, that is, $\lim_{k\rightarrow \infty}<x^{**},e^{*}_{k}>=\xi=0$.

Suppose that $\xi\neq 0$. Let $\delta=\frac{|\xi|}{2}>0$. Then there exists $p_{1}\in \mathbb{N}$ such that
$|<x^{**},e^{*}_{k}>|>\delta$ for all $k\geq p_{1}$. Since $(x_{n})_{n}$ is $weak^{*}$-convergent to $x^{**}$, we choose $n_{1}$ such that
$|<e^{*}_{p_{1}},x_{n_{1}}>|>\delta$. Choose $q_{1}>p_{1}$ such that $|<e^{*}_{k},x_{n_{1}}>|<\frac{\delta}{2}$ for each $k\geq q_{1}$. In particular, $|<e^{*}_{q_{1}},x_{n_{1}}>|<\frac{\delta}{2}$. Choose any $p_{2}>q_{1}$. Then there exists $n_{2}>n_{1}$ such that $|<e^{*}_{p_{2}},x_{n_{2}}>|>\delta$. Choose $q_{2}>p_{2}$ such that $|<e^{*}_{k},x_{n_{2}}>|<\frac{\delta}{2}$ for each $k\geq q_{2}$. In particular, $|<e^{*}_{q_{2}},x_{n_{2}}>|<\frac{\delta}{2}$. We continue in a similar manner and obtain
\begin{center}
$p_{1}<q_{1}<p_{2}<q_{2}<\cdots$ and $n_{1}<n_{2}<\cdots$
\end{center}
such that
\begin{center}
$|<e^{*}_{p_{j}},x_{n_{j}}>|>\delta$ and $|<e^{*}_{q_{j}},x_{n_{j}}>|<\frac{\delta}{2}, \quad j=1,2,\cdots$
\end{center}
Set $z^{*}_{j}=e^{*}_{p_{j}}-e^{*}_{q_{j}}(j=1,2,\cdots)$. Then, for each $j\in \mathbb{N}$, we have
\begin{align*}
|<z^{*}_{j},x_{n_{j}}>|&=|<e^{*}_{p_{j}},x_{n_{j}}>-<e^{*}_{q_{j}},x_{n_{j}}>|\\
&\geq |<e^{*}_{p_{j}},x_{n_{j}}>|-|<e^{*}_{q_{j}},x_{n_{j}}>|\\
&>\delta-\frac{\delta}{2}=\frac{\delta}{2}.\\
\end{align*}
Thus $(z^{*}_{j})_{j}$ is a semi-normalized block basic sequence of $(e^{*}_{n})_{n}$. It follows from Lemma \ref{3.45} that $(z^{*}_{j})_{j}$ is equivalent to the unit vector basis of $l_{p^{*}}$. In particular, $(z^{*}_{j})_{j}$ is weakly $p$-summable. Since $K$ is a $p$-($V^{*}$) set, we get
$$\frac{\delta}{2}<|<z^{*}_{j},x_{n_{j}}>|\leq \sup_{x\in K}|<z^{*}_{j},x>|\rightarrow 0 \quad (j\rightarrow \infty),$$
which is a contradiction.

\end{proof}

The proof of the following theorem is similar to Theorem \ref{3.1}.

\begin{thm}
Let $1\leq p\leq\infty$ and $X$ be a Banach space. The following are equivalent:
\item[(1)]$X$ has property $p$-($V^{*}$);
\item[(2)]For all spaces $Y$, an operator $T: Y\rightarrow X$ is weakly compact whenever $T^{*}$ is unconditionally $p$-converging;
\item[(3)]$\omega(T)\leq \theta_{p}(T)$ for every operator $T$ from any Banach space $Y$ into $X$;
\item[(4)]$\omega(A)\leq \theta_{p}(A)$ for every bounded subset $A$ of $X$.
\end{thm}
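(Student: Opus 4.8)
The plan is to dualize, essentially verbatim, the proof of Theorem~\ref{3.1}: I will run the cycle $(1)\Rightarrow(4)\Rightarrow(3)\Rightarrow(2)\Rightarrow(1)$ and invoke Theorem~\ref{3.2} whenever I need to translate between an operator being unconditionally $p$-converging and its sending weakly $p$-summable sequences (weakly null sequences when $p=\infty$) to norm-null sequences. For an operator $T:Y\rightarrow X$ I read $\theta_{p}(T)$ and $\omega(T)$ as $\theta_{p}(TB_{Y})$ and $\omega(TB_{Y})$, matching the convention fixed above for $\xi_{p}$.

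For $(1)\Rightarrow(4)$ I argue as for the equivalence $(2)\Leftrightarrow(4)$ in Theorem~\ref{3.1}: given a bounded $A\subseteq X$ and $\varepsilon>0$, choose a $p$-($V^{*}$) set $K\subseteq X$ with $\widehat{d}(A,K)<\theta_{p}(A)+\varepsilon$. By (1), $K$ is relatively weakly compact, so its weak closure $\overline{K}^{w}$ is weakly compact; since $K\subseteq\overline{K}^{w}$ we have $\omega(A)\leq\widehat{d}(A,\overline{K}^{w})\leq\widehat{d}(A,K)<\theta_{p}(A)+\varepsilon$, and letting $\varepsilon\rightarrow0$ gives $\omega(A)\leq\theta_{p}(A)$. (Conversely $(4)\Rightarrow(1)$ is immediate, since $\theta_{p}(A)=0$ exactly when $A$ is a $p$-($V^{*}$) set.) Then $(4)\Rightarrow(3)$ follows at once by specializing (4) to $A=TB_{Y}$.

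For $(3)\Rightarrow(2)$, let $T:Y\rightarrow X$ have $T^{*}$ unconditionally $p$-converging; I claim $TB_{Y}$ is a $p$-($V^{*}$) set, i.e.\ $\theta_{p}(T)=0$. Indeed, for any $(x^{*}_{n})_{n}\in l^{w}_{p}(X^{*})$ ($(x^{*}_{n})_{n}\in c^{w}_{0}(X^{*})$ when $p=\infty$), Theorem~\ref{3.2} applied to $T^{*}$ gives $\|T^{*}x^{*}_{n}\|\rightarrow0$, hence $\sup_{y\in TB_{Y}}|<x^{*}_{n},y>|=\sup_{y\in B_{Y}}|<T^{*}x^{*}_{n},y>|=\|T^{*}x^{*}_{n}\|\rightarrow0$; so $\theta_{p}(T)=0$, and then (3) forces $\omega(T)=0$, that is, $T$ is weakly compact. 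For $(2)\Rightarrow(1)$ I mimic the implication $(1)\Rightarrow(2)$ of Theorem~\ref{3.1}, but with $l_{1}$ in place of $l_{\infty}$: let $K\subseteq X$ be a $p$-($V^{*}$) set and $(x_{n})_{n}$ a sequence in $K$; define $T:l_{1}\rightarrow X$ by $T(a_{n})_{n}=\sum_{n}a_{n}x_{n}$, so that $Te_{n}=x_{n}$ and $T^{*}x^{*}=(<x^{*},x_{n}>)_{n}\in l_{\infty}$. For any $(x^{*}_{n})_{n}\in l^{w}_{p}(X^{*})$ ($(x^{*}_{n})_{n}\in c^{w}_{0}(X^{*})$ when $p=\infty$) one has $\|T^{*}x^{*}_{n}\|_{\infty}=\sup_{k}|<x^{*}_{n},x_{k}>|\leq\sup_{x\in K}|<x^{*}_{n},x>|\rightarrow0$, so $T^{*}$ sends weakly $p$-summable sequences to norm-null sequences and hence, by Theorem~\ref{3.2}, is unconditionally $p$-converging. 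By (2), $T$ is weakly compact, so $TB_{l_{1}}$ is relatively weakly compact; since $x_{n}=Te_{n}\in TB_{l_{1}}$, the sequence $(x_{n})_{n}$ has a weakly convergent subsequence, and the Eberlein--\v{S}mulian theorem yields that $K$ is relatively weakly compact.

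I do not anticipate a genuine difficulty: the statement is a straightforward dual counterpart of Theorem~\ref{3.1}. The two spots worth a little attention are the $p=\infty$ bookkeeping (there ``weakly $p$-summable'' means ``weakly null'', $p$-($V^{*}$) sets are the Dunford-Pettis sets, and ``unconditionally $p$-converging'' means completely continuous), and the elementary inequality $\widehat{d}(A,\overline{K}^{w})\leq\widehat{d}(A,K)$ used in $(1)\Rightarrow(4)$, which is precisely the observation that upgrades the qualitative property~(1) to the quantitative estimate~(4) with no loss of constant.
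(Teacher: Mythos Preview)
Your proof is correct and is precisely what the paper intends: the paper itself gives no explicit argument, stating only that the proof is similar to Theorem~\ref{3.1}, and your dualization of that proof (with $l_{1}$ as domain in place of $l_{\infty}$ as codomain in the step $(2)\Rightarrow(1)$) is exactly this. The only cosmetic difference is the order of the cycle, but all the substantive steps match.
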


\begin{cor}\label{3.10}
Let $1\leq p\leq \infty$. If a Banach space $X$ has property $p$-($V^{*}$), then every closed subspace of $X$ has property $p$-($V^{*}$).
\end{cor}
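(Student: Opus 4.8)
The plan is to establish the stronger, purely set-level statement that a $p$-($V^{*}$) subset of a closed subspace $Y$ of $X$ is automatically a $p$-($V^{*}$) subset of $X$; the corollary then follows immediately from the definition of property $p$-($V^{*}$) together with the fact that relative weak compactness of a subset of $Y$ is the same whether computed in $Y$ or in $X$. The latter holds because $Y$, being norm-closed, is weakly closed in $X$, and $\sigma(Y,Y^{*})$ coincides with the restriction of $\sigma(X,X^{*})$ to $Y$ (every element of $Y^{*}$ extends to $X^{*}$ by Hahn--Banach).

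So let $Y$ be a closed subspace of $X$ and let $K\subseteq Y$ be a $p$-($V^{*}$) subset of $Y$. First I would bring in the restriction operator $R\colon X^{*}\to Y^{*}$, $Rx^{*}=x^{*}|_{Y}$, which is bounded with $\|R\|\le 1$ (in fact $R=\iota^{*}$ for the inclusion $\iota\colon Y\hookrightarrow X$, and it is a metric surjection by Hahn--Banach). Given any $(x^{*}_{n})_{n}\in l^{w}_{p}(X^{*})$ (respectively $(x^{*}_{n})_{n}\in c^{w}_{0}(X^{*})$ when $p=\infty$), the image sequence $(x^{*}_{n}|_{Y})_{n}=(Rx^{*}_{n})_{n}$ again lies in $l^{w}_{p}(Y^{*})$ (respectively in $c^{w}_{0}(Y^{*})$): a bounded operator maps weakly $p$-summable sequences to weakly $p$-summable sequences, with $\|(Rx^{*}_{n})_{n}\|^{w}_{p}\le\|R\|\,\|(x^{*}_{n})_{n}\|^{w}_{p}$, and likewise maps weakly null sequences to weakly null sequences.

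Next, since $K\subseteq Y$ is a $p$-($V^{*}$) subset of $Y$ and $(x^{*}_{n}|_{Y})_{n}\in l^{w}_{p}(Y^{*})$, the definition gives $\lim_{n}\sup_{x\in K}|\langle x^{*}_{n}|_{Y},x\rangle|=0$. For $x\in K\subseteq Y$ one has $\langle x^{*}_{n}|_{Y},x\rangle=\langle x^{*}_{n},x\rangle$, hence $\lim_{n}\sup_{x\in K}|\langle x^{*}_{n},x\rangle|=0$. As $(x^{*}_{n})_{n}$ was arbitrary in $l^{w}_{p}(X^{*})$ (resp. $c^{w}_{0}(X^{*})$), this shows $K$ is a $p$-($V^{*}$) subset of $X$. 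Property $p$-($V^{*}$) of $X$ then forces $K$ to be relatively weakly compact in $X$, and by the topological remark above $K$ is relatively weakly compact in $Y$. Thus every $p$-($V^{*}$) subset of $Y$ is relatively weakly compact, i.e. $Y$ has property $p$-($V^{*}$).

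There is no genuine obstacle here; the only points needing (routine) care are the stability of $l^{w}_{p}(\cdot)$ and $c^{w}_{0}(\cdot)$ under the bounded operator $R$, and the transfer of relative weak compactness between $Y$ and $X$, which rests on $Y$ being weakly closed. As an alternative one could argue through the operator-theoretic characterization of property $p$-($V^{*}$) in the preceding theorem: given $T\colon Z\to Y$ with $T^{*}$ unconditionally $p$-converging, the composition $\iota T\colon Z\to X$ satisfies $(\iota T)^{*}=T^{*}\iota^{*}=T^{*}R$, which is still unconditionally $p$-converging because $R$ is bounded; hence $\iota T$ is weakly compact, and therefore so is $T$. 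The direct argument via $p$-($V^{*}$) sets is, however, the shortest route.
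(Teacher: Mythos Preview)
Your proof is correct. The paper gives no explicit proof of this corollary; it is simply recorded as an immediate consequence of the preceding characterization theorem (in particular its equivalence (1)$\Leftrightarrow$(2), which is exactly your ``alternative'' operator-theoretic argument via $\iota T$ and $(\iota T)^{*}=T^{*}R$). Your primary route---showing directly that a $p$-($V^{*}$) subset of $Y$ is a $p$-($V^{*}$) subset of $X$ via the restriction map $R=\iota^{*}$ and then transferring relative weak compactness back to $Y$---is an equally valid and slightly more self-contained argument that avoids invoking the operator characterization; both approaches hinge on the same trivial fact that $R$ preserves weak $p$-summability.
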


\begin{cor}\label{3.5}
Let $1\leq p\leq\infty$ and $X$ be a Banach space. Then
\item[(1)]If $X$ has property $p$-($V$), then $X^{*}$ has property $p$-($V^{*}$);
\item[(2)]If $X^{*}$ has property $p$-($V$), then $X$ has property $p$-($V^{*}$).
\end{cor}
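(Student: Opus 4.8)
The plan is to derive both implications from the set characterization of property $p$-($V$) given in Theorem~\ref{3.1}, combined with two routine facts about the canonical embedding $X\hookrightarrow X^{**}$: that it maps $l^w_p(X)$ isometrically into $l^w_p(X^{**})$ (and $c^w_0(X)$ into $c^w_0(X^{**})$ when $p=\infty$), and that $X$, being a norm-closed subspace of $X^{**}$, is weakly closed in $X^{**}$, with $\sigma(X^{**},X^{***})$ restricting on $X$ to the weak topology $\sigma(X,X^*)$.

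For (1), I would take a $p$-($V^*$) subset $K$ of $X^*$ and show it is a $p$-($V$) subset of $X^*$ viewed as the dual of $X$. Indeed, given $(x_n)_n\in l^w_p(X)$, a Goldstine argument — the finite partial sums $\Phi\mapsto\sum_{n\le N}|<x_n,\Phi>|^p$ are $\sigma(X^{***},X^{**})$-continuous and $B_{X^*}$ is $\sigma(X^{***},X^{**})$-dense in $B_{X^{***}}$ — shows that $(x_n)_n$ is still weakly $p$-summable in $X^{**}$, so the defining property of the $p$-($V^*$) set $K$ applies to it and yields $\sup_{x^*\in K}|<x^*,x_n>|\to 0$. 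Thus $K$ is a $p$-($V$) subset of $X^*$; since $X$ has property $p$-($V$), Theorem~\ref{3.1} makes $K$ relatively weakly compact, so $X^*$ has property $p$-($V^*$). The case $p=\infty$ is identical with $c^w_0$ replacing $l^w_p$.

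For (2), I would take a $p$-($V^*$) subset $K$ of $X$ and view it inside $X^{**}=(X^*)^*$. Directly from the definitions, for every $(x^*_n)_n\in l^w_p(X^*)$ one has $\sup_{x\in K}|<x,x^*_n>|=\sup_{x\in K}|<x^*_n,x>|\to 0$, so $K$ is a $p$-($V$) subset of $X^{**}$. Since $X^*$ has property $p$-($V$), Theorem~\ref{3.1} gives that $K$ is relatively weakly compact in $X^{**}$; its $\sigma(X^{**},X^{***})$-closure $C$ is then weakly compact in $X^{**}$ and, because $X$ is weakly closed in $X^{**}$, $C\subseteq X$. As $\sigma(X^{**},X^{***})$ restricts to $\sigma(X,X^*)$ on $X$, the set $C$ is weakly compact in $X$, whence $K$ is relatively weakly compact in $X$ and $X$ has property $p$-($V^*$).

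Everything is short once Theorem~\ref{3.1} is in hand; the two steps that need a word of justification are the Goldstine argument in (1), i.e. that weak $p$-summability survives the canonical embedding, and in (2) the passage of relative weak compactness from $X^{**}$ down to $X$, which is exactly where one uses that $X$ is weakly closed in its bidual.
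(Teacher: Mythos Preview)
Your argument is correct. The paper states this result as a corollary without proof; the intended route is exactly through the set characterization in Theorem~\ref{3.1}, and for part (2) the one-line alternative suggested by the placement is to apply part (1) to $X^{*}$ (so that $X^{**}$ has property $p$-($V^{*}$)) and then invoke Corollary~\ref{3.10} to pass to the closed subspace $X$, which simply packages your weak-closure transfer. One small simplification in (1): the Goldstine density step is unnecessary, since for any $\Phi\in B_{X^{***}}$ the restriction $J_{X}^{*}\Phi$ lies in $B_{X^{*}}$ and $<\Phi,J_{X}x_{n}>=<J_{X}^{*}\Phi,x_{n}>$, so $\|(J_{X}x_{n})_{n}\|_{p}^{w}\leq\|(x_{n})_{n}\|_{p}^{w}$ immediately.
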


We remark that the converse of Corollary \ref{3.5} is not true for all $1\leq p\leq\infty$. J. Bourgain and F. Delbaen (see \cite{BD}) constructed a Banach space $X_{BD}$ such that $X_{BD}$ has the Schur property, $X^{*}_{BD}$ is isomorphic to an $L_{1}$-space. Thus, the space $X_{BD}$ fails property $p$-($V$) for all $1\leq p\leq\infty$. Since $X^{*}_{BD}$ is isomorphic to an $L_{1}$-space, $X^{*}_{BD}$ has property $1$-($V^{*}$) and hence property $p$-($V^{*}$) for all $1\leq p\leq\infty$.

\begin{defn}
Let $1\leq p\leq \infty$. We say that a Banach space $X$ is \textit{weakly sequentially complete of order $p$} if every weakly $p$-Cauchy sequence in $X$ is weakly $p$-convergent.
\end{defn}

The weakly sequential completeness of order $\infty$ is precisely the classical weakly sequential completeness. It is easy to verify that for $1\leq p<q\leq \infty$, a Banach space $X$ is weakly sequentially complete of order $p$ whenever $X$ is weakly sequentially complete of order $q$.
\begin{thm}
Let $1<p<2$. If a Banach space $X$ has property $p$-($V^{*}$), then $X$ is weakly sequentially complete of order $p$.
\end{thm}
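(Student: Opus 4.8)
The plan is to take an arbitrary weakly $p$-Cauchy sequence $(x_n)_n$ in $X$ and show that it is weakly $p$-convergent. Since weakly $p$-summable sequences are weakly $q$-summable for $q\geq p$, a weakly $p$-Cauchy sequence is weakly Cauchy; hence $(x_n)_n$ is bounded and $weak^{*}$-converges to some $x^{**}\in X^{**}$. The two things to prove are that $x^{**}\in X$ and that, once we know $x_n\to x$ weakly for some $x\in X$, the sequence $(x_n-x)_n$ actually lies in $l^{w}_{p}(X)$.

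First I would show that the bounded set $A=\{x_n:n\in\mathbb{N}\}$ is a $p$-($V^{*}$) set. By Theorem \ref{3.4} it suffices to check that every operator $T:X\to l_{p}$ maps $A$ onto a relatively norm compact set, i.e.\ that $(Tx_n)_n$ converges in norm in $l_{p}$. Since bounded operators preserve weakly $p$-Cauchy sequences, $(Tx_n)_n$ is weakly $p$-Cauchy in $l_{p}$. If it were not norm Cauchy, one could extract strictly increasing index sequences $(a_i)_i,(b_i)_i$ with $\|Tx_{a_i}-Tx_{b_i}\|\geq\varepsilon$ for all $i$; but $(x_{a_i}-x_{b_i})_i$ is weakly $p$-summable in $X$, so $(Tx_{a_i}-Tx_{b_i})_i$ is weakly $p$-summable in $l_{p}$, and (since $p>1$) the associated operator $l_{p^{*}}\to l_{p}$ sending $e_i$ to $Tx_{a_i}-Tx_{b_i}$ is bounded. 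Because $1<p<2$ we have $p<p^{*}$, so this operator is compact by Pitt's theorem (see \cite{AK}); as $(e_i)_i$ is weakly null in the reflexive space $l_{p^{*}}$, it follows that $\|Tx_{a_i}-Tx_{b_i}\|\to 0$, a contradiction. Hence $(Tx_n)_n$ is norm convergent, and $A$ is a $p$-($V^{*}$) set. Applying property $p$-($V^{*}$), $A$ is relatively weakly compact; a weakly Cauchy sequence contained in a relatively weakly compact set converges weakly, so $x_n\to x$ weakly for some $x\in X$, and in particular $x^{**}=x\in X$.

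It remains to prove $(x_n-x)_n\in l^{w}_{p}(X)$. Fix $x^{*}\in X^{*}$, put $t_n=\langle x^{*},x_n-x\rangle$, so $t_n\to 0$; since a sequence belongs to $l^{w}_{p}(X)$ as soon as $(\langle x^{*},\cdot\rangle)$ is $p$-summable for every $x^{*}\in X^{*}$ (uniform boundedness applied to the partial-sum maps), it is enough to show $\sum_n|t_n|^{p}<\infty$. Suppose not. Group $\mathbb{N}$ into consecutive finite blocks on each of which $\sum|t_n|^{p}\geq 1$, and for each index $a$ occurring in a block choose, recursively and always moving to the right so that the chosen partners form a strictly increasing sequence, a partner $b$ with $|t_b|\leq |t_a|/2$ (possible because $t_m\to 0$; indices with $t_a=0$ may be partnered arbitrarily). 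This produces strictly increasing $(a_i)_i,(b_i)_i$ with $|t_{a_i}-t_{b_i}|^{p}\geq 2^{-p}|t_{a_i}|^{p}$, hence $\sum_i|t_{a_i}-t_{b_i}|^{p}=\infty$, contradicting that $(x_{a_i}-x_{b_i})_i$ is weakly $p$-summable. Therefore $(x_n-x)_n\in l^{w}_{p}(X)$, i.e.\ $(x_n)_n$ is weakly $p$-convergent to $x$.

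The main obstacle is the first step, and it is precisely where the hypothesis $1<p<2$ enters: the inequality $p<p^{*}$ is what makes Pitt's theorem applicable and forces weakly $p$-summable sequences in $l_{p}$ to be norm null. For $p\geq 2$ this mechanism breaks down — the unit vector basis of $l_{p}$ is then weakly $p$-summable but not norm null — and one should not expect the conclusion to survive. The last step is routine, but requires the small bookkeeping indicated to guarantee that both index sequences come out strictly increasing.
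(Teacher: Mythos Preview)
Your proof is correct and follows essentially the same route as the paper's: both show via Theorem~\ref{3.4} that the weakly $p$-Cauchy sequence is a $p$-($V^{*}$) set by checking that every operator $T:X\to l_{p}$ sends it to a norm convergent sequence, and both exploit $1<p<2$ precisely to make weakly $p$-summable sequences in $l_{p}$ norm null. The only difference is packaging: the paper states this last fact as ``the identity $I_{p}:l_{p}\to l_{p}$ is unconditionally $p$-converging'' and then invokes Theorem~\ref{3.2}, whereas you unwind that statement directly via Pitt's theorem (which is exactly how one proves it). Your final paragraph --- the scalar argument showing that a weakly $p$-Cauchy sequence that converges weakly to $x$ satisfies $(x_n-x)_n\in l^{w}_{p}(X)$ --- is a detail the paper suppresses with the word ``Thus''; your justification is correct (the blocking is unnecessary but harmless: simply taking $a_i=i$ already works).
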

\begin{proof}
It follows from $1<p<2$ that the identity $I_{p}: l_{p}\rightarrow l_{p}$ is unconditionally $p$-converging. By Theorem \ref{3.2}, we see that every weakly $p$-Cauchy sequence in $l_{p}$ is convergent in norm. Let $(x_{n})_{n}$ be a weakly $p$-Cauchy sequence in $X$. Then, for every operator $T: X\rightarrow l_{p}$, the sequence $(Tx_{n})_{n}$ is weakly $p$-Cauchy and hence is convergent in norm. By Theorem \ref{3.4}, we get that $(x_{n})_{n}$ is a $p$-($V^{*}$) set. Since $X$ has property $p$-($V^{*}$), the sequence $(x_{n})_{n}$ is relatively weakly compact. Thus, $(x_{n})_{n}$ is weakly $p$-convergent.
\end{proof}

\begin{cor}
Let $1<p<2$. If a Banach space $X$ has property $p$-($V$), then $X^{*}$ is weakly sequentially complete of order $p$.
\end{cor}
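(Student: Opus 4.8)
The plan is to deduce this corollary in one line from Corollary~\ref{3.5}(1) together with the theorem immediately preceding it. First I would apply Corollary~\ref{3.5}(1): since $X$ has property $p$-($V$), the dual $X^{*}$ has property $p$-($V^{*}$). Because $1<p<2$, the previous theorem — applied with $X^{*}$ in place of $X$ — then gives at once that $X^{*}$ is weakly sequentially complete of order $p$. There is essentially nothing more to do.

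For completeness I would also indicate the unwound argument, which makes transparent why $1<p<2$ is the point at issue. Given a weakly $p$-Cauchy sequence $(x^{*}_{n})_{n}$ in $X^{*}$ and an operator $T:l_{p^{*}}\to X$, the adjoint $T^{*}$ carries $X^{*}$ into $l_{p^{*}}^{*}=l_{p}$ and sends $(x^{*}_{n})_{n}$ to a weakly $p$-Cauchy sequence in $l_{p}$; since for $1<p<2$ the identity on $l_{p}$ is unconditionally $p$-converging, Theorem~\ref{3.2} forces $(T^{*}x^{*}_{n})_{n}$ to converge in norm, so $T^{*}(\{x^{*}_{n}:n\in\mathbb{N}\})$ is relatively norm compact. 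By Theorem~\ref{3.8} this makes $\{x^{*}_{n}:n\in\mathbb{N}\}$ a $p$-($V$) set in $X^{*}$, and then property $p$-($V$) of $X$ together with Theorem~\ref{3.1} shows it is relatively weakly compact; combined with the weak $p$-Cauchy hypothesis this yields weak $p$-convergence of $(x^{*}_{n})_{n}$.

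The only step that is not completely routine — in either route — is the passage from ``relatively weakly compact and weakly $p$-Cauchy'' to ``weakly $p$-convergent''. But that step is exactly what the preceding theorem already establishes in its final sentence, so the composition route I would actually write down runs into no obstacle whatsoever; accordingly I would present the corollary with the short proof via Corollary~\ref{3.5}.
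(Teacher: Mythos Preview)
Your proposal is correct and matches the paper's intended argument exactly: the paper states this corollary without proof, as it follows immediately from Corollary~\ref{3.5}(1) and the preceding theorem applied to $X^{*}$. Your unwound version is also fine and simply recapitulates the proof of that preceding theorem in the dual setting.
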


\section{Pe{\l}czy\'{n}ski's property ($V$) of order $p$ and Pe{\l}czy\'{n}ski's property ($V^{*}$) of order $p$ in vector-valued sequence spaces}

Let $(X_{n})_{n}$ be a sequence of Banach spaces and $1\leq p<\infty$. We denote by $(\sum_{n=1}^{\infty}\oplus X_{n})_{p}$ the space of all vector-valued sequences $x=(x_{n})_{n}$ with $x_{n}\in X_{n}(n\in \mathbb{N})$, for which $$\|x\|=(\sum_{n=1}^{\infty}\|x_{n}\|^{p})^{\frac{1}{p}}<\infty.$$
Similarly, $(\sum_{n=1}^{\infty}\oplus X_{n})_{0}$ denotes the space of all vector-valued sequences $x=(x_{n})_{n}$ with $x_{n}\in X_{n}(n\in \mathbb{N})$, for which $\lim_{n\rightarrow \infty}\|x_{n}\|=0$, endowed with the supreme norm. The direct sum in the sense of $l_{\infty}$ of $(X_{n})_{n}$, denoted by $(\sum_{n=1}^{\infty}\oplus X_{n})_{\infty}$, is defined in an analogous way. For every $n\in \mathbb{N}$, $I_{n}$ will denote the canonical injection from $X_{n}$ into $(\sum_{n=1}^{\infty}\oplus X_{n})_{p}$ and $\pi_{n}$ will denote the canonical projection from $(\sum_{n=1}^{\infty}\oplus X_{n})_{p}$ into $X_{n}$. We denote the canonical injection $J_{n}$ from $X^{*}_{n}$ into $(\sum_{n=1}^{\infty}\oplus X^{*}_{n})_{p^{*}}$ and the canonical projection from $(\sum_{n=1}^{\infty}\oplus X^{*}_{n})_{p^{*}}$ onto $X^{*}_{n}$ by $P_{n}$. Clearly, $I^{*}_{n}=P_{n}$ and $\pi^{*}_{n}=J_{n}$.

\begin{thm}\label{3.11}
Let $(X_{n})_{n}$ be a sequence of Banach spaces and let $X=(\sum_{n=1}^{\infty}\oplus X_{n})_{p}(1<p<\infty)$ or $X=(\sum_{n=1}^{\infty}\oplus X_{n})_{0}$. The following are equivalent for a bounded subset $A$ of $X^{*}$:
\item[(1)]$A$ is a $p^{*}$-($V$) set;
\item[(2)]$P_{n}(A)$ is a $p^{*}$-($V$) set for each $n\in \mathbb{N}$ and $$\lim_{n\rightarrow \infty}\sup\{\sum_{k=n}^{\infty}\|P_{k}x^{*}\|^{p^{*}}:x^{*}\in A\}=0.$$
\end{thm}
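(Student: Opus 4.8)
The plan is to prove both implications by exploiting the coordinatewise structure of $X = (\sum_n \oplus X_n)_p$ (or the $c_0$-sum) together with the characterization of $p^*$-$(V)$ sets from Theorem~\ref{3.8} — namely, $A$ is a $p^*$-$(V)$ set iff $T^*(A)$ is relatively norm compact for every operator $T: l_p \to X$ (since $(p^*)^* = p$, and $l_p \in W_{p^*}$). Throughout I will use that $X^* = (\sum_n \oplus X_n^*)_{p^*}$ and that $P_n = I_n^*$, so each $P_n$ is a norm-one operator; hence if $A$ is a $p^*$-$(V)$ set then so is each $P_n(A)$, because $P_n \circ (\text{anything from } l_p \text{ into } X_n)$ composed appropriately still lands us in the compactness criterion. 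The genuinely new content is the tail condition.

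For $(1) \Rightarrow (2)$: the statement that each $P_n(A)$ is a $p^*$-$(V)$ set follows quickly from the fact that $I_n: X_n \to X$ is an isometric embedding and $P_n = I_n^*$, so composing with any operator $S: l_p \to X_n$ gives $I_n S: l_p \to X$, whence $(I_nS)^*(A) = S^* P_n(A)$ is relatively norm compact by Theorem~\ref{3.8} applied to $A$; this says $P_n(A)$ is a $p^*$-$(V)$ set in $X_n^*$. For the tail condition, suppose it fails: then there exist $\varepsilon_0 > 0$, a subsequence of indices, and elements $x^*_{(j)} \in A$ with $\sum_{k=m_j}^\infty \|P_k x^*_{(j)}\|^{p^*} > \varepsilon_0$. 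By a standard gliding-hump argument one extracts strictly increasing integers $m_1 < m_1' \le m_2 < m_2' \le \cdots$ and functionals $y^*_j$, where $y^*_j$ is the restriction of some $x^*_{(j)} \in A$ to the block of coordinates $(m_j, m_j']$, with $\sum_{k=m_j+1}^{m_j'} \|P_k x^*_{(j)}\|^{p^*} > \varepsilon_0/2$. For each $j$ choose, on that coordinate block, a norming vector $x_j \in X$ supported in $(m_j, m_j']$ with $\|x_j\| \le 1$ and $\langle x^*_{(j)}, x_j \rangle > (\varepsilon_0/2)^{1/p^*} \cdot c$ for an absolute constant $c$ (this is just the duality $(\ell_{p^*})^* = \ell_p$ on the block, realized by taking $x_j$ proportional to the vector whose $k$-th block-coordinate has norm $\|P_k x^*_{(j)}\|^{p^*/p}$ and points in the norming direction of $P_k x^*_{(j)}$). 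Because the $x_j$ have pairwise disjoint supports and $\sum \|x_j\|^p \le \sum 1$ diverges, one instead normalizes so that $(x_j)_j$ becomes weakly $p$-summable: disjointly supported vectors in the $\ell_p$-sum with uniformly bounded norms form a weakly $p$-summable sequence (indeed $\|(x_j)\|_p^w$ is controlled by $\sup_j \|x_j\|$ via the $\ell_{p^*}$-$\ell_p$ duality at the level of the sum). But then, since $A$ is a $p^*$-$(V)$ set, $\sup_{x^* \in A} |\langle x^*, x_j\rangle| \to 0$, contradicting $\langle x^*_{(j)}, x_j \rangle \ge c' \varepsilon_0^{1/p^*} > 0$.

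For $(2) \Rightarrow (1)$: let $(x_n)_n \in \ell^w_{p^*}(X)$ (or $c_0^w(X)$ in the $c_0$-sum case); I must show $\sup_{x^* \in A} |\langle x^*, x_n\rangle| \to 0$. Fix $\varepsilon > 0$. Using the tail condition, pick $N$ with $\sup_{x^* \in A} \sum_{k \ge N} \|P_k x^*\|^{p^*} < \varepsilon^{p^*}$. Split $x^* = \sum_{k < N} J_k P_k x^* + \sum_{k \ge N} J_k P_k x^*$. On the tail, Hölder gives $|\langle \sum_{k \ge N} J_k P_k x^*, x_n\rangle| \le (\sum_{k\ge N}\|P_k x^*\|^{p^*})^{1/p^*} (\sum_{k \ge N}\|P_k x_n\|^p)^{1/p} \le \varepsilon \cdot \|x_n\|$, and $\|x_n\|$ is bounded uniformly in $n$. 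On the finite head, for each fixed $k < N$ the projections $(P_k x_n)_n$ form a weakly $p^*$-summable sequence in $X_k$ (as $P_k$ is bounded), and since $P_k(A)$ is a $p^*$-$(V)$ set, $\sup_{x^* \in A} |\langle P_k x^*, P_k x_n\rangle| \to 0$ as $n \to \infty$; summing the finitely many indices $k < N$ and applying Hölder across them handles the head. Combining, $\limsup_n \sup_{x^* \in A}|\langle x^*, x_n\rangle| \le C\varepsilon$, and $\varepsilon$ was arbitrary.

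The main obstacle is the gliding-hump construction in $(1) \Rightarrow (2)$: one must simultaneously (i) pass to a subsequence of the witnessing functionals so their "heavy tails" become disjointly supported blocks, (ii) select block-supported norming vectors $x_j$ of controlled norm realizing the $\ell_p$-$\ell_{p^*}$ duality on each block, and (iii) verify that the resulting disjointly supported sequence is genuinely weakly $p^*$-summable (not merely bounded), which is where the $\ell_p$-sum structure is essential and where the $c_0$-sum case needs the separate observation that disjointly supported, uniformly bounded vectors in a $c_0$-sum are weakly null, hence the argument there only needs the $p^* = \infty$-style reasoning, i.e. weak nullity rather than weak $p^*$-summability. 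Care is also needed with the constants coming from the duality map, but those are routine.
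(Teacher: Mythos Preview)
Your argument for the $\ell_p$-sum case ($1<p<\infty$) is correct and the direction $(1)\Rightarrow(2)$ is essentially identical to the paper's: both assume the tail condition fails, extract disjointly supported blocks $[p_n,q_n]$ on which the functionals have mass $>\epsilon_0$, use the $\ell_p$--$\ell_{p^*}$ duality on each block to produce norming vectors $f_n\in X$ of norm $1$ supported there, and then verify directly (via H\"older on each block and summing over disjoint blocks) that $\sum_n|\langle x^*,f_n\rangle|^{p^*}\le \|x^*\|^{p^*}$, so $(f_n)$ is weakly $p^*$-summable and the contradiction follows.

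For $(2)\Rightarrow(1)$ you take a different and slightly more elementary route than the paper. The paper invokes Theorem~\ref{3.8}: for any operator $T:l_p\to X$ (or $T:c_0\to X$ in the $c_0$-sum case) one has $\sup_{x^*\in A}\|T^*x^*-\sum_{k\le n_0}T^*J_kP_kx^*\|\le\|T\|(\sup_{x^*\in A}\sum_{k>n_0}\|P_kx^*\|^{p^*})^{1/p^*}$, so $T^*A$ lies in a compact set plus a small ball, hence is relatively compact. You instead test against an arbitrary $(x_n)\in\ell^w_{p^*}(X)$ directly, splitting $x^*$ into head and tail; this avoids the operator characterization entirely and is arguably cleaner, though both are short.

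There is one genuine slip concerning the $c_0$-sum case. In the statement, when $X=(\sum X_n)_0$ one has $X^*=(\sum X_n^*)_1$ and the exponent $p^*$ is to be read as $1$: condition~(1) asserts $A$ is a $1$-$(V)$ set, and the tail condition is $\sup_{x^*\in A}\sum_{k\ge n}\|P_kx^*\|\to 0$. So in $(1)\Rightarrow(2)$ you need the disjointly supported $(x_j)$ to be weakly $1$-summable (not merely weakly null), and in $(2)\Rightarrow(1)$ you must test against $(x_n)\in\ell^w_1(X)$ (not $c_0^w(X)$). Fortunately disjointly supported unit vectors in a $c_0$-sum \emph{are} weakly $1$-summable (since $\sum_j|\langle x^*,x_j\rangle|\le\sum_j\sum_{k\in\operatorname{supp}x_j}\|P_kx^*\|\le\|x^*\|$), so your construction still yields the contradiction; but your stated justification (``only needs weak nullity'') is the wrong one.
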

\begin{proof}
$(1)\Rightarrow (2)$. It is obvious that $P_{n}(A)$ is a $p^{*}$-($V$) set for each $n\in \mathbb{N}$. Let us assume that
$$\lim_{n\rightarrow \infty}\sup\{\sum_{k=n}^{\infty}\|P_{k}x^{*}\|^{p^{*}}:x^{*}\in A\}\neq 0.$$
By induction, we can find $\epsilon_{0}>0$, two sequences of positive integers $(p_{n})_{n}, (q_{n})_{n}$ with $p_{n}<q_{n}<p_{n+1}(n\in \mathbb{N})$ and a sequence $(x^{*}_{n})_{n}$ in $A$ such that $\sum_{k=p_{n}}^{q_{n}}\|P_{k}x^{*}_{n}\|^{p^{*}} > \epsilon_{0}$ for each $n\in \mathbb{N}$.
By Hahn-Banach Theorem, for each $n\in \mathbb{N}$, there exists a sequence $(x^{(n)}_{k})_{k=p_{n}}^{q_{n}}\in (\sum_{k=p_{n}}^{q_{n}}\oplus X_{k})_{p}$ such that $\sum_{k=p_{n}}^{q_{n}}\|x^{(n)}_{k}\|^{p}=1$ and
\begin{equation}\label{12}
\sum_{k=p_{n}}^{q_{n}}<P_{k}x^{*}_{n},x^{(n)}_{k}>=(\sum_{k=p_{n}}^{q_{n}}\|P_{k}x^{*}_{n}\|^{p^{*}})^{\frac{1}{p^{*}}}>\epsilon_{0}^{\frac{1}{p^{*}}}.
\end{equation}
For every $n\in \mathbb{N}$, we set $f_{n}\in X=(\sum_{k=1}^{\infty}\oplus X_{k})_{p}$ by

\[\pi_{k}(f_{n})= \left\{ \begin{array}
                    {r@{\quad,\quad}l}

 x^{(n)}_{k} & p_{n}\leq k\leq q_{n}\\ 0 & otherwise
 \end{array} \right. \]

Then the sequence $(f_{n})_{n}$ is weakly $p^{*}$-summable. Indeed, for every $x^{*}\in X^{*}$, we have
\begin{align*}
|<x^{*},f_{n}>|&=|\sum_{k=p_{n}}^{q_{n}}<P_{k}x^{*},x^{(n)}_{k}>|\\
&\leq (\sum_{k=p_{n}}^{q_{n}}\|x^{(n)}_{k}\|^{p})^{\frac{1}{p}}\cdot (\sum_{k=p_{n}}^{q_{n}}\|P_{k}x^{*}\|^{p^{*}})^{\frac{1}{p^{*}}}\\
&=(\sum_{k=p_{n}}^{q_{n}}\|P_{k}x^{*}\|^{p^{*}})^{\frac{1}{p^{*}}},
\end{align*}
which implies $$\sum_{n=1}^{\infty}|<x^{*},f_{n}>|^{p^{*}}\leq \sum_{n=1}^{\infty}\sum_{k=p_{n}}^{q_{n}}\|P_{k}x^{*}\|^{p^{*}}\leq
\sum_{k=1}^{\infty}\|P_{k}x^{*}\|^{p^{*}}<\infty.$$
By (1), we get $$\sum_{k=p_{n}}^{q_{n}}<P_{k}x^{*}_{n},x^{(n)}_{k}>=|<x^{*}_{n},f_{n}>|\leq \sup_{x^{*}\in A}|<x^{*},f_{n}>|\rightarrow 0 \quad(n\rightarrow \infty),$$
which contradicts with (\ref{12}).

$(2)\Rightarrow (1)$. Let $T$ be an operator from $l_{p}(c_{0}$ for $p^{*}=1$) into $X$. Then, by (2), we have
$$\sup_{x^{*}\in A}\|\sum_{k=1}^{n}T^{*}\circ J_{k}\circ P_{k}x^{*}-T^{*}x^{*}\|\leq \|T\|\cdot \sup_{x^{*}\in A}(\sum_{k=n+1}^{\infty}\|P_{k}x^{*}\|^{p^{*}})^{\frac{1}{p^{*}}}\rightarrow 0 \quad (n\rightarrow \infty).$$
Thus, for every $\epsilon>0$, there exists $n_{0}\in \mathbb{N}$ such that
$$T^{*}A \subset \sum_{k=1}^{n_{0}}T^{*}\circ J_{k}\circ P_{k}A+\epsilon B_{l_{p^{*}}}.$$
Since $P_{k}(A)$ is a $p^{*}$-($V$) set for each $k=1,2,...,n_{0}$, we get, by Theorem \ref{3.8}, that the subset $T^{*}\circ \pi^{*}_{k}\circ P_{k}A$ is relatively norm compact for each $k=1,2,...,n_{0}$ and so is $\sum_{k=1}^{n_{0}}T^{*}\circ J_{k}\circ P_{k}A$. Therefore, the subset $T^{*}A$ is relatively norm compact. Again by Theorem \ref{3.8}, we see that $A$ is a $p^{*}$-($V$) set.

\end{proof}

\begin{thm}\label{3.12}
Let $(X_{n})_{n}$ be a sequence of Banach spaces, $1<p<\infty$, $1\leq q<p^{*}$ and let $X=(\sum_{n=1}^{\infty}\oplus X_{n})_{p}$. Then a bounded subset $A$ of $X^{*}$ is a $q$-($V$) set if and only if each $P_{n}(A)$ does.
\end{thm}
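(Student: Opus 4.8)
The plan is to reduce the claim to the already-proved Theorem \ref{3.11} (the case $q=p^*$), by exploiting the fact that the direct sum is indexed over the \emph{same} scale of $\ell_p$-sums. The forward implication is easy: if $A$ is a $q$-($V$) set in $X^*$, then since $P_n = I_n^*$ and $I_n$ maps $X_n$ into $X = (\sum \oplus X_k)_p$, each $P_n(A) = I_n^*(A)$ is a $q$-($V$) set in $X_n^*$ by the characterization in Theorem \ref{3.8} applied through $I_n$ (composing an operator from $\ell_{q^*}$ into $X_n$ with $I_n$ gives an operator into $X$, and $T^* I_n^* = (I_n T)^*$). So the real content is the converse.

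For the converse, suppose each $P_n(A)$ is a $q$-($V$) set but $A$ is not. By the characterization of Theorem \ref{3.8}, there is an operator $T$ from $\ell_{q^*}$ (or $c_0$ when $q=1$) into $X$ such that $T^*(A)$ is not relatively norm compact; equivalently there is a sequence $(x_n^*)_n$ in $A$ and $\varepsilon_0>0$ with $\|T^* x_n^* - T^* x_m^*\| \ge \varepsilon_0$ for $n\ne m$. The first step is to observe that since $q < p^*$, the tail-condition of Theorem \ref{3.11}(2) is \emph{automatic}: for any bounded $A \subset X^* = (\sum \oplus X_k^*)_{p^*}$ one has $\sup_{x^* \in A}\sum_{k\ge n}\|P_k x^*\|^{p^*} \to 0$ only if $A$ is bounded in the $\ell_{p^*}$-sum, which it is by definition of $X^*$ — wait, this is not automatic, so instead I argue as follows. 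I would show directly that $A$ being a $q$-($V$) set is equivalent to: each $P_n(A)$ is a $q$-($V$) set \emph{and} $A$ is a $p^*$-($V$) set. Indeed, since $q < p^*$ every weakly $q$-summable sequence in $X$ is weakly $p^*$-summable, so every $p^*$-($V$) set is a $q$-($V$) set and hence $A$ $p^*$-($V$) $\Rightarrow$ $A$ $q$-($V$). Conversely, if $A$ is a $q$-($V$) set then it is trivially a $p^*$-($V$) set (the defining condition is tested against a larger class of sequences when $p^*>q$, so a $q$-($V$) set need NOT be $p^*$-($V$)) — so this direction fails and the equivalence is false in general.

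So the correct route keeps $p^*$ fixed on the sum side and handles the $\ell_{q^*}$ operator directly. Given the non-compact $T: \ell_{q^*} \to X$ and $(x_n^*)_n \subset A$, consider the partial-sum projections $R_{n_0} = \sum_{k=1}^{n_0} J_k P_k$ on $X^*$. Since each $P_k(A)$ is a $q$-($V$) set, by Theorem \ref{3.8} the set $(I_k T)^*(P_k(A)) = T^* J_k P_k(A)$ is relatively norm compact in $\ell_{q^*}$, hence so is $\sum_{k=1}^{n_0} T^* J_k P_k (A)$. Therefore $T^*(A)$ being non-compact forces $\sup_{x^*\in A}\|T^* x^* - \sum_{k=1}^{n_0} T^* J_k P_k x^*\| \not\to 0$, i.e. the ``tails'' $\sup_{x^*\in A}\|T^*\,(I - R_{n_0})x^*\|$ do not vanish. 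The main obstacle — and the heart of the argument — is to convert this non-vanishing of tails into a weakly $q$-summable sequence $(f_j)_j$ in $X$ with $\sup_{x^*\in A}|\langle x^*, f_j\rangle| \not\to 0$, contradicting $A$ being a $q$-($V$) set; here I would extract, via a gliding-hump argument on the block structure of $X$ paired with blocks of the $\ell_{q^*}$-basis pushed forward through a near-isometric copy inside $\ell_{q^*}$, successively supported vectors $f_j$ with disjoint ``bands'' $[p_j, q_j]$ of coordinates, normalize them, and check weak $q$-summability exactly as in the computation in the proof of Theorem \ref{3.11} (the estimate $|\langle x^*, f_j\rangle| \le (\sum_{k\in \text{band}_j}\|P_k x^*\|^{p^*})^{1/p^*}$ still holds, and disjointness of bands gives the $\ell_{p^*}$-, hence the weaker $\ell_q$-, summability of $(\langle x^*, f_j\rangle)_j$ uniformly over $x^* \in A$). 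This last verification, together with making the block extraction compatible with the operator $T$, is where the only real work lies; everything else is bookkeeping parallel to Theorem \ref{3.11}.
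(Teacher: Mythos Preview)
Your proposal has a genuine gap, and it lies exactly where you flag ``the only real work''. Two things go wrong simultaneously.

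First, a logical slip: you assume for contradiction that $A$ is \emph{not} a $q$-($V$) set, and then aim to produce a weakly $q$-summable sequence $(f_j)$ with $\sup_{x^*\in A}|\langle x^*,f_j\rangle|\not\to 0$ ``contradicting $A$ being a $q$-($V$) set''. That is not a contradiction; it merely reconfirms your hypothesis.

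Second, and more seriously, the claimed weak $q$-summability of your band-supported $(f_j)$ does not follow. The block estimate $|\langle x^*,f_j\rangle|\le(\sum_{k\in\text{band}_j}\|P_kx^*\|^{p^*})^{1/p^*}$ and disjointness of bands give only that $(\langle x^*,f_j\rangle)_j\in\ell_{p^*}$ for each $x^*$, i.e.\ $(f_j)$ is weakly $p^*$-summable. Since $q<p^*$, one has $\ell_q\subsetneq\ell_{p^*}$, so this is strictly \emph{weaker} than weak $q$-summability, not stronger. The phrase ``$\ell_{p^*}$-, hence the weaker $\ell_q$-, summability'' has the inclusion backwards.

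The paper's argument avoids both problems by performing the gliding hump on the \emph{dual} side. Starting from a weakly $q$-summable $(x_n)$ in $X$ and $(x_n^*)$ in $A$ with $|\langle x_n^*,x_n\rangle|>\epsilon_0$, one uses the hypothesis $\lim_n\langle P_kx_n^*,\pi_kx_n\rangle=0$ for each fixed $k$ to extract indices $n_j$ and disjoint bands $[k_{j-1}+1,k_j]$ so that the restrictions $y_j^*$ of $x_{n_j}^*$ to these bands satisfy $|\langle y_j^*,x_{n_j}\rangle|>\epsilon_0/2$. Because the $y_j^*$ have disjoint supports in $(\sum\oplus X_k^*)_{p^*}$, they are equivalent to the unit vector basis of $\ell_{p^*}$; and now the inequality $q<p^*$ enters through \emph{Pitt's theorem}: for any operator $T:\ell_{q^*}\to X$ the composition $T^*R:\ell_{p^*}\to\ell_q$ (with $R$ the embedding $e_j\mapsto y_j^*$) is compact, so $(T^*y_j^*)_j$ is relatively norm compact, and by Theorem~\ref{3.8} the sequence $(y_j^*)_j$ is itself a $q$-($V$) set. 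This yields the contradiction $\epsilon_0/2<|\langle y_j^*,x_{n_j}\rangle|\le\sup_i|\langle y_i^*,x_{n_j}\rangle|\to 0$, since $(x_{n_j})_j$ is weakly $q$-summable. The essential idea you are missing is that the disjoint-block structure must be exploited in $X^*$ (giving an $\ell_{p^*}$-copy) and then paired with Pitt's theorem, rather than in $X$.
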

\begin{proof}
We need only prove the sufficient part. Assume that $A$ is not a $q$-($V$) set. Then there exist $\epsilon_{0}>0$, a sequence $(x_{n})_{n}\in l^{w}_{q}(X)$ and a sequence $(x^{*}_{n})_{n}$ in $A$ such that
\begin{equation}\label{6}
|<x^{*}_{n},x_{n}>|=|\sum_{k=1}^{\infty}<P_{k}x^{*}_{n},\pi_{k}x_{n}>|>\epsilon_{0}, \quad n=1,2,...
\end{equation}

By the assumption, we get
\begin{equation}\label{5}
\lim_{n\rightarrow \infty}<P_{k}x^{*}_{n},\pi_{k}x_{n}>=0, \quad k=1,2,...
\end{equation}
By induction on $n$ in (\ref{6}) and $k$ in (\ref{5}), we get $$1=n_{1}<n_{2}<\cdots, \quad 0=k_{0}<k_{1}<k_{2}<\cdots,$$ such that
\begin{equation}\label{7}
|\sum_{k=k_{j}+1}^{\infty}<P_{k}x^{*}_{n_{j}},\pi_{k}x_{n_{j}}>|<\frac{\epsilon_{0}}{4}, \quad j=1,2,...
\end{equation}
and
\begin{equation}\label{20}
|\sum_{k=1}^{k_{j}}<P_{k}x^{*}_{n_{j+1}},\pi_{k}x_{n_{j+1}}>|<\frac{\epsilon_{0}}{4}, \quad j=1,2,...
\end{equation}
By (\ref{6}), (\ref{7}) and (\ref{20}), we get
$$|\sum_{k=k_{j-1}+1}^{k_{j}}<P_{k}x^{*}_{n_{j}},\pi_{k}x_{n_{j}}>|>\frac{\epsilon_{0}}{2}, \quad j=2,3,...$$
By (\ref{6}) and (\ref{7}), we get
$$|\sum_{k=1}^{k_{1}}<P_{k}x^{*}_{n_{1}},\pi_{k}x_{n_{1}}>|>\frac{3}{4}\epsilon_{0}>\frac{\epsilon_{0}}{2}.$$
Thus, we have
$$|\sum_{k=k_{j-1}+1}^{k_{j}}<P_{k}x^{*}_{n_{j}},\pi_{k}x_{n_{j}}>|>\frac{\epsilon_{0}}{2}, \quad j=1,2,...$$
For each $j=1,2,...,$ we set $y_{j}=x_{n_{j}}$ and $y^{*}_{j}\in X^{*}$ by

\[P_{k}y^{*}_{j}= \left\{ \begin{array}
                    {r@{\quad,\quad}l}

 P_{k}x^{*}_{n_{j}} & k_{j-1}+1\leq k\leq k_{j}\\ 0 & otherwise
 \end{array} \right. \]

Clearly, $(y_{j})_{j}\in l^{w}_{q}(X)$ and
$$|<y^{*}_{j},y_{j}>|=|\sum_{k=k_{j-1}+1}^{k_{j}}<P_{k}x^{*}_{n_{j}},\pi_{k}x_{n_{j}}>|>\frac{\epsilon_{0}}{2}, \quad j=1,2,...$$
Since the sequence $(y^{*}_{j})_{j}$ has pairwise disjoint supports, we see that $(y^{*}_{j})_{j}$ is equivalent to the unit vector basis $(e_{j})_{j}$ of $l_{p^{*}}$. Let $R$ be an isomorphic embedding from $l_{p^{*}}$ into $X^{*}$ with $Re_{j}=y^{*}_{j}(j=1,2,...)$. Let $T$ be an any operator from $l_{q^{*}}$ into $X$. By Pitt's Theorem, the operator $T^{*}R$ is compact and hence the sequence $(T^{*}y^{*}_{j})_{j}=(T^{*}Re_{j})_{j}$ is relatively norm compact. It follows from Theorem \ref{3.8} that the sequence $(y^{*}_{j})_{j}$ is
a $q$-($V$) set. Since $(y_{j})_{j}$ is weakly $q$-summable, we have
$$|<y^{*}_{n},y_{n}>|\leq \sup_{j}|<y^{*}_{j},y_{n}>|\rightarrow 0 \quad (n\rightarrow \infty),$$
this contradiction concludes the proof.

\end{proof}

The following two lemmas are well-known (see \cite{B}, for example).
\begin{lem}\label{3.14}
Let $(X_{n})_{n}$ be a sequence of Banach spaces. The following are equivalent about a bounded subset $A$ of $(\sum_{n=1}^{\infty}\oplus X_{n})_{1}$:
\item[(1)]$A$ is relatively weakly compact;
\item[(2)]$\pi_{n}(A)$ is relatively weakly compact for each $n\in \mathbb{N}$ and $$\lim_{n\rightarrow \infty}\sup\{\sum_{k=n}^{\infty}\|\pi_{k}x\|:x\in A\}=0.$$
\end{lem}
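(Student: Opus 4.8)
The plan is to prove the two implications separately; $(2)\Rightarrow(1)$ is a routine diagonalization, while $(1)\Rightarrow(2)$ carries the real content. For $(2)\Rightarrow(1)$ I would take an arbitrary sequence $(x^{(j)})_{j}$ in $A$. Since each $\pi_{n}(A)$ is relatively weakly compact, a diagonal argument over $n$ produces a subsequence (not relabelled here) along which $\pi_{n}x^{(j)}$ converges weakly in $X_{n}$ to some $y_{n}$, for every $n$. Weak lower semicontinuity of the norm and the boundedness of $A$ give $\sum_{n=1}^{N}\|y_{n}\|\leq\sup_{x\in A}\|x\|$ for every $N$, so $y:=(y_{n})_{n}\in(\sum_{n}\oplus X_{n})_{1}$. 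To see that $x^{(j)}\to y$ weakly, I fix $x^{*}=(x^{*}_{n})_{n}$ in the dual $(\sum_{n}\oplus X^{*}_{n})_{\infty}$, split $\langle x^{*},x^{(j)}-y\rangle=\sum_{n}\langle x^{*}_{n},\pi_{n}x^{(j)}-y_{n}\rangle$ into a finite head and a tail, bound the tail uniformly in $j$ by $\|x^{*}\|\big(\sup_{x\in A}\sum_{k>N}\|\pi_{k}x\|+\sum_{k>N}\|y_{k}\|\big)$ (the first summand is controlled by hypothesis (2), the second because $y$ lies in the space), and let $j\to\infty$ in the finite head. Eberlein--\v{S}mulian then yields that $A$ is relatively weakly compact.

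For $(1)\Rightarrow(2)$, relative weak compactness of each $\pi_{n}(A)$ is immediate. Suppose, towards a contradiction, that $c:=\lim_{n}\sup_{x\in A}\sum_{k\geq n}\|\pi_{k}x\|>0$. A sliding-hump argument produces $\epsilon_{0}:=c/2$, a sequence $(x^{(j)})_{j}$ in $A$, and pairwise disjoint finite sets of integers $I_{1},I_{2},\dots$ with $\max I_{j}<\min I_{j+1}$ and $\min I_{j}\to\infty$, such that $\sum_{k\in I_{j}}\|\pi_{k}x^{(j)}\|>\epsilon_{0}$ for every $j$. Using hypothesis (1) and Eberlein--\v{S}mulian I pass to a subsequence (relabelling the paired $I_{j}$ accordingly) along which $x^{(j)}\to x$ weakly for some $x\in X$; since $x$ lies in the space, $\sum_{k\in I_{j}}\|\pi_{k}x\|\to 0$, so the weakly null vectors $v^{(j)}:=x^{(j)}-x$ satisfy $\sum_{k\in I_{j}}\|\pi_{k}v^{(j)}\|>\delta:=\epsilon_{0}/2$ for all large $j$; discard the rest and relabel.

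Now I would let $y^{(j)}$ be the vector agreeing with $v^{(j)}$ on the coordinates in $I_{j}$ and zero elsewhere. The $y^{(j)}$ have pairwise disjoint finite supports, so $\|\sum_{j}a_{j}y^{(j)}\|=\sum_{j}|a_{j}|\,\|y^{(j)}\|$ with $\delta<\|y^{(j)}\|\leq\sup_{x\in A}\|x\|$; hence $Y:=\overline{\operatorname{span}}\{y^{(j)}:j\in\mathbb{N}\}$ is isomorphic to $l_{1}$, with $(y^{(j)})_{j}$ a basis. Choosing (Hahn--Banach) functionals $\psi^{(j)}\in B_{X^{*}}$ supported on $I_{j}$ with $\langle\psi^{(j)},y^{(j)}\rangle=\|y^{(j)}\|$, the formula $Px:=\sum_{j}\|y^{(j)}\|^{-1}\langle\psi^{(j)},x\rangle\,y^{(j)}$ defines a norm-one projection of $X$ onto $Y$ (the series converges because the $I_{j}$ are disjoint). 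Since $\psi^{(j)}$ is supported on $I_{j}$, one has $\langle\psi^{(j)},v^{(j)}\rangle=\langle\psi^{(j)},y^{(j)}\rangle$, so the $j$-th coordinate of $Pv^{(j)}$ in the basis $(y^{(l)})_{l}$ equals $1$. On the other hand, by (1) the set $P(A)$ is relatively weakly compact in $Y\cong l_{1}$, hence relatively norm compact by the Schur property of $l_{1}$; thus some subsequence $(Pv^{(j_{i})})_{i}$ converges in norm, and a norm-convergent sequence in $l_{1}$ has uniformly small tails --- which is incompatible with the $j_{i}$-th coordinate of $Pv^{(j_{i})}$ being $1$ while $j_{i}\to\infty$. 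This contradiction completes $(1)\Rightarrow(2)$.

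I expect the last step of $(1)\Rightarrow(2)$ to be the main obstacle: weak convergence of $(x^{(j)})_{j}$ gives no control over the blocked vectors $y^{(j)}$, so one cannot argue inside $X$ directly, and the device is to transfer everything into the complemented copy of $l_{1}$, where the Schur property upgrades relative weak compactness to relative norm compactness and the blocks become honest basis vectors. The bookkeeping that needs care is arranging the sliding hump so the intervals drift to infinity (so that the weak limit $x$ can be absorbed) and checking that $Y$ is genuinely $1$-complemented in $X$.
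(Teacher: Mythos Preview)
Your proof is correct. The paper does not actually prove this lemma: it introduces it (together with Lemma~\ref{3.15}) by the sentence ``The following two lemmas are well-known (see \cite{B}, for example)'' and gives no argument, so there is nothing in the paper to compare your approach against.

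A couple of minor remarks on your write-up. In $(1)\Rightarrow(2)$, once you have the bounded projection $P$ onto the complemented copy $Y\cong l_{1}$, you do not need to route the contradiction through $P(A)$ and ``uniformly small tails'': since $(v^{(j)})_{j}$ is weakly null and $P$ is bounded, $(Pv^{(j)})_{j}$ is weakly null in $Y\cong l_{1}$, and the Schur property gives $\|Pv^{(j)}\|\to 0$ directly; this contradicts $\|Pv^{(j)}\|\geq \|y^{(j)}\|>\delta$ (the $j$-th coefficient equals $1$ and the $y^{(l)}$ are disjointly supported). Also, when you verify that $P$ is well-defined and of norm one, it is worth recording explicitly the inequality
\[
\sum_{j}\bigl|\langle\psi^{(j)},x\rangle\bigr|\;\leq\;\sum_{j}\sum_{k\in I_{j}}\|\pi_{k}x\|\;\leq\;\|x\|,
\]
which is exactly where the disjointness of the $I_{j}$ and the $l_{1}$-sum structure enter; you allude to this but it is the one place a reader might pause.
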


\begin{lem}\label{3.15}
Let $(X_{n})_{n}$ be a sequence of Banach spaces and let $X=(\sum_{n=1}^{\infty}\oplus X_{n})_{p}$ ($1<p<\infty$) or $X=(\sum_{n=1}^{\infty}\oplus X_{n})_{0}$. Then a bounded subset $A$ of $X$ is relatively weakly compact if and only if every $\pi_{n}(A)$ does.
\end{lem}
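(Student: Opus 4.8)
The plan is to derive the statement from the explicit description of the dual and bidual of a $p$-direct sum, together with the classical criterion that a bounded subset $B$ of a Banach space $Z$ is relatively weakly compact if and only if its weak$^{*}$-closure $\overline{B}^{w^{*}}$ in $Z^{**}$ is already contained in $Z$. The forward implication costs nothing: each canonical projection $\pi_{n}\colon X\to X_{n}$ is a bounded operator, hence weak-to-weak continuous, so it carries the relatively weakly compact set $A$ onto the relatively weakly compact set $\pi_{n}(A)$.

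For the converse, assume every $\pi_{n}(A)$ is relatively weakly compact and fix $x^{**}\in\overline{A}^{w^{*}}\subseteq X^{**}$. When $1<p<\infty$ one has $X^{*}=(\sum_{n=1}^{\infty}\oplus X_{n}^{*})_{p^{*}}$, hence $X^{**}=(\sum_{n=1}^{\infty}\oplus X_{n}^{**})_{p}$, and under this identification the $n$-th coordinate of any $\xi\in X^{**}$ equals $\pi_{n}^{**}(\xi)$ (recall $\pi_{n}^{*}=J_{n}$). The adjoint $\pi_{n}^{**}\colon X^{**}\to X_{n}^{**}$ is weak$^{*}$-to-weak$^{*}$ continuous and extends $\pi_{n}$, so $\pi_{n}^{**}(x^{**})$ lies in the weak$^{*}$-closure of $\pi_{n}(A)$ taken in $X_{n}^{**}$; since $\pi_{n}(A)$ is relatively weakly compact, that closure is contained in $X_{n}$, and therefore $\pi_{n}^{**}(x^{**})\in X_{n}$ for every $n$. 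Thus all coordinates of $x^{**}$ lie in the respective $X_{n}$, and because $x^{**}\in(\sum_{n}\oplus X_{n}^{**})_{p}$ its coordinate sequence is $p$-summable; hence $x^{**}$ is exactly the element $(\pi_{n}^{**}(x^{**}))_{n}$ of $(\sum_{n}\oplus X_{n})_{p}=X$. This shows $\overline{A}^{w^{*}}\subseteq X$, so $A$ is relatively weakly compact. (The case $X=(\sum_{n}\oplus X_{n})_{0}$ runs the same way, with $X^{*}=(\sum_{n}\oplus X_{n}^{*})_{1}$.) Alternatively one can bypass biduals with a diagonal argument: from any sequence in $A$ extract, using that each $\pi_{n}(A)$ is relatively weakly compact, a subsequence $(x^{(k)})_{k}$ with $\pi_{n}x^{(k)}\to y_{n}\in X_{n}$ weakly for every $n$; identify $y=(y_{n})_{n}$ as an element of $X$ via Fatou's lemma for series applied to $\|y_{n}\|\le\liminf_{k}\|\pi_{n}x^{(k)}\|$; and verify $x^{(k)}\to y$ weakly by noting that for every $x^{*}=(x_{n}^{*})_{n}\in X^{*}$ the tails $\sum_{n>N}\langle x_{n}^{*},\pi_{n}x^{(k)}\rangle$ are uniformly small in $k$ (the coordinates $x_{n}^{*}$ being $p^{*}$-summable), so that Eberlein-\v{S}mulian applies.

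The step I expect to be the only genuine obstacle — everything else being soft — is confirming that the coordinatewise weak limit actually lands back inside $X$ and not merely inside the $\ell_{\infty}$-sum of the $X_{n}$; this is exactly where the $p$-summability encoded in the direct sum is indispensable, and in the bidual formulation it is the deceptively simple observation that an element of $X^{**}$ all of whose coordinates belong to the $X_{n}$ must already lie in $X$.
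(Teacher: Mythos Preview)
The paper does not give its own proof of this lemma; it simply cites it as well-known with a reference to Bombal. Your bidual argument for the case $1<p<\infty$ is correct and is the standard one: since $X^{**}=(\sum_{n}\oplus X_{n}^{**})_{p}$, an element of $X^{**}$ whose coordinates all lie in the respective $X_{n}$ automatically belongs to $X$, the $p$-summability of the coordinate norms being inherited from membership in $X^{**}$.

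The genuine gap is the $c_{0}$-sum case, which you wave off with ``runs the same way''. It does not: here $X^{**}=(\sum_{n}\oplus X_{n}^{**})_{\infty}$, and knowing that each $\pi_{n}^{**}(x^{**})\in X_{n}$ only places $x^{**}$ in $(\sum_{n}\oplus X_{n})_{\infty}$, with nothing forcing $\|\pi_{n}^{**}(x^{**})\|\to 0$. Your alternative diagonal argument fails at exactly the same point, since Fatou yields only $\sup_{n}\|y_{n}\|<\infty$. In fact the obstacle you yourself flagged in your final paragraph cannot be overcome here, because the $c_{0}$-sum case of the lemma \emph{as stated} is false: take $X_{n}=\mathbb{R}$, so that $X=c_{0}$, and let $A=\{e_{1}+\cdots+e_{n}:n\in\mathbb{N}\}$ be the summing basis. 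Then each $\pi_{k}(A)\subseteq\{0,1\}$ is compact, yet $(e_{1}+\cdots+e_{n})_{n}$ has no weakly convergent subsequence in $c_{0}$, its unique weak$^{*}$ cluster point in $\ell_{\infty}$ being the constant sequence $1$. A correct version for the $c_{0}$-sum needs an additional uniform-tail hypothesis, just as in the $\ell_{1}$ lemma immediately preceding this one in the paper.
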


\begin{thm}
Let $(X_{n})_{n}$ be a sequence of Banach spaces, $1\leq q<\infty$ and $1<p<\infty$. Then $(\sum_{n=1}^{\infty}\oplus X_{n})_{p}$ has property $q$-($V$) if and only if each $X_{n}$ does.
\end{thm}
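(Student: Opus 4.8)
The plan is to prove the two implications separately, using the characterizations of $q$-($V$) sets from Theorem~\ref{3.12} (for $q<p^{*}$) and Theorem~\ref{3.11} (for $q\ge p^{*}$), together with the weak-compactness criterion of Lemma~\ref{3.15}, and the hereditary/productivity behaviour already recorded in Corollary~\ref{3.6} (and its analogue for subspaces via the canonical injections~$I_{n}$).

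\textbf{Necessity.} Suppose $X=(\sum_{n}\oplus X_{n})_{p}$ has property $q$-($V$). Each $X_{n}$ is isometric to a $1$-complemented subspace of $X$ via $I_{n}$, with $\pi_{n}$ the norm-one projection onto it; but more to the point $X_{n}$ is isometric to the quotient $X/\ker\pi_{n}$, so by Corollary~\ref{3.6} each $X_{n}$ has property $q$-($V$). (One could equally argue directly: a $q$-($V$) set $B\subset X_{n}^{*}$ pulls back under $I_{n}^{*}=P_{n}$ composed with the other coordinate projections to a $q$-($V$) set in $X^{*}$ whose relative weak compactness forces that of $B$ by Lemma~\ref{3.14}/\ref{3.15}; but invoking Corollary~\ref{3.6} is cleaner.)

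\textbf{Sufficiency.} Assume each $X_{n}$ has property $q$-($V$); we must show every $q$-($V$) set $A\subset X^{*}=(\sum_{n}\oplus X_{n}^{*})_{p^{*}}$ is relatively weakly compact. We split into two cases according to the sign of $q-p^{*}$. If $1\le q<p^{*}$, then by Theorem~\ref{3.12} the hypothesis that $A$ is a $q$-($V$) set is equivalent to: each $P_{n}(A)$ is a $q$-($V$) set in $X_{n}^{*}$. Since $X_{n}$ has property $q$-($V$), each $P_{n}(A)$ is relatively weakly compact. Now I claim $A$ is automatically relatively weakly compact in this regime: because $l_{p^{*}}^{w}$-type control of the tails is free when one only has finitely many nonzero coordinates to worry about — more precisely, a bounded subset of an $l_{p^{*}}$-sum all of whose coordinate projections are relatively weakly compact and whose tails $\sup_{x^{*}\in A}\sum_{k\ge n}\|P_{k}x^{*}\|^{p^{*}}$ are uniformly small is relatively weakly compact by Lemma~\ref{3.15}; and for $q<p^{*}$ the tail condition is forced. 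This last point is exactly where I expect the main work to lie, so let me say it carefully: if the tail condition failed, one would extract, as in the proof of Theorem~\ref{3.11}, a block sequence of $(x^{*}_{n})_{n}\subset A$ with $\sum_{k=p_{n}}^{q_{n}}\|P_{k}x^{*}_{n}\|^{p^{*}}>\epsilon_{0}$; the corresponding norming vectors $f_{n}\in X$ then form a sequence with pairwise disjoint supports and unit norm, hence equivalent to the unit vector basis of $l_{p}$, and in particular $(f_{n})_{n}\in l^{w}_{q}(X)$ since $q<p^{*}$ means $l_{p}\hookrightarrow$ weakly $q$-summable sequences. Then $|\langle x^{*}_{n},f_{n}\rangle|>\epsilon_{0}^{1/p^{*}}$ contradicts that $A$ is a $q$-($V$) set. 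If instead $p^{*}\le q<\infty$, a $q$-($V$) set is in particular a $p^{*}$-($V$) set, so Theorem~\ref{3.11} applies: each $P_{n}(A)$ is a $p^{*}$-($V$) set (hence a $q$-($V$) set, hence relatively weakly compact by property $q$-($V$) of $X_{n}$) \emph{and} the tail condition $\lim_{n}\sup_{x^{*}\in A}\sum_{k\ge n}\|P_{k}x^{*}\|^{p^{*}}=0$ holds outright. In both cases Lemma~\ref{3.15} (applied to the $l_{p^{*}}$-sum $X^{*}$, noting that for $X=(\sum\oplus X_{n})_{0}$ one instead has $X^{*}=(\sum\oplus X_{n}^{*})_{1}$ and uses Lemma~\ref{3.14}) yields that $A$ is relatively weakly compact. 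By Theorem~\ref{3.1}, $X$ has property $q$-($V$).

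\textbf{Main obstacle.} The delicate point is the case $1\le q<p^{*}$: there Theorem~\ref{3.12} gives no information about tails, so one must \emph{derive} the uniform tail estimate from the $q$-($V$) hypothesis by the disjoint-support/$l_{p}$-block argument sketched above, and then feed the conclusion into Lemma~\ref{3.15}. The cleanest route is to observe that relative weak compactness in $(\sum\oplus X_{n})_{p}$ (Lemma~\ref{3.15}) does \emph{not} require a tail condition at all — it is equivalent to coordinatewise relative weak compactness — so in fact once each $P_{n}(A)$ is relatively weakly compact (which Theorem~\ref{3.12} hands us directly for $q<p^{*}$, and Theorem~\ref{3.11} plus property $q$-($V$) hands us for $q\ge p^{*}$) we are done immediately, and the tail discussion is needed only in the $q\ge p^{*}$ branch where it comes for free from Theorem~\ref{3.11}. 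I would double-check which of Lemma~\ref{3.14} versus Lemma~\ref{3.15} is the relevant one for $X^{*}$ depending on whether $X$ is an $l_{p}$-sum ($p^{*}>1$, use Lemma~\ref{3.15}) or the $c_{0}$-sum ($X^{*}$ an $l_{1}$-sum, use Lemma~\ref{3.14}).
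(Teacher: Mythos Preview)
Your proposal is correct and, once you reach the ``cleanest route'' in your final paragraph, it coincides with the paper's three-line proof: Corollary~\ref{3.6} for necessity, and for sufficiency the trivial observation that each $P_{n}(A)$ is a $q$-($V$) set (no need for Theorem~\ref{3.11} or~\ref{3.12}), hence relatively weakly compact by hypothesis, hence $A$ is relatively weakly compact by Lemma~\ref{3.15} (which requires no tail condition since $1<p^{*}<\infty$). The case split on $q$ versus $p^{*}$ and the tail-condition discussion are unnecessary detours that you yourself correctly diagnose as superfluous at the end.
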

\begin{proof}
The necessary part follows from Corollary \ref{3.6}.

Conversely, let $A$ be a $q$-($V$) subset of $(\sum_{n=1}^{\infty}\oplus X^{*}_{n})_{p^{*}}$. Then each $P_{n}A$ is
also a $q$-($V$) set. By hypothesis, each $P_{n}A$ is relatively weakly compact. It follows from Lemma \ref{3.15} that $A$ is relatively weakly compact. This concludes the proof.

\end{proof}

\begin{thm}
Let $(X_{n})_{n}$ be a sequence of Banach spaces. Then $(\sum_{n=1}^{\infty}\oplus X_{n})_{0}$ has property $1$-($V$) if and only if each $X_{n}$ does.
\end{thm}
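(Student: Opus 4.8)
The plan is to mirror the structure of the preceding theorem for the $l_p$-sum, replacing Lemma~\ref{3.15} with Lemma~\ref{3.14} (adapted to the $c_0$-case) and Theorem~\ref{3.11} with the $p^*=1$ version of itself. First I would prove necessity: since $(\sum_{n=1}^\infty\oplus X_n)_0$ has property $1$-($V$), each coordinate space $X_n$ is (isometric to) a quotient of it via the canonical projection $\pi_n$, so by Corollary~\ref{3.6} each $X_n$ has property $1$-($V$).

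For sufficiency, suppose each $X_n$ has property $1$-($V$), and let $A$ be a $1$-($V$) subset of $X^*$, where $X=(\sum_{n=1}^\infty\oplus X_n)_0$ and hence $X^*=(\sum_{n=1}^\infty\oplus X_n^*)_1$. I want to show $A$ is relatively weakly compact, which by Theorem~\ref{3.1} gives property $1$-($V$) for $X$. The first step is to observe that each $P_n(A)$ is a $1$-($V$) subset of $X_n^*$: if $(y_k)_k\in c_0^w(X_n)$ then $(I_n y_k)_k\in c_0^w(X)$, and $\langle P_n x^*, y_k\rangle = \langle x^*, I_n y_k\rangle$, so $\sup_{x^*\in A}|\langle P_n x^*,y_k\rangle|\to 0$. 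By hypothesis on $X_n$ and Theorem~\ref{3.1}, each $P_n(A)$ is then relatively weakly compact.

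The second, and main, step is to establish the uniform tail condition $\lim_{n\to\infty}\sup\{\sum_{k=n}^\infty\|P_k x^*\|:x^*\in A\}=0$; once this is in hand, Lemma~\ref{3.14} applied to the $l_1$-sum $X^*=(\sum_{n=1}^\infty\oplus X_n^*)_1$ finishes the argument. This is exactly the $p^*=1$ analogue of the implication $(1)\Rightarrow(2)$ in Theorem~\ref{3.11}, and I expect to argue by contradiction along the same lines: if the tails are not uniformly small, pick $\epsilon_0>0$, blocks $p_n<q_n<p_{n+1}$, and $x_n^*\in A$ with $\sum_{k=p_n}^{q_n}\|P_k x_n^*\|>\epsilon_0$, then use Hahn--Banach to choose norming elements $x_k^{(n)}\in X_k$ with $\max_{p_n\le k\le q_n}\|x_k^{(n)}\|\le 1$ and $\sum_{k=p_n}^{q_n}\langle P_k x_n^*, x_k^{(n)}\rangle>\epsilon_0$, assembling them into $f_n\in X$ supported on $[p_n,q_n]$. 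Because the supports are disjoint finite blocks and each $\|\pi_k f_n\|\le 1$, the sequence $(f_n)_n$ is weakly null in the $c_0$-sum (for $x^*\in X^*$, $|\langle x^*,f_n\rangle|\le\sum_{k=p_n}^{q_n}\|P_k x^*\|\to 0$ as $n\to\infty$ since $x^*\in l_1$-sum), i.e.\ $(f_n)_n\in c_0^w(X)$; but then the $1$-($V$) property of $A$ forces $|\langle x_n^*,f_n\rangle|\le\sup_{x^*\in A}|\langle x^*,f_n\rangle|\to 0$, contradicting $\langle x_n^*,f_n\rangle>\epsilon_0$.

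The main obstacle is purely bookkeeping: one must be careful that the relevant sequence space for the $c_0$-sum case is $c_0^w(X)$ (weakly null sequences) rather than $l_q^w(X)$, so the ``test sequence'' $(f_n)_n$ only needs to be weakly null, and verifying weak nullity reduces to the absolute convergence of $\sum_k\|P_k x^*\|$ for each fixed $x^*$ in the $l_1$-dual, which is immediate. With the tail estimate and coordinatewise relative weak compactness both established, Lemma~\ref{3.14} yields that $A$ is relatively weakly compact, completing the proof.
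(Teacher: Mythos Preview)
Your approach is essentially the same as the paper's: the paper invokes Theorem~\ref{3.11} (in its $c_0$-sum incarnation, where $p^{*}=1$) to obtain both that each $P_n(A)$ is a $1$-($V$) set and the uniform tail condition, then applies Lemma~\ref{3.14}; you re-derive the content of Theorem~\ref{3.11} inline rather than citing it, but the argument is identical in substance.

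One correction is needed, however: you repeatedly say the test sequences for a $1$-($V$) set lie in $c_0^w(X)$ (weakly null), but by the paper's definition a $1$-($V$) set is tested against $l^w_1(X)$, i.e.\ weakly $1$-summable sequences $(f_n)_n$ with $\sum_n|\langle x^{*},f_n\rangle|<\infty$ for every $x^{*}$. Fortunately your own estimate already gives this: from $|\langle x^{*},f_n\rangle|\le\sum_{k=p_n}^{q_n}\|P_k x^{*}\|$ and the disjointness of the blocks one has $\sum_n|\langle x^{*},f_n\rangle|\le\sum_k\|P_k x^{*}\|=\|x^{*}\|$, so $(f_n)_n\in l^w_1(X)$ with $\|(f_n)_n\|_1^w\le 1$, and the $1$-($V$) property of $A$ then applies as you intended. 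The same remark applies to your verification that $P_n(A)$ is a $1$-($V$) set: replace $(y_k)_k\in c_0^w(X_n)$ by $(y_k)_k\in l^w_1(X_n)$; the rest is unchanged. With this relabeling the proof is complete and matches the paper.
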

\begin{proof}
The necessary part follows from Corollary \ref{3.6}.

Conversely, assume that $A$ is a $1$-($V$) subset of $(\sum_{n=1}^{\infty}\oplus X^{*}_{n})_{1}$. By Theorem \ref{3.11},
each $P_{n}(A)$ is a $1$-($V$) set and $$\lim_{n\rightarrow \infty}\sup\{\sum_{k=n}^{\infty}\|P_{k}x^{*}\|:x^{*}\in A\}=0.$$
By the assumption, each $P_{n}A$ is relatively weakly compact. It follows from Lemma \ref{3.14} that $A$ is relatively weakly compact. We are done.

\end{proof}

Combining Theorem \ref{3.4} with the same argument as Theorem \ref{3.11}, we obtain the similar result for the $p$-($V^{*}$) sets.

\begin{thm}\label{3.100}
Let $(X_{n})_{n}$ be a sequence of Banach spaces. Let $A$ be a bounded subset of $X=(\sum_{n=1}^{\infty}\oplus X_{n})_{p}(1\leq p<\infty)$. The following assertions are equivalent:
\item[(1)]$A$ is a $p$-($V^{*}$) set;
\item[(2)]$\pi_{n}(A)$ is a $p$-($V^{*}$) set for each $n\in \mathbb{N}$ and $$\lim_{n\rightarrow \infty}\sup\{\sum_{k=n}^{\infty}\|\pi_{k}x\|^{p}:x\in A\}=0.$$
\end{thm}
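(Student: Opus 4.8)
The plan is to mimic the structure of the proof of Theorem \ref{3.11}, but working with the predual side (vectors in $X$ and functionals via duality with $(\sum \oplus X_n^*)_{p^*}$) and invoking Theorem \ref{3.4} in place of Theorem \ref{3.8}. The implication $(2)\Rightarrow(1)$ should be routine: given a weakly $p$-summable sequence $(x_n^*)_n$ in $X^* = (\sum \oplus X_k^*)_{p^*}$, use the tail condition in (2) to approximate each $x_n^*$ uniformly by its truncation $\sum_{k=1}^{n_0} J_k P_k x_n^*$, and then handle each of the finitely many coordinates separately: since $\pi_k(A)$ is a $p$-($V^*$) set in $X_k$ and $(P_k x_n^*)_n$ is weakly $p$-summable in $X_k^*$, one gets $\sup_{x\in A}|\langle P_k x_n^*,\pi_k x\rangle|\to 0$ for each fixed $k\le n_0$. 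Summing the finitely many coordinate contributions plus the uniformly small tail gives $\sup_{x\in A}|\langle x_n^*,x\rangle|\to 0$, so $A$ is a $p$-($V^*$) set.

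For $(1)\Rightarrow(2)$: that each $\pi_n(A)$ is a $p$-($V^*$) set is immediate, since $\pi_n = I_n^{**}\!\restriction$-type coordinate map and any weakly $p$-summable sequence $(x_n^*)_n$ in $X_n^*$ pulls back via $J_n$ to a weakly $p$-summable sequence in $X^*$ (the embedding $J_n$ is isometric). The substantive part is the tail estimate. Suppose it fails; then by a gliding-hump/induction argument exactly as in Theorem \ref{3.11} one extracts $\epsilon_0>0$, blocks $p_n < q_n < p_{n+1}$, and $x^{(n)}\in A$ with $\sum_{k=p_n}^{q_n}\|\pi_k x^{(n)}\|^p > \epsilon_0$. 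For each $n$, by Hahn--Banach pick functionals $x^{*(n)}_k\in X_k^*$ supported on $p_n\le k\le q_n$ with $\sum_{k=p_n}^{q_n}\|x^{*(n)}_k\|^{p^*}=1$ and $\sum_{k=p_n}^{q_n}\langle x^{*(n)}_k,\pi_k x^{(n)}\rangle = \bigl(\sum_{k=p_n}^{q_n}\|\pi_k x^{(n)}\|^p\bigr)^{1/p} > \epsilon_0^{1/p}$. Assemble $g_n\in X^* = (\sum\oplus X_k^*)_{p^*}$ by $P_k g_n = x^{*(n)}_k$ for $p_n\le k\le q_n$ and $0$ otherwise. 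Because the $g_n$ have pairwise disjoint (consecutive) supports and each has norm $1$ in the $\ell_{p^*}$-sum, $(g_n)_n$ is isometrically equivalent to the unit vector basis of $\ell_{p^*}$, hence weakly $p$-summable in $X^*$. Then $(1)$ forces $\sup_{x\in A}|\langle g_n,x\rangle|\to 0$, but $|\langle g_n,x^{(n)}\rangle| > \epsilon_0^{1/p}$ for all $n$ — a contradiction.

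The one point requiring a little care is verifying that a normalized block sequence $(g_n)_n$ in $(\sum\oplus X_k^*)_{p^*}$ with consecutive supports is weakly $p$-summable: for $x=(x_k)_k\in X$ one has $\sum_n |\langle g_n,x\rangle|^p = \sum_n \bigl|\sum_{k=p_n}^{q_n}\langle P_k g_n,x_k\rangle\bigr|^p \le \sum_n \bigl(\sum_{k=p_n}^{q_n}\|x_k\|^p\bigr)\bigl(\sum_{k=p_n}^{q_n}\|P_k g_n\|^{p^*}\bigr)^{p/p^*} \le \sum_k \|x_k\|^p = \|x\|^p$ by Hölder and $\|g_n\|_{p^*}=1$, so $(g_n)_n\in\ell_p^w(X^*)$ with $\|(g_n)_n\|_p^w\le 1$ — entirely parallel to the computation in Theorem \ref{3.11}. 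I do not expect a genuine obstacle here; the main bookkeeping burden is simply organizing the two-index induction (over $n$ for the mass on blocks, over $k$ for the convergence $\langle P_k x_n^*,\pi_k x\rangle\to 0$) cleanly, and keeping straight which of the two dual pairings $X^{**}\!-\!X^*$ versus $X^*\!-\!X$ is in play — but since $p<\infty$ the space $X$ has $X^* = (\sum\oplus X_k^*)_{p^*}$ with the $\pi_k,P_k$ identifications already recorded before Theorem \ref{3.11}, so no subtlety about second duals arises.
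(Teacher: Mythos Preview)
Your proposal is correct and follows the route the paper indicates (dualize Theorem~\ref{3.11}, with Theorem~\ref{3.4} playing the role of Theorem~\ref{3.8}); your direct argument for $(2)\Rightarrow(1)$ via the definition is a harmless variant of the operator-characterization approach the paper has in mind. One small caveat: membership $(g_n)\in\ell_p^w(X^*)$ must be tested against all $x^{**}\in X^{**}$, not merely $x\in X$---for $1<p<\infty$ your H\"older estimate extends verbatim since $X^{**}=(\sum\oplus X_k^{**})_p$, and for $p=1$ your observation that the disjointly supported norm-one $(g_n)$ are isometrically equivalent to the $c_0$ basis already does the job (weak $p$-summability passes to superspaces), so your claim that ``no subtlety about second duals arises'' is essentially right but deserves this one line of justification.
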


\begin{thm}
Let $(X_{n})_{n}$ be a sequence of Banach spaces and $1\leq q<p<\infty$. Let $A$ be a bounded subset of $X=(\sum_{n=1}^{\infty}\oplus X_{n})_{p}$ or $X=(\sum_{n=1}^{\infty}\oplus X_{n})_{0}$. Then $A$ is a $q$-($V^{*}$) set if and only if each $\pi_{n}(A)$ does.
\end{thm}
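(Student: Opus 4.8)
The plan is to use the description of $q$-($V^{*}$) sets furnished by Theorem~\ref{3.4}(3), which is valid for $1<q<\infty$ and, for $q=1$, is the result of Emmanuele recalled right after Theorem~\ref{3.4}: a bounded subset $K$ of a Banach space $Z$ is a $q$-($V^{*}$) set if and only if $T(K)$ is relatively norm compact for every operator $T:Z\to l_{q}$. Necessity of the condition is then immediate, since $\pi_{n}:X\to X_{n}$ is an operator and operators carry $q$-($V^{*}$) sets to $q$-($V^{*}$) sets: if $(y^{*}_{m})_{m}\in l^{w}_{q}(X^{*}_{n})$ then $(\pi^{*}_{n}y^{*}_{m})_{m}\in l^{w}_{q}(X^{*})$, and since $A$ is a $q$-($V^{*}$) set, $\sup_{z\in\pi_{n}(A)}|\langle y^{*}_{m},z\rangle|=\sup_{x\in A}|\langle\pi^{*}_{n}y^{*}_{m},x\rangle|\to0$.

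For the sufficiency the crucial ingredient — and the only place the hypothesis $q<p$ is used — is the estimate $\lim_{N\to\infty}\|TP_{>N}\|=0$ for every operator $T:X\to l_{q}$, where $P_{\le N}:=\sum_{n=1}^{N}I_{n}\pi_{n}$ and $P_{>N}:=I_{X}-P_{\le N}$. I would argue by contradiction. The numbers $\|TP_{>N}\|$ are non-increasing in $N$, so if the limit is positive there is $\delta>0$ with $\|TP_{>N}\|\ge\delta$ for all $N$. Setting $L_{0}=0$ and using $\|TP_{>L_{j-1}}\|\ge\delta$, pick $u\in B_{X}$ supported on the coordinates beyond $L_{j-1}$ with $\|Tu\|\ge\delta/2$; discarding a small tail of $u$ gives $v_{j}\in B_{X}$ supported on a finite block $(L_{j-1},L_{j}]$ with $\|Tv_{j}\|\ge\delta/4$, and iterating produces $0=L_{0}<L_{1}<\cdots$ and such $v_{j}$ with pairwise disjoint supports. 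Then $\Psi:l_{p}\to X$ (or $c_{0}\to X$ in the $c_{0}$-sum case) given by $\Psi e_{j}=v_{j}$ is bounded by disjointness of the supports, $v_{j}\to0$ weakly (a normalized block sequence in an $l_{p}$- or $c_{0}$-sum is weakly null), and $T\Psi:l_{p}\to l_{q}$ (resp.\ $c_{0}\to l_{q}$) is compact by Pitt's theorem, since $q<p<\infty$ (resp.\ $q<\infty$). Hence $(Tv_{j})_{j}=(T\Psi e_{j})_{j}$ has a norm-convergent subsequence whose limit has norm $\ge\delta/4>0$, contradicting $Tv_{j}\to0$ weakly.

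With the estimate in hand, sufficiency is short. Assume every $\pi_{n}(A)$ is a $q$-($V^{*}$) set and fix $T:X\to l_{q}$. For each $n$, the operator $TI_{n}:X_{n}\to l_{q}$ sends the $q$-($V^{*}$) set $\pi_{n}(A)$ to a relatively norm compact set by Theorem~\ref{3.4}(3), so for every $N$ the set $TP_{\le N}(A)\subseteq\sum_{n=1}^{N}TI_{n}(\pi_{n}(A))$ is relatively norm compact, being contained in a finite Minkowski sum of relatively norm compact sets. Since $\sup_{x\in A}\|Tx-TP_{\le N}x\|\le\|TP_{>N}\|\cdot\sup_{x\in A}\|x\|\to0$, the set $T(A)$ is approximated uniformly by the totally bounded sets $TP_{\le N}(A)$, hence is itself totally bounded, i.e.\ relatively norm compact. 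As this holds for every $T:X\to l_{q}$, Theorem~\ref{3.4}(3) shows that $A$ is a $q$-($V^{*}$) set.

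I expect the tail estimate $\|TP_{>N}\|\to0$ to be the heart of the matter: it is precisely where $q<p$ enters, and it is what makes the extra "tail condition" required in Theorem~\ref{3.100} for the borderline exponent $p=q$ unnecessary here. The points demanding care are the gliding-hump step — one must chop each norming vector to a finite block so that the $v_{j}$ are genuinely disjointly supported — and the observation that Pitt's theorem covers operators out of $c_{0}$ as well, which is what lets the same argument treat $X=(\sum_{n=1}^{\infty}\oplus X_{n})_{0}$.
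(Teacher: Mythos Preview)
Your proof is correct. The necessity is routine, and for the sufficiency your key lemma --- that $\|TP_{>N}\|\to0$ for every operator $T:X\to l_{q}$ whenever $q<p$ (or $X$ is the $c_{0}$-sum) --- is established cleanly by the gliding-hump-plus-Pitt argument you describe; once that is in hand, the approximation of $T(A)$ by the totally bounded sets $TP_{\le N}(A)$ gives relative compactness of $T(A)$, and Theorem~\ref{3.4}(3) (Emmanuele's version for $q=1$) finishes. One cosmetic remark: your phrasing ``norm-convergent subsequence whose limit has norm $\ge\delta/4$'' really means that any norm limit of $(Tv_{j})_{j}$ must have norm $\ge\delta/4$, which contradicts $Tv_{j}\to0$ weakly; and ``normalized block sequence'' should be ``bounded block sequence''. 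Neither affects the validity.

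Your route, however, differs from the one the paper indicates. The paper says to mimic Theorem~\ref{3.12}: assume $A$ is not a $q$-($V^{*}$) set, pick a weakly $q$-summable sequence $(x^{*}_{n})_{n}$ in $X^{*}$ and $(x_{n})_{n}$ in $A$ with $|\langle x^{*}_{n},x_{n}\rangle|>\epsilon_{0}$, then use the hypothesis on each $\pi_{k}(A)$ to run a gliding hump and truncate each $x_{n_{j}}$ to a block $y_{j}$ with disjoint supports so that $|\langle x^{*}_{n_{j}},y_{j}\rangle|>\epsilon_{0}/2$. The disjointly supported, semi-normalized $(y_{j})_{j}$ is equivalent to the unit basis of $l_{p}$ (or $c_{0}$), Pitt's theorem and Theorem~\ref{3.4} show that $\{y_{j}:j\in\mathbb{N}\}$ is itself a $q$-($V^{*}$) set, and this contradicts $|\langle x^{*}_{n_{j}},y_{j}\rangle|>\epsilon_{0}/2$. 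So the paper's argument runs the gliding hump on the specific sequences coming from the failure of the $q$-($V^{*}$) condition, whereas you isolate the operator-theoretic fact $\|TP_{>N}\|\to0$ as a standalone lemma, independent of $A$, and then finish by approximation in the spirit of the $(2)\Rightarrow(1)$ step of Theorem~\ref{3.11}. Both use the same ingredients; your version has the advantage of producing a reusable structural statement about operators $X\to l_{q}$, while the paper's parallels Theorem~\ref{3.12} more transparently.
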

The proof is similar to Theorem \ref{3.12}, only interchanging the role of $X$ and $X^{*}$ and replacing Theorem \ref{3.8} by Theorem \ref{3.4}.

\begin{thm}\label{3.13}
Let $(X_{n})_{n}$ be a sequence of Banach spaces, $1\leq q<\infty$, $1<p<\infty$ and let $X=(\sum_{n=1}^{\infty}\oplus X_{n})_{p}$ or $X=(\sum_{n=1}^{\infty}\oplus X_{n})_{0}$. Then $X$ has property $q$-($V^{*}$) if and only if each $X_{n}$ does.
\end{thm}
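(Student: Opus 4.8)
The plan is to prove both implications by reducing to the coordinate spaces $X_{n}$, using on the one hand that relative weak compactness in $X=(\sum_{n}\oplus X_{n})_{p}$ ($1<p<\infty$) and in $X=(\sum_{n}\oplus X_{n})_{0}$ is detected coordinatewise (Lemma~\ref{3.15}), and on the other hand that $q$-($V^{*}$) sets are stable under bounded operators.

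For the necessity I would observe that for each $n$ the canonical injection $I_{n}\colon X_{n}\to X$ is an isometric embedding, so $I_{n}(X_{n})$ is a closed subspace of $X$ isometrically isomorphic to $X_{n}$. Since property $q$-($V^{*}$) passes to closed subspaces by Corollary~\ref{3.10} and is plainly an isomorphic invariant, each $X_{n}$ inherits property $q$-($V^{*}$) from $X$.

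For the sufficiency, assume each $X_{n}$ has property $q$-($V^{*}$) and let $A$ be a $q$-($V^{*}$) subset of $X$; in particular $A$ is bounded. The first step is to check that $\pi_{n}(A)\subset X_{n}$ is again a $q$-($V^{*}$) set for every $n$: if $(x^{*}_{k})_{k}\in l^{w}_{q}(X^{*}_{n})$ then, $\pi_{n}$ being a bounded operator, $(\pi^{*}_{n}x^{*}_{k})_{k}\in l^{w}_{q}(X^{*})$ (adjoints of bounded operators preserve weak $q$-summability), and hence
$$\sup_{y\in\pi_{n}(A)}|<x^{*}_{k},y>|=\sup_{x\in A}|<\pi^{*}_{n}x^{*}_{k},x>|\longrightarrow 0\qquad(k\to\infty)$$
because $A$ is a $q$-($V^{*}$) set. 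By the hypothesis on $X_{n}$ each $\pi_{n}(A)$ is relatively weakly compact, and Lemma~\ref{3.15} then gives that $A$ is relatively weakly compact. Thus every $q$-($V^{*}$) subset of $X$ is relatively weakly compact, i.e.\ $X$ has property $q$-($V^{*}$).

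I do not expect a genuine obstacle here: the argument is the exact dual of the proof of the $q$-($V$) analogue stated above (with Corollary~\ref{3.6} and the projections $P_{n}$ replaced by Corollary~\ref{3.10} and the projections $\pi_{n}$), and the single point that deserves a line of justification is the behaviour of $q$-($V^{*}$) sets under coordinate projections, which holds for every $q$ so that no case distinction among $q<p$, $q=p$, $q>p$ is needed. It is worth stressing that the finer Theorem~\ref{3.100} is not needed: since weak compactness in these direct sums is already coordinatewise, one never has to control the tail term $\sup_{x\in A}\sum_{k\geq n}\|\pi_{k}x\|^{p}$. This is also why the statement is confined to $1<p<\infty$ together with the $c_{0}$-sum: for the $l_{1}$-sum Lemma~\ref{3.15} must be replaced by Lemma~\ref{3.14}, whose extra requirement $\lim_{n}\sup_{x\in A}\sum_{k\geq n}\|\pi_{k}x\|=0$ would then have to be verified separately.
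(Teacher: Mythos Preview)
Your proof is correct and follows essentially the same route as the paper's: necessity via Corollary~\ref{3.10}, sufficiency by noting that each $\pi_{n}(A)$ is a $q$-($V^{*}$) set, applying the hypothesis on $X_{n}$, and concluding with Lemma~\ref{3.15}. The paper records the fact that $\pi_{n}(A)$ is a $q$-($V^{*}$) set as ``clearly'', whereas you spell it out; otherwise the arguments coincide.
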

\begin{proof}
The necessary part follows from Corollary \ref{3.10}.

Conversely, let $A$ be a $q$-($V^{*}$) subset of $X$. Clearly, each $\pi_{n}(A)$ is a $q$-($V^{*}$) subset of $X_{n}$. By the assumption, each $\pi_{n}(A)$ is relatively weakly compact. It follows from Lemma \ref{3.15} that $A$ is relatively weakly compact. Therefore, $X$ has property $q$-($V^{*}$).

\end{proof}

\begin{thm}\label{3.201}
Let $(X_{n})_{n}$ be a sequence of Banach spaces and $1\leq p<\infty$. Then $(\sum_{n=1}^{\infty}\oplus X_{n})_{p}$ has property $p$-($V^{*}$) if and only if so does each $X_{n}$.
\end{thm}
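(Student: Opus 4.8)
The plan is to treat the two implications separately, with the substance lying entirely in the sufficiency and, within that, in the case $p=1$.

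For the necessity, I would simply observe that for each $n$ the canonical injection $I_{n}$ identifies $X_{n}$ isometrically with a closed subspace of $X:=(\sum_{n=1}^{\infty}\oplus X_{n})_{p}$, so Corollary \ref{3.10} forces each $X_{n}$ to inherit property $p$-($V^{*}$) from $X$.

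For the sufficiency, assume every $X_{n}$ has property $p$-($V^{*}$) and let $A$ be a $p$-($V^{*}$) subset of $X$. The first step is to apply Theorem \ref{3.100}: from the fact that $A$ is a $p$-($V^{*}$) set we extract two pieces of information, namely that each coordinate projection $\pi_{n}(A)$ is a $p$-($V^{*}$) subset of $X_{n}$, and that the uniform tail estimate $\lim_{n\to\infty}\sup\{\sum_{k=n}^{\infty}\|\pi_{k}x\|^{p}:x\in A\}=0$ holds. By the hypothesis on $X_{n}$, each $\pi_{n}(A)$ is then relatively weakly compact in $X_{n}$. The second step is to lift relative weak compactness from the coordinates back to $A$. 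If $1<p<\infty$ this is immediate from Lemma \ref{3.15}, which needs nothing beyond relative weak compactness of every $\pi_{n}(A)$. If $p=1$, Lemma \ref{3.15} is unavailable and the projections alone do not suffice; instead one invokes Lemma \ref{3.14}, whose hypotheses are exactly "$\pi_{n}(A)$ relatively weakly compact for every $n$, together with $\lim_{n}\sup\{\sum_{k\ge n}\|\pi_{k}x\|:x\in A\}=0$" — and that tail condition is precisely the one delivered by Theorem \ref{3.100} in the case $p=1$. In both cases $A$ comes out relatively weakly compact, so $X$ has property $p$-($V^{*}$).

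The point that needs care, and the reason this statement is recorded separately from Theorem \ref{3.13}, is the diagonal case $q=p$, and in particular $p=1$: there the characterization of relative weak compactness genuinely requires the uniform tail condition, so one cannot argue coordinatewise alone. The whole content of the proof is the remark that the description of $p$-($V^{*}$) sets in Theorem \ref{3.100} carries exactly the additional tail information that Lemma \ref{3.14} demands. I do not anticipate any real obstacle once Theorem \ref{3.100} and Lemmas \ref{3.14}--\ref{3.15} are in hand; the argument is short and mostly a matter of matching hypotheses.
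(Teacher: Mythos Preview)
Your proposal is correct and follows essentially the same route as the paper: necessity via Corollary \ref{3.10}, and sufficiency by combining Theorem \ref{3.100} with Lemma \ref{3.15} for $1<p<\infty$ and Lemma \ref{3.14} for $p=1$. Your remark that the tail condition from Theorem \ref{3.100} is exactly what Lemma \ref{3.14} needs in the $p=1$ case is precisely the point of the argument.
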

\begin{proof}
The necessary part follows from Corollary \ref{3.10}.

Conversely, let $A$ be a $p$-($V^{*}$) set. It follows from Theorem \ref{3.100} that each $\pi_{n}(A)$ is a $p$-($V^{*}$) set and $$\lim_{n\rightarrow \infty}\sup\{\sum_{k=n}^{\infty}\|\pi_{k}x\|^{p}:x\in A\}=0.$$
Since each $X_{n}$ has property $p$-($V^{*}$), each $\pi_{n}(A)$ is relatively weakly compact.
For $1<p<\infty$, it follows from Lemma \ref{3.15} that $A$ is relatively weakly compact. For $p=1$, Lemma \ref{3.14} yields that $A$ is relatively weakly compact.
Thus, in both cases, $A$ is relatively weakly compact. This concludes the proof.

\end{proof}

\section{Quantifying Pe{\l}czy\'{n}ski's property ($V$) of order $p$ and Pe{\l}czy\'{n}ski's property ($V^{*}$) of order $p$}
We will need several measures of weak non-compactness.
Let $A$ be a bounded subset of a Banach space $X$.
Other commonly used quantities measuring weak non-compactness are:
\begin{center}
$wk_{X}(A)=\widehat{d}(\overline{A}^{w^{*}},X),$ where $\overline{A}^{w^{*}}$ denotes the $weak^{*}$ closure of $A$ in $X^{**}$.
\end{center}

\begin{center}
$wck_{X}(A)=\sup\{d(clust_{X^{**}}((x_{n})_{n}),X):(x_{n})_{n}$ is a sequence in $A\}$, where $clust_{X^{**}}((x_{n})_{n})$ is the set of all $weak^{*}$ cluster points in $X^{**}$ of $(x_{n})_{n}$.
\end{center}

\begin{center}
$\gamma_{X}(A)=\sup\{|\lim_{n}\lim_{m}<x^{*}_{m},x_{n}>-\lim_{m}\lim_{n}<x^{*}_{m},x_{n}>|:(x_{n})_{n}$ is a sequence in $A$, $(x^{*}_{m})_{m}$ is a sequence in $B_{X^{*}}$ and all the involved limits exist$\}$.
\end{center}
It follows from \cite[Theorem 2.3]{AC} that for any bounded subset $A$ of a Banach space $X$ we have
\begin{equation}\label{8}
wck_{X}(A)\leq wk_{X}(A)\leq \gamma_{X}(A)\leq 2wck_{X}(A),
\end{equation}
$$wk_{X}(A)\leq \omega(A).$$
For an operator $T: X\rightarrow Y$, $\omega(T),wk_{Y}(T),wck_{Y}(T),\gamma_{Y}(T)$ will denote $\omega(TB_{X}),wk_{Y}(TB_{X})$,
$wck_{Y}(TB_{X})$ and $\gamma_{Y}(TB_{X})$, respectively.
C. Angosto and B. Cascales(\cite{AC})proved the following inequality:
\begin{center}
$\gamma_{Y}(T)\leq \gamma_{X^{*}}(T^{*})\leq 2\gamma_{Y}(T)$, for any operator $T$.
\end{center}
So,putting these inequalities together, we get,for any operator $T$,
\begin{equation}\label{1}
{\frac{1}{2}}wk_{Y}(T)\leq wk_{X^{*}}(T^{*})\leq 4wk_{Y}(T).
\end{equation}
For an operator $T: X\rightarrow Y$ and $1\leq p\leq\infty$. We set
\begin{center}
$uc_{p}(T)=\sup\{\limsup_{n}\|Tx_{n}\|: (x_{n})_{n}\in l^{w}_{p}(X), \|(x_{n})_{n}\|_{p}^{w}\leq 1\},$
\end{center}

We begin this section with a simple lemma in \cite{CCL}, which will be used frequently.
\begin{lem}\cite{CCL}\label{5.4}
Let $X$ be a closed subspace of a Banach space $Y$ and let $A$ be a bounded subset of $X$. Then
\begin{equation}\label{2}
wk_{Y}(A)\leq wk_{X}(A)\leq 2wk_{Y}(A).
\end{equation}
\end{lem}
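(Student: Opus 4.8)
The plan is to prove Lemma \ref{5.4}: for a closed subspace $X$ of a Banach space $Y$ and a bounded subset $A\subseteq X$, one has $wk_Y(A)\le wk_X(A)\le 2\,wk_Y(A)$. The underlying mechanism is the canonical isometric embedding $X^{**}\hookrightarrow Y^{**}$ together with the relationship between the $weak^*$-closures of $A$ computed in $X^{**}$ and in $Y^{**}$, and between the distances from a point of $X^{**}$ to $X$ versus to $Y$.

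First I would recall that $X^{**}$ is $weak^*$-closed in $Y^{**}$ is \emph{false} in general, but what is true is that $\overline{A}^{w^*(X^{**})}=\overline{A}^{w^*(Y^{**})}\cap X^{**}$. Indeed, the restriction of the $weak^*$-topology of $Y^{**}$ to $X^{**}$ coincides with the $weak^*$-topology of $X^{**}$ (this is just the Hahn--Banach fact that every $x^*\in X^*$ extends to $Y^*$, so $X^*$ and $Y^*|_X$ induce the same topology on $X^{**}$), and $A\subseteq X^{**}$, so the two $weak^*$-closures of $A$ differ only by whether we intersect with $X^{**}$ or not. In particular $\overline{A}^{w^*(Y^{**})}\supseteq \overline{A}^{w^*(X^{**})}$ but also $\overline{A}^{w^*(Y^{**})}$ may contain points of $Y^{**}\setminus X^{**}$.

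For the lower bound $wk_Y(A)\le wk_X(A)$: for any $u\in \overline{A}^{w^*(X^{**})}\subseteq \overline{A}^{w^*(Y^{**})}$ we have $d(u,Y)\le d(u,X)$ since $X\subseteq Y$, and taking the supremum over such $u$ together with the inclusion of closures gives $wk_Y(A)=\widehat{d}(\overline{A}^{w^*(Y^{**})},Y)\ge$ — wait, the inequality goes the right way only after care: every point of $\overline{A}^{w^*(Y^{**})}$ must be shown close to $Y$. So I would argue: let $u\in\overline{A}^{w^*(Y^{**})}$. Since $A\subseteq X$, the $weak^*$-closure of $A$ in $Y^{**}$ is contained in $\overline{B_X(0,\sup_{a\in A}\|a\|)}^{w^*}$, but more to the point, I claim $\overline{A}^{w^*(Y^{**})}\subseteq \overline{A}^{w^*(X^{**})} + $ nothing extra is needed — actually the clean route is: take $u$ in the $Y^{**}$-closure; it is a $weak^*$-limit of a net from $A\subseteq X^{**}$; since on bounded sets one can restrict attention to $X^{**}$-evaluations, $u$ restricted to $X^*$ lies in $\overline{A}^{w^*(X^{**})}=:v$, and $\|u-v\|_{Y^{**}}$ is controlled. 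This is exactly the standard argument; I expect the bookkeeping of "which closure, which distance" to be the main obstacle, and I would resolve it by: (i) for the left inequality, noting $\overline{A}^{w^*(X^{**})}\subseteq\overline{A}^{w^*(Y^{**})}$ and $d(\cdot,Y)\le d(\cdot,X)$, which gives $wk_X(A)=\widehat d(\overline A^{w^*(X^{**})},X)\ge \widehat d(\overline A^{w^*(X^{**})},Y)\ge \dots$ — no; the correct direction uses that to estimate $wk_Y(A)$ we need \emph{every} $Y^{**}$-cluster point close to $Y$, so instead I would use the inequality $wk_Y(A)\le wck_Y(A)\cdot\text{const}$ is not available; rather I use directly that any $weak^*$ cluster point in $Y^{**}$ of a sequence in $A$ is also one in $X^{**}$ (bounded sets, $X^*$ separates), hence $d(u,Y)\le d(u,X)\le wk_X(A)$, yielding $wk_Y(A)\le wk_X(A)$.

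For the upper bound $wk_X(A)\le 2\,wk_Y(A)$: given $v\in \overline{A}^{w^*(X^{**})}$, we have $d(v,Y)\le wk_Y(A)$, so pick $y\in Y$ with $\|v-y\|\le wk_Y(A)+\varepsilon$. The issue is $y$ need not lie in $X$. But $v\in X^{**}$ and $y\in Y$ with $v-y$ small in $Y^{**}$; projecting, the distance $d(v,X)$ is at most $\|v-y\|+d(y,X)$, and since $v\in X^{**}$ acts as a functional vanishing appropriately, one shows $d(y,X)\le \|v-y\|$ as well (because $v$ itself is within $\|v-y\|$ of $y$ and $v$ "belongs to $X^{**}$", so $y$ is within $\|v-y\|$ of the subspace $X$ — formally, for $x^*\in X^\perp\subseteq Y^*$ with $\|x^*\|\le 1$, $|\langle x^*,y\rangle|=|\langle x^*,y-v\rangle|\le\|v-y\|$ since $\langle x^*,v\rangle=0$ as $v\in X^{**}$, hence $d(y,X)=\sup_{x^*\in X^\perp\cap B_{Y^*}}|\langle x^*,y\rangle|\le \|v-y\|$). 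Combining, $d(v,X)\le 2\|v-y\|\le 2\,wk_Y(A)+2\varepsilon$, and letting $\varepsilon\to0$ gives the result. I expect the factor-$2$ step (controlling $d(y,X)$ via the annihilator $X^\perp$) to be the crux; everything else is routine manipulation of $weak^*$-topologies on bounded sets.
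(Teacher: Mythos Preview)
The paper does not prove this lemma; it merely quotes it from \cite{CCL}. So there is no ``paper's own proof'' to compare against, and I can only assess your argument on its own.

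Your factor-$2$ argument for $wk_X(A)\le 2\,wk_Y(A)$ is correct and is the standard one: given $v\in\overline{A}^{w^*(X^{**})}$ and $y\in Y$ with $\|v-y\|\le wk_Y(A)+\varepsilon$, the annihilator computation $d(y,X)=\sup\{|\langle x^*,y\rangle|:x^*\in X^\perp\cap B_{Y^*}\}=\sup\{|\langle x^*,y-v\rangle|\}\le\|v-y\|$ (using $v\in X^{**}=(X^\perp)^\perp$) gives $d(v,X)\le 2\|v-y\|$.

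However, your opening assertion that ``$X^{**}$ is $weak^*$-closed in $Y^{**}$ is \emph{false} in general'' is itself false: the canonical image of $X^{**}$ in $Y^{**}$ is exactly $(X^\perp)^\perp$, an annihilator, hence $w^*$-closed. This misstatement is what causes the meandering in your treatment of the left inequality. Once you accept $w^*$-closedness, $\overline{A}^{w^*(Y^{**})}\subseteq X^{**}$ is immediate, so $\overline{A}^{w^*(Y^{**})}=\overline{A}^{w^*(X^{**})}$ outright (not merely after intersecting with $X^{**}$), and then $wk_Y(A)=\sup_{u\in\overline{A}^{w^*}}d(u,Y)\le \sup_{u\in\overline{A}^{w^*}}d(u,X)=wk_X(A)$ in one line. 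Your eventual argument (``any $w^*$ cluster point in $Y^{**}$ \ldots\ is also one in $X^{**}$'') tacitly uses this very $w^*$-closedness you denied, and the word ``sequence'' there should be ``net'' or simply ``point of the $w^*$-closure'', since $wk$ is defined via the full closure, not sequential cluster points.
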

It is worth mentioning that the constant 2 in the right inequality of (\ref{2}) is optimal. Indeed, let $X=c_{0}, Y=l_{\infty}$ and $A$ be the summing basis of $c_{0}$. It is easy to check that $wk_{X}(A)=1$ and $wk_{Y}(A)=\frac{1}{2}$.

\begin{defn}
Let $1\leq p\leq\infty$. We say that a Banach space $X$ has \textit{quantitative Pe{\l}czy\'{n}ski's property $(V^{*})$ of order $p$} (property $p$-$(V^{*})_{q}$ in short) with a constant $C>0$ if for every Banach space $Y$ and every operator $T: Y\rightarrow X$, one has $$wk_{X}(T)\leq C\cdot uc_{p}(T^{*}).$$
We say that a Banach space $X$ has property $p$-$(V^{*})_{q}$ if it has property $p$-$(V^{*})_{q}$ with some constant $C$.
\end{defn}

Let $1\leq p\leq\infty$ and $X$ be a Banach space. For a bounded subset $A$ of $X$, we set
$$\iota_{p}(A)=\sup\{\limsup_{n}\sup_{x\in A}|<x^{*}_{n},x>|: (x^{*}_{n})_{n}\in l^{w}_{p}(X^{*}), \|(x^{*}_{n})_{n}\|_{p}^{w}\leq 1\}.$$

\begin{thm}\label{5.10}
Let $X$ be a Banach space and $1\leq p<\infty$. The following statements are equivalent:
\item[(1)]$X$ has property $p$-$(V^{*})_{q}$;
\item[(2)]there exists a constant $C>0$ such that for each bounded subset $A$ of $X$, one has
$$wk_{X}(A)\leq C\cdot\iota_{p}(A).$$
\end{thm}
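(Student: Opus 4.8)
The plan is to establish the equivalence by treating each implication as an instance of the general correspondence between operators into $X$ and bounded subsets of $X$, exactly as in the relationship between $wk_X(T)$ and $wk_X(TB_X)$. For $(2)\Rightarrow(1)$, given any Banach space $Y$ and any operator $T\colon Y\to X$, I would simply apply (2) to the bounded set $A=TB_Y\subset X$. The point to check is that $\iota_p(TB_Y)\le uc_p(T^{*})$: if $(x^{*}_n)_n\in l^{w}_p(X^{*})$ with $\|(x^{*}_n)_n\|^{w}_p\le 1$, then $(T^{*}x^{*}_n)_n\in l^{w}_p(Y^{*})$ with $\|(T^{*}x^{*}_n)_n\|^{w}_p\le 1$, and
\[
\sup_{y\in B_Y}|\langle x^{*}_n,Ty\rangle|=\sup_{y\in B_Y}|\langle T^{*}x^{*}_n,y\rangle|=\|T^{*}x^{*}_n\|,
\]
so $\limsup_n\sup_{x\in TB_Y}|\langle x^{*}_n,x\rangle|=\limsup_n\|T^{*}x^{*}_n\|\le uc_p(T^{*})$. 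Taking the supremum over all such $(x^{*}_n)_n$ gives $\iota_p(TB_Y)\le uc_p(T^{*})$, and combining with (2) yields $wk_X(T)=wk_X(TB_Y)\le C\cdot\iota_p(TB_Y)\le C\cdot uc_p(T^{*})$, which is property $p$-$(V^{*})_q$ with the same constant.

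For $(1)\Rightarrow(2)$, given a bounded subset $A$ of $X$, I would realize $A$ (up to scaling) as the image of a ball under a suitable operator. After normalizing so that $A\subset B_X$, consider the operator $T\colon \ell_1(A)\to X$ defined on the canonical basis by $Te_a=a$ for $a\in A$; then $TB_{\ell_1(A)}=\overline{\operatorname{aco}}(A)$, the closed absolutely convex hull of $A$. Since $wk_X$ and $\iota_p$ are, up to an absolute constant, unchanged when passing to the closed absolutely convex hull (for $wk_X$ this is a known stability property of measures of weak noncompactness; for $\iota_p$ it is immediate because $\sup_{x\in A}|\langle x^{*}_n,x\rangle|$ and $\sup_{x\in\overline{\operatorname{aco}}(A)}|\langle x^{*}_n,x\rangle|$ coincide), it suffices to control $wk_X(T)$ by $uc_p(T^{*})$ via (1) and then identify $uc_p(T^{*})$ with $\iota_p$ of the hull. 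Concretely, $T^{*}\colon X^{*}\to \ell_\infty(A)$ sends $x^{*}$ to $(\langle x^{*},a\rangle)_{a\in A}$, so for $(x^{*}_n)_n\in l^{w}_p(X^{*})$ with $\|(x^{*}_n)_n\|^{w}_p\le 1$ one has $\|T^{*}x^{*}_n\|_{\ell_\infty(A)}=\sup_{a\in A}|\langle x^{*}_n,a\rangle|$, hence $uc_p(T^{*})=\iota_p(A)$. Applying (1) gives $wk_X(A)\le wk_X(\overline{\operatorname{aco}}(A))=wk_X(T)\le C\cdot uc_p(T^{*})=C\cdot\iota_p(A)$ (possibly with $C$ enlarged by the absolute constant from the absolutely-convex-hull stability of $wk_X$).

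The main obstacle is the bookkeeping around the constant in $(1)\Rightarrow(2)$: one must ensure that passing to $\overline{\operatorname{aco}}(A)$ does not degrade the inequality, which requires invoking the stability of the quantity $wk_X(\cdot)$ under closed absolute convex hulls (a result of the type $wk_X(\overline{\operatorname{aco}}(A))\le K\cdot wk_X(A)$ for an absolute constant $K$); alternatively, one can avoid this by working with $A$ directly and using an operator from $\ell_1(\Gamma)$ onto a set whose ball image contains $A$ and is contained in a small multiple of $\overline{\operatorname{aco}}(A)$, absorbing the loss into $C$. A minor technical point is the case $p=1$, where $l^{w}_1(X^{*})$ should be handled with the same formulas since all the identities above are valid for $1\le p<\infty$ verbatim. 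Everything else is the routine duality computation $\sup_{y\in B_Y}|\langle x^{*},Ty\rangle|=\|T^{*}x^{*}\|$ together with the definitions of $\iota_p$, $uc_p$, and $wk_X$.
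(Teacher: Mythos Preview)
Your proposal is correct, and for $(2)\Rightarrow(1)$ it coincides with the paper's argument (the paper simply says ``trivial by taking $A=TB_Y$'', which is exactly your computation that $\iota_p(TB_Y)=uc_p(T^{*})$).

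For $(1)\Rightarrow(2)$ your route differs from the paper's. The paper does not pass to $\ell_1(A)$ and the closed absolutely convex hull; instead it works through the sequential quantity $wck_X$: given $\epsilon<wck_X(A)$ it picks a sequence $(x_n)_n$ in $A$ with $\epsilon<wck_X((x_n)_n)$, defines $T\colon \ell_1\to X$ by $T(\alpha_n)_n=\sum_n\alpha_n x_n$, and uses $wck_X((x_n)_n)\le wk_X(T)\le C\,uc_p(T^{*})\le C\,\iota_p(A)$. At the end the paper invokes $wk_X(A)\le 2\,wck_X(A)$ from (\ref{8}), so its bound is $wk_X(A)\le 2C\,\iota_p(A)$. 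Your approach via $T\colon\ell_1(A)\to X$, $Te_a=a$, is more direct and actually avoids this factor of $2$: since $A\subset\overline{\mathrm{aco}}(A)=TB_{\ell_1(A)}$ one has $wk_X(A)\le wk_X(T)$ by plain monotonicity, and $uc_p(T^{*})=\iota_p(A)$ exactly, giving $wk_X(A)\le C\,\iota_p(A)$ with the \emph{same} constant.

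One clarification: the ``main obstacle'' you flag is not an obstacle at all. You only need the trivial inclusion inequality $wk_X(A)\le wk_X(\overline{\mathrm{aco}}(A))$, not the reverse bound $wk_X(\overline{\mathrm{aco}}(A))\le K\,wk_X(A)$. So there is no enlargement of the constant in your argument, and the normalization $A\subset B_X$ is unnecessary (boundedness of $A$ suffices for $T$ to be a bounded operator, and both $wk_X$ and $\iota_p$ are homogeneous).
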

\begin{proof}
$(1)\Rightarrow (2)$. Suppose that $X$ has property $p$-$(V^{*})_{q}$ with a constant $C>0$.
Let $A$ be a bounded subset of $X$.
We first claim that $$wck_{X}(A)\leq C\cdot\iota_{p}(A).$$
Indeed, we may assume that $wck_{X}(A)>0$ and fix an arbitrary $\epsilon\in (0, wck_{X}(A))$. Then there exists a sequence $(x_{n})_{n}$ in $A$ such that $\epsilon<wck_{X}((x_{n})_{n})$. Define an operator
$$T: l_{1}\rightarrow X, \quad (\alpha_{n})_{n}\mapsto \sum_{n=1}^{\infty}\alpha_{n}x_{n}, \quad (\alpha_{n})_{n}\in l_{1}.$$
By (1) and (\ref{8}), we get
$$wck_{X}((x_{n})_{n})\leq wck_{X}(T)\leq wk_{X}(T)\leq C\cdot uc_{p}(T^{*}).$$
By the definitions of $T$ and $\iota_{p}(A)$, we get $$uc_{p}(T^{*})\leq \iota_{p}(A).$$ This yields $\epsilon<C\cdot\iota_{p}(A).$ By the arbitrariness of $\epsilon\in (0, wck_{X}(A))$, we prove the claim. Again by (\ref{8}), we obtain $$wk_{X}(A)\leq 2C\cdot\iota_{p}(A).$$

$(2)\Rightarrow (1)$ is trivial by taking $A=TB_{Y}$ with the same constant $C$.

\end{proof}

\begin{thm}\label{5.5}
Let $1\leq p\leq \infty$. If a Banach space $X$ has property $p$-$(V^{*})_{q}$ with a constant $C$, then every closed subspace of $X$ has property $p$-$(V^{*})_{q}$ with $2C$.
\end{thm}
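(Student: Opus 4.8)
The plan is to follow the pattern of Corollary \ref{3.10}, but now tracking constants carefully and relying on Lemma \ref{5.4} to pass between a subspace and the ambient space. Let $X_0$ be a closed subspace of $X$, let $Y$ be an arbitrary Banach space, and let $T\colon Y\to X_0$ be an operator. Denote by $j\colon X_0\hookrightarrow X$ the inclusion map, so that $jT\colon Y\to X$ is an operator into $X$. The adjoint is $(jT)^{*}=T^{*}j^{*}\colon X^{*}\to Y^{*}$, where $j^{*}\colon X^{*}\to X_0^{*}$ is the restriction map, which is a quotient map of norm one. The key point is that since $j^{*}$ is a metric surjection, every weakly $p$-summable sequence in $X_0^{*}$ of norm at most $1$ lifts (up to an arbitrarily small perturbation) to a weakly $p$-summable sequence in $X^{*}$ of norm at most $1+\varepsilon$; conversely $j^{*}$ maps $B_{X^{*}}$-bounded weakly $p$-summable sequences to weakly $p$-summable sequences without increasing the $\|\cdot\|_p^w$-norm. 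From this one gets the comparison
\begin{equation*}
uc_p(T^{*})=uc_p\bigl((jT)^{*}\circ(\text{lift})\bigr)\le uc_p\bigl((jT)^{*}\bigr),
\end{equation*}
and in fact $uc_p(T^{*}) \le uc_p((jT)^{*})$ after absorbing the $(1+\varepsilon)$ factor by homogeneity and letting $\varepsilon\to 0$. Actually the cleanest formulation: because $j^{*}B_{X^{*}}=B_{X_0^{*}}$, we have exactly $uc_p(T^{*})=uc_p((jT)^{*})$.

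Next I would apply the hypothesis to the operator $jT\colon Y\to X$: since $X$ has property $p$-$(V^{*})_q$ with constant $C$,
\begin{equation*}
wk_X(jT)\le C\cdot uc_p\bigl((jT)^{*}\bigr)=C\cdot uc_p(T^{*}).
\end{equation*}
Here $wk_X(jT)=wk_X\bigl(jT(B_Y)\bigr)=wk_X\bigl(T(B_Y)\bigr)$, viewing $T(B_Y)$ as a subset of $X$ through $j$. Now invoke Lemma \ref{5.4} with the bounded set $A=T(B_Y)\subset X_0$: the left inequality of (\ref{2}) gives nothing useful here, but the desired direction is
\begin{equation*}
wk_{X_0}(T)=wk_{X_0}\bigl(T(B_Y)\bigr)\le 2\,wk_X\bigl(T(B_Y)\bigr)=2\,wk_X(jT).
\end{equation*}
Chaining the two displays yields $wk_{X_0}(T)\le 2C\cdot uc_p(T^{*})$, which is exactly property $p$-$(V^{*})_q$ for $X_0$ with constant $2C$, and since $Y$ and $T$ were arbitrary we are done.

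The main obstacle — and the only place requiring genuine care rather than bookkeeping — is the identity $uc_p(T^{*})=uc_p((jT)^{*})$, i.e. verifying that restricting functionals from $X$ to $X_0$ neither creates nor destroys the relevant weakly $p$-summable test sequences with control on norms. The $\le$ direction is immediate since composing with the norm-one map $j^{*}$ cannot increase $\|(x_n^{*})_n\|_p^w$; the $\ge$ direction uses that $j^{*}$ is a metric surjection, so any normalized weakly $p$-summable sequence in $X_0^{*}$ is the image under $j^{*}$ of a weakly $p$-summable sequence in $X^{*}$ with $\|\cdot\|_p^w$ arbitrarily close to $1$, and then $\limsup_n\|T^{*}x_n^{*}\|$ over $x_n^{*}\in X_0^{*}$ is matched by $\limsup_n\|(jT)^{*}\tilde x_n^{*}\|$ for the lifts. (For $p=\infty$ one argues identically with weakly null sequences in place of weakly $p$-summable ones and norm-null images in place of the $uc_p$-quantity, so the statement covers the full range $1\le p\le\infty$ as claimed.) Everything else is a direct concatenation of the hypothesis and Lemma \ref{5.4}, and the constant $2$ is inherited entirely from the right-hand inequality in (\ref{2}), whose optimality was already noted after that lemma.
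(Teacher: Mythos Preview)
Your proof is correct and follows exactly the paper's route: apply the hypothesis to the composition $jT\colon Y\to X$, then invoke Lemma~\ref{5.4} to pass from $wk_X$ back to $wk_{X_0}$ at the cost of the factor $2$. One remark: you only need the inequality $uc_p((jT)^*)\le uc_p(T^*)$, which is the trivial direction (since $\|j^*\|\le 1$), and this is all the paper uses; your lifting argument for the reverse inequality is not quite complete---the fact that $j^*B_{X^*}=B_{X_0^*}$ lifts individual functionals norm-preservingly, but does not by itself guarantee that a weakly $p$-summable sequence in $X_0^*$ lifts to one in $X^*$ with controlled $\|\cdot\|_p^w$-norm---yet this is harmless, as that direction plays no role in the chain of inequalities.
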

\begin{proof}
Let $M$ be a closed subspace of $X$. Suppose that $X$ has property $p$-$(V^{*})_{q}$ with a constant $C>0$. We'll show that $M$ has property $p$-$(V^{*})_{q}$ with $2C$. Fix a Banach space $Y$ and an operator $S: Y \rightarrow M$. Then we have
$$wk_{X}(iS)\leq C\cdot uc_{p}(S^{*}i^{*}),$$ where $i: M\rightarrow X$ is the inclusion map.
By (\ref{2}), we get $$wk_{M}(S)\leq 2wk_{X}(iS)\leq 2C\cdot uc_{p}(S^{*}i^{*})\leq 2C\cdot uc_{p}(S^{*}),$$
we are done.

\end{proof}

\begin{thm}
Let $1\leq p\leq \infty$. Then a Banach space $X$ has property $p$-$(V^{*})_{q}$ if and only if there exists a constant $C>0$ such that every separable closed subspace of $X$ has property $p$-$(V^{*})_{q}$ with $C$.
\end{thm}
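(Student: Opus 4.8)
The forward implication is immediate from Theorem \ref{5.5}: if $X$ has property $p$-$(V^{*})_{q}$ with some constant $C_{0}$, then every closed subspace of $X$, in particular every separable one, has property $p$-$(V^{*})_{q}$ with the \emph{common} constant $2C_{0}$.

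For the converse, suppose there is $C>0$ such that every separable closed subspace of $X$ has property $p$-$(V^{*})_{q}$ with $C$; I would aim to show $X$ has it with constant $2C$, i.e.\ that $wk_{X}(T)\le 2C\,uc_{p}(T^{*})$ for every operator $T:Y\to X$. Fix such a $T$ and $\varepsilon>0$. First apply a standard sequence extraction: by the inequality $wk_{X}(\cdot)\le 2\,wck_{X}(\cdot)$ contained in (\ref{8}), choose a sequence $(y_{n})_{n}$ in $B_{Y}$ such that every weak$^{*}$-cluster point $\psi$ of $(Ty_{n})_{n}$ in $X^{**}$ satisfies $d(\psi,X)>\tfrac12 wk_{X}(T)-\varepsilon$, so in particular $wck_{X}(\{Ty_{n}:n\})>\tfrac12 wk_{X}(T)-\varepsilon$. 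The plan is then to pass to a suitably rich separable subspace: I would produce a separable closed subspace $Z\subseteq X$ with $\{Ty_{n}:n\}\subseteq Z$, together with a separable closed subspace $W\subseteq Y$ with $\{y_{n}:n\}\subseteq W$ and $T(W)\subseteq Z$, so that for the corestriction $T_{Z}:W\to Z$ of $T$ one has
\[
wk_{Z}(T_{Z})\ \ge\ wk_{X}(\{Ty_{n}:n\})\ \ge\ wck_{X}(\{Ty_{n}:n\})\ >\ \tfrac12 wk_{X}(T)-\varepsilon
\]
and
\[
uc_{p}(T_{Z}^{*})\ \le\ uc_{p}(T^{*})+\varepsilon(1+\|T\|).
\]
Granting both, the hypothesis applied to the separable space $Z$ gives $wk_{Z}(T_{Z})\le C\,uc_{p}(T_{Z}^{*})$, hence $\tfrac12 wk_{X}(T)-\varepsilon< C\bigl(uc_{p}(T^{*})+\varepsilon(1+\|T\|)\bigr)$; letting $\varepsilon\to0$ yields $wk_{X}(T)\le 2C\,uc_{p}(T^{*})$, as required. (For $p=\infty$ one argues identically, using weakly null sequences and the conventions for $uc_{\infty}$.)

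The first estimate is routine for \emph{any} separable $Z\supseteq\{Ty_{n}:n\}$: since $y_{n}\in B_{W}$ and $T_{Z}y_{n}=Ty_{n}$, monotonicity of $wk_{Z}$ gives $wk_{Z}(T_{Z})\ge wk_{Z}(\{Ty_{n}:n\})$, and then $wk_{Z}(\{Ty_{n}:n\})\ge wk_{X}(\{Ty_{n}:n\})$ by Lemma \ref{5.4} applied to $\{Ty_{n}\}\subseteq Z\subseteq X$, while $wk_{X}\ge wck_{X}$ by (\ref{8}). So everything rests on building $Z$ (and $W$) achieving the second estimate. Note that, because $T_{Z}w=Tw$ for $w\in W$, one has $\|T_{Z}^{*}\xi^{*}\|_{W^{*}}=\sup_{w\in B_{W}}|\langle\xi^{*},Tw\rangle|\le\|T^{*}x^{*}\|_{Y^{*}}$ whenever $\xi^{*}\in Z^{*}$ is the restriction of some $x^{*}\in X^{*}$; hence it is enough that $Z$ be \emph{rich} in the following sense: every normalized weakly $p$-summable sequence in $Z^{*}$ is, up to a norm-$\varepsilon$ perturbation, the restriction to $Z$ of a weakly $p$-summable sequence in $X^{*}$ of norm at most $1+\varepsilon$. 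I would obtain such a $Z$ by a closing-off construction — starting from $Z_{0}=\overline{\operatorname{span}}\{Ty_{n}\}$, $W_{0}=\overline{\operatorname{span}}\{y_{n}\}$, build increasing separable subspaces $Z_{k}\subseteq X$, $W_{k}\subseteq Y$ with $T(W_{k})\subseteq Z_{k}$, at each stage adjoining countably many vectors of $X$ (and of $Y$) that witness, via carefully chosen Hahn--Banach extensions, the liftability to $X^{*}$ of enough of the weakly $p$-summable sequences already present in $Z_{k}^{*}$, and then set $Z=\overline{\bigcup_{k}Z_{k}}$, $W=\overline{\bigcup_{k}W_{k}}$; equivalently one may take $Z=\overline{X\cap M}$, $W=\overline{Y\cap M}$ for a countable elementary submodel $M$ of a large enough fragment of set theory containing $X,Y,T,p,C$.

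The hard part is precisely the second estimate: one must check that liftability of a normalized weakly $p$-summable sequence through the restriction quotient map $X^{*}\to Z^{*}$ is witnessed by countably much data, so that the closing-off stabilizes after $\omega$ steps. This is delicate because for an \emph{arbitrary} closed subspace $Z\subseteq X$ the restriction map need not be essentially surjective on normalized weakly $p$-summable sequences — equivalently, operators $Z\to\ell_{p}$ $(1\le p<\infty)$ need not extend to $X\to\ell_{p}$ with controlled norm — so the inequality $uc_{p}(T_{Z}^{*})\le uc_{p}(T^{*})+o(1)$ genuinely fails without richness of $Z$. Verifying that a countable closing-off (or a countable elementary submodel) produces a $Z$ rich enough for this lifting is the heart of the proof; the rest is bookkeeping with Lemma \ref{5.4}, the inequalities in (\ref{8}), and the weak$^{*}$-to-weak$^{*}$ continuity of the canonical isometric embedding $Z^{**}\hookrightarrow X^{**}$ (which is what makes weak$^{*}$-cluster points of $(Ty_{n})$ computed in $Z^{**}$ control those computed in $X^{**}$).
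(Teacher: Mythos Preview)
Your forward direction matches the paper's. For the converse you have correctly identified both the overall strategy --- pass to a separable subspace $Z$ containing a distinguished sequence from $TB_Y$, apply the hypothesis there, and transfer back via Lemma~\ref{5.4} and (\ref{8}) --- and the crux: one needs weakly $p$-summable sequences in $Z^*$ to lift to weakly $p$-summable sequences in $X^*$ with controlled norm. But you leave precisely this lifting step unproved; your closing-off/elementary-submodel sketch is plausible but not carried out, and as you yourself note it is the heart of the matter.

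The paper closes this gap by citing a ready-made result of Heinrich and Mankiewicz \cite[Proposition~3.4]{HM}: given any sequence $(x_n)$ in $X$, there is a separable closed subspace $Z\supseteq\{x_n\}$ together with a \emph{linear isometric} section $J:Z^*\to X^*$ of the restriction map (i.e.\ $Jz^*|_Z=z^*$ for all $z^*\in Z^*$). Because $J$ is an isometric embedding, every $(z_n^*)\in l_p^w(Z^*)$ with $\|(z_n^*)\|_p^w\le1$ lifts to $(Jz_n^*)\in l_p^w(X^*)$ with the same bound, and $\langle Jz_n^*,x_k\rangle=\langle z_n^*,x_k\rangle$ since $x_k\in Z$ --- so your ``richness'' comes for free, with no $\varepsilon$-loss. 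The paper also simplifies your scaffolding: instead of building $W\subseteq Y$ and a corestriction $T_Z:W\to Z$, it just defines $S:l_1\to Z$ by $S(\alpha_n)_n=\sum_n\alpha_n x_n$ and applies the hypothesis to $S$; the comparison $uc_p(S^*)\le uc_p(T^*)$ is then immediate from the lifting via $J$. (It also works with $\gamma$ rather than $wck$ in the intermediate step, using the norm-one projection $P=J\circ(\cdot|_Z)$ to get $\gamma_X((x_n))=\gamma_Z((x_n))$ exactly.) Your elementary-submodel route would in effect reprove the Heinrich--Mankiewicz lemma, so the approaches ultimately coincide; the paper's advantage is that it quotes the result rather than rebuilds it.
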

\begin{proof}
The necessary part follows from Theorem \ref{5.5}. Conversely, let $C>0$ be such that every separable closed subspace of $X$ has property $p$-$(V^{*})_{q}$ with $C$. We claim that
\begin{center}
$\gamma_{X}(T)\leq 2C\cdot uc_{p}(T^{*})$, for every Banach space $Y$ and every operator $T: Y\rightarrow X$.
\end{center}
Fix a space $Y$ and an operator $T: Y\rightarrow X$.
Let $A=TB_{Y}$. We may assume that $\gamma_{X}(A)>0$. Fix any $\epsilon\in(0,\gamma_{X}(A))$. Then there exists a sequence $(x_{n})_{n}$ in $A$ such that $\epsilon<\gamma_{X}((x_{n})_{n})$. By \cite[Proposition 3.4]{HM}, there exist a separable closed subspace $Z$ of $X$ that contains $(x_{n})_{n}$ and an isometric embedding $J: Z^{*}\rightarrow X^{*}$ such that $Jz^{*}|_{Z}=z^{*}$ for every $z^{*}\in Z^{*}$. Define an operator
$$P: X^{*}\rightarrow J(Z^{*}),  \quad x^{*}\mapsto J(x^{*}|_{Z}), \quad x^{*}\in X^{*}.$$
Then $P$ is a linear projection from $X^{*}$ onto $J(Z^{*})$ with $\|P\|=1$. We define an operator
$$S: l_{1}\rightarrow Z, \quad (\alpha_{n})_{n}\mapsto \sum_{n=1}^{\infty}\alpha_{n}x_{n}, \quad (\alpha_{n})_{n}\in l_{1}.$$
By hypothesis, we get $$wk_{Z}(S)\leq C\cdot uc_{p}(S^{*}).$$ By (\ref{8}), we have
$$wk_{Z}(S)\geq wk_{Z}((x_{n})_{n})\geq \frac{1}{2}\gamma_{Z}((x_{n})_{n}).$$
By the definition of $S$ and the properties of $J$, we obtain
\begin{align*}
\gamma_{Z}((x_{n})_{n})&\leq 2C\cdot \sup\{\limsup_{n}\sup_{k}|<z^{*}_{n},x_{k}>|: (z^{*}_{n})_{n}\in l^{w}_{p}(Z^{*}), \|(z^{*}_{n})_{n}\|_{p}^{w}\leq 1\}\\
&\leq 2C\cdot \sup\{\limsup_{n}\sup_{k}|<x^{*}_{n},x_{k}>|: (x^{*}_{n})_{n}\in l^{w}_{p}(X^{*}), \|(x^{*}_{n})_{n}\|_{p}^{w}\leq 1\}\\
&\leq 2C\cdot uc_{p}(T^{*})\\
\end{align*}
By the definition of $P$, we get $$\gamma_{X}((x_{n})_{n})=\gamma_{Z}((x_{n})_{n}).$$ Thus, one has
$\epsilon<2C\cdot uc_{p}(T^{*}),$ which proves the claim by the arbitrariness of $\epsilon$.

By (\ref{8}) again, we have $$wk_{X}(T)\leq 2C\cdot uc_{p}(T^{*}).$$ This implies that
$X$ has property $p$-$(V^{*})_{q}$ with $2C$.

\end{proof}

\begin{thm}\label{5.8}
Let $X=L_{1}(\mu,\mathbb{R})$, where $(\Omega,\Sigma,\mu)$ is a finite measure space. Then $$wk_{X}(A)=\iota_{1}(A)$$ for each bounded subset $A$ of $X$.
\end{thm}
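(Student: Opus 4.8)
The plan is to prove the two inequalities $wk_{X}(A)\le\iota_{1}(A)$ and $\iota_{1}(A)\le wk_{X}(A)$ separately. The easy direction is $\iota_{1}(A)\le wk_{X}(A)$: given any $(x^{*}_{n})_{n}\in l^{w}_{1}(X^{*})$ with $\|(x^{*}_{n})_{n}\|_{1}^{w}\le1$, the series $\sum_n x^{*}_n$ is weakly unconditionally Cauchy, so by a standard argument one compares $\limsup_{n}\sup_{x\in A}|\langle x^{*}_{n},x\rangle|$ with the behaviour of an arbitrary $weak^{*}$-cluster point $x^{**}\in\overline{A}^{w^{*}}$ of a sequence in $A$ witnessing the supremum; since $(x^{*}_n)$ is weakly null in $X^{*}$ (being weakly $1$-summable, hence weakly null), one gets that the relevant quantity is controlled by $\widehat d(\overline A^{w^*},X)=wk_X(A)$. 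I would carry this out by picking, for $\varepsilon>0$, a sequence $(x_k)\subset A$ and $(x^*_n)$ realizing $\iota_1(A)$ up to $\varepsilon$, passing to $weak^{*}$-cluster points, and using that $\langle x^{**}-x,x^{*}_{n}\rangle\to0$ for $x^{**}\in X$ together with a diagonal/Rosenthal-type selection.

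The substantive direction is $wk_{X}(A)\le\iota_{1}(A)$. Here I would use the concrete structure of $L_{1}(\mu)$ with $\mu$ finite. Fix a bounded $A\subset L_1(\mu)$ and let $r>\iota_1(A)$; I want to show $wk_X(A)\le r$, i.e. every $weak^{*}$-cluster point $F\in X^{**}=L_1(\mu)^{**}$ of a sequence in $A$ is within $r$ of $L_1(\mu)$. The key tool is the classical characterization of weak compactness in $L_1$ via uniform integrability (the Dunford–Pettis theorem): the obstruction to weak compactness of $A$ is exactly the failure of uniform integrability, which can be quantified as $\lim_{\delta\to0}\sup_{x\in A}\sup_{\mu(E)<\delta}\int_E|x|\,d\mu$. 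The plan is to show this uniform-integrability defect is bounded by $\iota_1(A)$: if uniform integrability fails at level $r$, one extracts (by exhaustion, using finiteness of $\mu$) pairwise disjoint sets $E_n$ and functions $x_n\in A$ with $\int_{E_n}|x_n|\,d\mu>r$, then the functionals $x^{*}_{n}=\mathrm{sign}(x_n)\chi_{E_n}\in B_{L_\infty}=B_{X^{*}}$ have pairwise disjoint supports, hence $(x^{*}_n)_n\in l^{w}_{1}(X^{*})$ with $\|(x^{*}_n)_n\|^{w}_{1}\le1$, while $\sup_{x\in A}|\langle x^{*}_n,x\rangle|\ge\langle x^{*}_n,x_n\rangle>r$, contradicting $r>\iota_1(A)$.

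The remaining point is to relate the uniform-integrability defect of $A$ to $wk_X(A)=\widehat d(\overline A^{w^*},L_1)$ with the constant $1$ (no loss). For this I would invoke (or reprove) the quantitative version of the Dunford–Pettis theorem already present in the literature on $\omega(\cdot)$ versus $wk(\cdot)$ in $L_1(\mu)$ — indeed the paper recalls from \cite{KKS} that $\omega=\omega$-type quantities agree in $L_1(\mu)$; the identity $wk_X(A)=\inf\{\varepsilon:A$ is $\varepsilon$-uniformly integrable up to an $L_1$-perturbation$\}$ is exactly what is needed, and it follows from a Yosida–Hewitt decomposition of $weak^{*}$-cluster points $F=F_{\mathrm{ac}}+F_{\mathrm{sing}}$ with $F_{\mathrm{ac}}\in L_1$ and $\|F_{\mathrm{sing}}\|$ equal to the uniform-integrability defect. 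The main obstacle I anticipate is precisely pinning down this last identification with the sharp constant $1$ on both sides simultaneously: one must be careful that the $weak^{*}$-closure in $L_1^{**}$, the Yosida–Hewitt singular part, and the combinatorial disjointification all line up without introducing a factor of $2$, which is why the finiteness of $\mu$ (guaranteeing $\chi_{E_n}\in L_\infty$ with norm $1$ and enabling clean exhaustion arguments) is essential. For $X=l_1$ the same scheme applies verbatim with $\mu$ counting measure on $\mathbb{N}$, the "sets" $E_n$ being finite blocks of coordinates, $l_\infty=l_1^{*}$, and uniform integrability replaced by the condition $\lim_n\sup_{x\in A}\sum_{k\ge n}|x(k)|=0$ from Lemma~\ref{3.14}.
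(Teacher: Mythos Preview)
Your plan for the direction $wk_{X}(A)\le\iota_{1}(A)$ is sound and essentially matches the paper's Step~1: invoke the quantitative uniform-integrability characterisation of $wk_{X}$ in $L_{1}(\mu)$ from \cite{KKS}, produce $x_{n}\in A$ and small-measure sets carrying mass~$>\epsilon$, disjointify, and take $x^{*}_{n}=\mathrm{sign}(x_{n})\chi_{E_{n}}$. The paper carries out the disjointification via Chebyshev's inequality (so that $\mu(\{|x_{k}|\ge k\})\to0$) and the subsequence-splitting lemma \cite[Lemma~5.2.1]{AK}, rather than a direct exhaustion, but the idea is the same.

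The gap is in the direction you call ``easy'', $\iota_{1}(A)\le wk_{X}(A)$. Your weak$^{*}$-cluster-point sketch does not work as stated: having chosen $x_{n}\in A$ with $|\langle x^{*}_{n},x_{n}\rangle|$ close to the supremum, a weak$^{*}$ cluster point $x^{**}$ of $(x_{n})$ gives you control of $\limsup_{n}|\langle x^{**},x^{*}_{n}\rangle|$ for that \emph{fixed} $x^{**}$, but the diagonal pairing $\langle x^{*}_{n},x_{n}\rangle$ carries $n$ in both slots, and there is no ``standard'' way to interchange the $\sup$ over $A$ with the $\limsup$ in $n$ without losing a constant (at best you would land on $\gamma_{X}(A)\le 2\,wk_{X}(A)$). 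The fact that $(x^{*}_{n})$ is weakly null in $X^{*}$ --- even that $\langle x^{**},x^{*}_{n}\rangle\to0$ for every $x^{**}\in X^{**}$ --- does not resolve this uniformity issue.

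The paper treats $\iota_{1}(A)\le wk_{X}(A)$ as the hard direction (its Step~2) and proves it by a substantial construction specific to $L_{1}$: starting from $(f_{n})\in l^{w}_{1}(L_{\infty})$ with $\|(f_{n})\|_{1}^{w}\le1$ and $(x_{n})\subset A$ with $\int f_{n}x_{n}\,d\mu>\epsilon$, one uses $\sum_{n}|f_{n}|\le1$ a.e.\ to iteratively truncate the $f_{n}$'s to pairwise disjoint supports $F_{k}$, losing only $\sum_{j}\delta/2^{j}<\delta$ in the process, so that $\int_{F_{k}}|x_{N_{k}}|\,d\mu>\epsilon-\delta$ on disjoint sets; feeding this back into the \cite{KKS} formula for $wk_{X}$ in $L_{1}(\mu)$ yields $wk_{X}(A)\ge\epsilon-\delta$. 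You have, in effect, swapped the labels on the two halves: the disjoint-support construction that you reserve for $wk_{X}\le\iota_{1}$ is also the engine that drives $\iota_{1}\le wk_{X}$, and the purely abstract cluster-point argument does not deliver constant~$1$.
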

\begin{proof}
We may assume that $A$ is a subset of $B_{X}$.

Step 1. $wk_{X}(A)\leq \iota_{1}(A)$.

Let us assume that $wk_{X}(A)>0$ and fix an arbitrary $\epsilon\in (0,wk_{X}(A))$. It follows from \cite[Proposition 7.1]{KKS} that there exists a sequence $(x_{k})_{k}$ in $A$ such that
\begin{equation}\label{30}
\int_{E_{k}}|x_{k}|d\mu>\epsilon, \quad k=1,2,\cdots
\end{equation}
where $E_{k}=\{t\in \Omega: |x_{k}(t)|\geq k\}$.

For each $k\in \mathbb{N}$, Chebyshev's inequality gives $\mu(E_{k})\leq \frac{1}{k}$ and hence the sequence $(x_{k}\chi_{E_{k}})_{k}$ converges to $0$ in measure. By \cite[Lemma 5.2.1]{AK}, there exist a subsequence $(x_{k_{n}}\chi_{E_{k_{n}}})_{n}$ of $(x_{k}\chi_{E_{k}})_{k}$ and a sequence of disjoint measurable sets $(A_{n})_{n}$ such that
\begin{equation}\label{31}
\|x_{k_{n}}\chi_{E_{k_{n}}}-x_{k_{n}}\chi_{E_{k_{n}}}\chi_{A_{n}}\|_{1}\rightarrow 0 \quad (n\rightarrow \infty).
\end{equation}
Set $B_{n}=E_{k_{n}}\cap A_{n}$ and $f_{n}=sign(x_{k_{n}})\chi_{B_{n}}$ for each $n\in \mathbb{N}$. Then $(f_{n})_{n}$ is weakly 1-summable in $X^{*}$ and $\|(f_{n})_{n}\|_{1}^{w}\leq 1$. Combining (\ref{30}) with (\ref{31}), we get $$\limsup_{n}|<f_{n},x_{k_{n}}>|\geq \epsilon,$$
which implies that $\iota_{1}(A)\geq \epsilon$. Since $\epsilon\in (0,wk_{X}(A))$ is arbitrary, we conclude Step 1.

Step 2. $\iota_{1}(A)\leq wk_{X}(A)$.

Similarly, we can assume that $\iota_{1}(A)>0$ and fix an arbitrary $\epsilon\in (0,\iota_{1}(A))$. Then there exist a sequence $(x_{n})_{n}$ in $A$ and $(f_{n})_{n}\in l^{w}_{1}(X^{*})$ with $\|(f_{n})_{n}\|_{1}^{w}\leq 1$ such that
\begin{equation}\label{32}
\int_{\Omega}f_{n}\cdot x_{n}d\mu>\epsilon, \quad n=1,2,\cdots
\end{equation}
It is easy to verify that $$\|(f_{n})_{n}\|_{1}^{w}=\sup_{n}\|\sum_{k=1}^{n}|f_{k}|\|.$$ Thus, we get $$\sum_{k=1}^{\infty}|f_{k}|\leq 1, \mu-a.e.$$
For the sake of convenience, we may assume that $\sum_{k=1}^{\infty}|f_{k}|\leq 1$ everywhere.

Let $\delta>0$ be arbitrary. By (\ref{32}), we obtain $N_{1}\in \mathbb{N}$ such that $$\int_{E_{1}}f_{1}\cdot x_{1}d\mu>\epsilon,$$
where $E_{1}=\{t\in \Omega: \sum_{k=N_{1}}^{\infty}|f_{k}(t)|<\frac{\delta}{2}\}$.

Set $\widetilde{f_{1}}=f_{1}\cdot \chi_{E_{1}}$ and $\widetilde{f}_{n}=f_{n}\cdot \chi_{E^{c}_{1}}(n\geq N_{1})$. Then $\widetilde{f_{1}}$ is disjoint from $\widetilde{f}_{n}$ for each $n\geq N_{1}$. Moreover, for each $n\geq N_{1}$, we have
\begin{equation}\label{33}
\int_{\Omega}\widetilde{f}_{n}\cdot x_{n}d\mu=\int_{\Omega}f_{n}\cdot x_{n}d\mu-\int_{E_{1}}f_{n}\cdot x_{n}d\mu>\epsilon-\frac{\delta}{2}.
\end{equation}
Obviously,
\begin{center}
$\sum_{k=N_{1}}^{\infty}|\widetilde{f_{k}}|\leq \sum_{k=N_{1}}^{\infty}|f_{k}|\leq 1$ everywhere.
\end{center}
By (\ref{33}), we obtain $N_{2}>N_{1}$ such that
\begin{equation}\label{34}
\int_{E_{2}}\widetilde{f}_{N_{1}}\cdot x_{N_{1}}d\mu >\epsilon-\frac{\delta}{2}.
\end{equation}
where $E_{2}=\{t\in \Omega: \sum_{k=N_{2}}^{\infty}|\widetilde{f_{k}}(t)|<\frac{\delta}{2^{2}}\}$.

Set $\widetilde{\widetilde{f}_{2}}=\widetilde{f}_{N_{1}}\cdot \chi_{E_{2}}$ and ${\widetilde{\widetilde{f}}}_{n}=\widetilde{f}_{n}\cdot \chi_{E^{c}_{2}}(n\geq N_{2})$. Then, by (\ref{34}), we get
\begin{equation}\label{35}
\int_{F_{1}}|x_{N_{1}}|d\mu\geq \int_{\Omega}\widetilde{\widetilde{f}_{2}}\cdot x_{N_{1}}d\mu=\int_{E_{2}}\widetilde{f}_{N_{1}}\cdot x_{N_{1}}d\mu >\epsilon-\frac{\delta}{2}.
\end{equation}
where $F_{1}$ is the support of $\widetilde{\widetilde{f}_{2}}$.

By (\ref{33}), for each $n\geq N_{2}$, we have
\begin{align*}
\int_{\Omega}{\widetilde{\widetilde{f}}}_{n}\cdot x_{n}d\mu&=\int_{E^{c}_{2}}\widetilde{f}_{n}\cdot x_{n}d\mu\\
&=\int_{\Omega}\widetilde{f}_{n}\cdot x_{n}d\mu-\int_{E_{2}}\widetilde{f}_{n}\cdot x_{n}d\mu\\
&>\epsilon-\frac{\delta}{2}-\frac{\delta}{2^{2}}.
\end{align*}
In a similar way, we obtain $N_{3}>N_{2}$ such that
\begin{equation}\label{36}
\int_{E_{3}}\widetilde{\widetilde{f}}_{N_{2}}\cdot x_{N_{2}}d\mu >\epsilon-\frac{\delta}{2}-\frac{\delta}{2^{2}}.
\end{equation}
where $E_{3}=\{t\in \Omega: \sum_{k=N_{3}}^{\infty}|\widetilde{\widetilde{f}_{k}}(t)|<\frac{\delta}{2^{3}}\}$.

Set $\widetilde{\widetilde{\widetilde{f}_{3}}}=\widetilde{\widetilde{f}}_{N_{2}}\cdot \chi_{E_{3}}$ and ${\widetilde{\widetilde{\widetilde{f}}}}_{n}={\widetilde{\widetilde{f}}}_{n}\cdot \chi_{E^{c}_{3}}(n\geq N_{3})$.
Then, by (\ref{36}), we get
\begin{equation}\label{37}
\int_{F_{2}}|x_{N_{2}}|d\mu\geq \int_{\Omega}\widetilde{\widetilde{\widetilde{f}_{3}}}\cdot x_{N_{2}}d\mu >\epsilon-\frac{\delta}{2}-\frac{\delta}{2^{2}}.
\end{equation}
where $F_{2}$ is the support of $\widetilde{\widetilde{\widetilde{f}_{3}}}$.

Since $\widetilde{\widetilde{f}_{2}}$ is disjoint from ${\widetilde{\widetilde{f}}}_{n}$ for each $n\geq N_{2}$, the set $F_{2}$ is also disjoint from $F_{1}$.

A similar computation shows that for each $n\geq N_{3}$,
$$\int_{\Omega}{\widetilde{\widetilde{\widetilde{f}}}}_{n}\cdot x_{n}d\mu>\epsilon-\frac{\delta}{2}-\frac{\delta}{2^{2}}-\frac{\delta}{2^{3}}.$$
Thus, there exists $N_{4}>N_{3}$ such that
\begin{equation}\label{38}
\int_{E_{4}}\widetilde{\widetilde{\widetilde{f}}}_{N_{3}}\cdot x_{N_{3}}d\mu >\epsilon-\frac{\delta}{2}-\frac{\delta}{2^{2}}-\frac{\delta}{2^{3}}.
\end{equation}
where $E_{4}=\{t\in \Omega: \sum_{k=N_{4}}^{\infty}|\widetilde{\widetilde{\widetilde{f}_{k}}}(t)|<\frac{\delta}{2^{4}}\}$.

Set $\widetilde{\widetilde{\widetilde{\widetilde{f}_{4}}}}=\widetilde{\widetilde{\widetilde{f}}}_{N_{3}}\cdot \chi_{E_{4}}$ and
${\widetilde{\widetilde{\widetilde{\widetilde{f}}}}}_{n}={\widetilde{\widetilde{\widetilde{f}}}}_{n}\cdot \chi_{E_{4}^{c}}$ for $n\geq N_{4}$.
Inequality (\ref{38}) yields
$$\int_{F_{3}}|x_{N_{3}}|d\mu\geq \int_{\Omega}\widetilde{\widetilde{\widetilde{\widetilde{f}_{4}}}}\cdot x_{N_{3}}d\mu>
\epsilon-\frac{\delta}{2}-\frac{\delta}{2^{2}}-\frac{\delta}{2^{3}},$$
where $F_{3}$ is the support of $\widetilde{\widetilde{\widetilde{\widetilde{f}_{4}}}}$.

Since $\widetilde{\widetilde{\widetilde{f}_{3}}}$ is disjoint from $\widetilde{\widetilde{\widetilde{f}}}_{N_{3}}$, the set $F_{3}$ is disjoint from $F_{2}$. Since $\widetilde{\widetilde{f}_{2}}$ is disjoint from $\widetilde{\widetilde{f}}_{N_{3}}$, the set $F_{3}$ is disjoint from $F_{1}$.

By induction, we get a subsequence $(x_{N_{k}})_{k}$ of $(x_{n})_{n}$ and a sequence of disjoint measurable sets $(F_{k})_{k}$ such that
$$\int_{F_{k}}|x_{N_{k}}|d\mu>\epsilon-\sum_{i=1}^{k}\frac{\delta}{2^{i}}>\epsilon-\delta, \quad k=1,2,\cdots$$

Claim: $wk_{X}(A)\geq \epsilon-\delta.$

Indeed, fix any $c>0$. Then, for each $k\in \mathbb{N}$,
\begin{align*}
\epsilon-\delta&<\int_{F_{k}}|x_{N_{k}}|d\mu\\
&=\int_{F_{k}\cap \{|x_{N_{k}}|\geq c\}}(|x_{N_{k}}|-c)d\mu+c\mu(F_{k}\cap \{|x_{N_{k}}|\geq c\})+\int_{F_{k}\cap \{|x_{N_{k}}|<c\}}|x_{N_{k}}|d\mu\\
&\leq \int_{F_{k}\cap \{|x_{N_{k}}|\geq c\}}(|x_{N_{k}}|-c)d\mu+c\mu(F_{k}\cap \{|x_{N_{k}}|\geq c\})+c\mu(F_{k}\cap \{|x_{N_{k}}|<c\})\\
&\leq \int_{\{|x_{N_{k}}|\geq c\}}(|x_{N_{k}}|-c)d\mu+c\mu(F_{k})\\
&\leq \sup_{x\in A}\int_{\{|x|\geq c\}}(|x|-c)d\mu+c\mu(F_{k})\\
\end{align*}
Since $(F_{k})_{k}$ is disjoint and $\mu$ is finite, $\mu(F_{k})\rightarrow 0(k\rightarrow \infty)$.

Letting $k\rightarrow \infty$, we get
$$\epsilon-\delta\leq \sup_{x\in A}\int_{\{|x|\geq c\}}(|x|-c)d\mu.$$
Again by \cite[Proposition 7.1]{KKS}, we prove the claim.

Since $\delta>0$ is arbitrary, we get $$wk_{X}(A)\geq \epsilon.$$
By the arbitrariness of $\epsilon\in (0,\iota_{1}(A))$, we obtain $$\iota_{1}(A)\leq wk_{X}(A).$$
This completes the proof.

\end{proof}

\begin{thm}\label{5.12}
Let $X=l_{1}(\mathbb{N}, \mathbb{R})$. Then $$wk_{X}(A)=\iota_{1}(A)$$ for each bounded subset $A$ of $X$.
\end{thm}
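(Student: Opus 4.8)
The goal is to prove that $wk_X(A) = \iota_1(A)$ for $X = \ell_1(\mathbb{N},\mathbb{R})$ and every bounded subset $A$ of $X$. My plan is to follow the same two-inequality scheme as in Theorem \ref{5.8}, which handled $L_1(\mu)$ for a finite measure. The key difference is that $\ell_1 = L_1(\nu)$ where $\nu$ is counting measure on $\mathbb{N}$, which is $\sigma$-finite but not finite, so the arguments based on Chebyshev's inequality and on the tail-estimate from \cite[Proposition 7.1]{KKS} must be revisited. Actually, $\ell_1$ is particularly amenable because its ``atoms'' $e_n$ are the natural building blocks, and a bounded set $A \subseteq B_{\ell_1}$ is relatively weakly compact if and only if $\lim_{n\to\infty}\sup_{x\in A}\sum_{k\geq n}|x(k)| = 0$ (the uniform tail condition).

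Let me sketch both inequalities. For $wk_X(A) \leq \iota_1(A)$: assuming $wk_X(A) > \epsilon$, the failure of the uniform-tail condition yields a sequence $(x_k)_k$ in $A$ and a block decomposition — more precisely, after passing to a subsequence one can find strictly increasing integers $m_0 < m_1 < \cdots$ and points $x_{k}$ such that $\sum_{j=m_{k-1}+1}^{m_k} |x_k(j)| > \epsilon$. Then define $f_k = \sum_{j=m_{k-1}+1}^{m_k} \operatorname{sign}(x_k(j)) e_j^*$; these have pairwise disjoint supports, hence $\|(f_k)_k\|_1^w = \sup_k \|\,|f_k|\,\| \leq 1$, and $|\langle f_k, x_k\rangle| = \sum_{j=m_{k-1}+1}^{m_k}|x_k(j)| > \epsilon$, so $\iota_1(A) \geq \epsilon$. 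This direction should be cleaner than the $L_1(\mu)$ case since no disjointification lemma (\cite[Lemma 5.2.1]{AK}) is needed — disjoint blocks of coordinates are automatically available. The subtle point is extracting the $x_k$ and the blocks simultaneously, which is a standard gliding-hump argument using that $wk_X(A) > \epsilon$ means $A$ is not relatively weakly compact with ``margin'' $\epsilon$; I would invoke the characterization of $wk_X$ on $\ell_1$ via the tail functional, or argue directly.

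For $\iota_1(A) \leq wk_X(A)$: fix $\epsilon \in (0, \iota_1(A))$, so there are $(x_n)_n$ in $A$ and $(f_n)_n \in \ell_1^w(X^*)$ with $\|(f_n)_n\|_1^w \leq 1$ and $\langle f_n, x_n\rangle > \epsilon$. As in Theorem \ref{5.8}, $\|(f_n)_n\|_1^w \leq 1$ forces $\sum_{k=1}^\infty |f_k(j)| \leq 1$ for every $j$; viewing $f_n$ as a bounded sequence, pick $\delta > 0$ and run the same iterative ``chopping'' construction to extract a subsequence $(x_{N_k})_k$ and pairwise disjoint (finite) coordinate blocks $F_k$ with $\sum_{j\in F_k}|x_{N_k}(j)| > \epsilon - \delta$. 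The last step is to conclude $wk_X(A) \geq \epsilon - \delta$: since the $F_k$ are pairwise disjoint finite subsets of $\mathbb{N}$, for any fixed $n_0$ all but finitely many $F_k$ lie in $\{n_0, n_0+1, \ldots\}$, so $\sup_{x\in A}\sum_{j\geq n_0}|x(j)| \geq \epsilon - \delta$ for every $n_0$; hence the uniform tail does not go to zero and $wk_X(A) \geq \epsilon - \delta$. Letting $\delta \to 0$ and then $\epsilon \to \iota_1(A)$ finishes it.

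The main obstacle I anticipate is making the second inequality's chopping argument rigorous in the $\sigma$-finite (non-finite) setting — in Theorem \ref{5.8} finiteness of $\mu$ was used to get $\mu(F_k) \to 0$, whereas for counting measure $\nu(F_k)$ need not tend to $0$ unless the $F_k$ are singletons. So the cleanest route is to replace the ``$c\mu(F_k)$'' error terms by an argument tailored to $\ell_1$: once we have disjoint finite blocks $F_k$ with $\sum_{j\in F_k}|x_{N_k}(j)| > \epsilon - \delta$, we directly deduce non-relative-weak-compactness of $A$ with margin $\epsilon - \delta$ via the tail characterization, bypassing the $c \to 0$ limiting step of \cite[Proposition 7.1]{KKS} entirely. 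Alternatively, and perhaps most economically, one can derive Theorem \ref{5.12} from Theorem \ref{5.8} by a limiting/localization argument, writing $\ell_1$ as the union of the finite-dimensional (hence $L_1$ of finite measure) coordinate subspaces and controlling $wk$ and $\iota_1$ under the natural projections; I would present the direct proof but remark on this shortcut.
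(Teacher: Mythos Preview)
Your proposal is correct and follows essentially the same route as the paper: Step~1 uses the tail characterization of $wk_X$ on $\ell_1$ (the paper cites \cite[Lemma~7.2]{KKS}) to extract disjoint coordinate blocks and then tests against the disjointly supported sign-functionals, and Step~2 runs exactly the iterative chopping from Theorem~\ref{5.8} and finishes via the disjointness of the $F_k$ (forcing all but finitely many of them into any fixed tail $\{n_0,n_0+1,\ldots\}$), which is precisely the paper's argument invoking \cite[Proposition~7.3]{KKS}. One minor point: the $F_k$ produced by the chopping need not be finite, but your tail conclusion only uses their pairwise disjointness, so this does not affect the argument.
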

\begin{proof}
We may assume that $A$ is a subset of $B_{X}$.

Step 1. $wk_{X}(A)\leq \iota_{1}(A)$.

We may assume that $wk_{X}(A)>0$ and fix an arbitrary $c\in (0,wk_{X}(A))$. By \cite[Lemma 7.2]{KKS}, we obtain two sequences $(p_{n})_{n}, (q_{n})_{n}$ of natural numbers with $p_{n}<q_{n}<p_{n+1}(n\in \mathbb{N})$ and a sequence $(x_{n})_{n}$ in $A$ such that
$$\sum_{k=p_{n}}^{q_{n}}|x_{n}(k)|>c, \quad n=1,2\cdots$$
By Hahn-Banach Theorem, for each $n\in \mathbb{N}$, we can find $(\alpha_{n}(k))_{k=p_{n}}^{q_{n}}$ with
$\sup_{p_{n}\leq k\leq q_{n}}|\alpha_{n}(k)|=1$ such that
\begin{equation}\label{22}
\sum_{k=p_{n}}^{q_{n}}x_{n}(k)\alpha_{n}(k)>c.
\end{equation}
For each $n\in \mathbb{N}$, we set $f_{n}\in l_{\infty}$ by

\[f_{n}(k)= \left\{ \begin{array}
                    {r@{\quad,\quad}l}

 \alpha_{n}(k) & p_{n}\leq k\leq q_{n}\\ 0 & otherwise
 \end{array} \right. \]

Then $(f_{n})_{n}$ is weakly $1$-summable and $\|(f_{n})_{n}\|_{1}^{w}\leq 1$. Indeed, for each $x\in X$, we have

\begin{align*}
\sum_{n=1}^{\infty}|<f_{n},x>|&=\sum_{n=1}^{\infty}|\sum_{k=p_{n}}^{q_{n}}x(k)\alpha_{n}(k)|\\
&\leq \sum_{n=1}^{\infty}\sum_{k=p_{n}}^{q_{n}}|x(k)||\alpha_{n}(k)|\\
&\leq \sum_{n=1}^{\infty}\sum_{k=p_{n}}^{q_{n}}|x(k)|\leq \|x\|.\\
\end{align*}
It follows from (\ref{22}) that
$$\sup_{x\in A}|<f_{n},x>|\geq |<f_{n},x_{n}>|=\sum_{k=p_{n}}^{q_{n}}x_{n}(k)\alpha_{n}(k)>c, \quad n=1,2,\ldots.$$
This implies that $\iota_{1}(A)\geq c$.

Since $c\in (0,wk_{X}(A))$ is arbitrary, we get $$wk_{X}(A)\leq \iota_{1}(A).$$

Step 2. $\iota_{1}(A)\leq wk_{X}(A)$.

Assume that $\iota_{1}(A)>0$ and fix an arbitrary $\epsilon\in (0,\iota_{1}(A))$. Then there exist a sequence $(x_{n})_{n}$ in $A$ and $(f_{n})_{n}\in l^{w}_{1}(X^{*})$ with $\|(f_{n})_{n}\|_{1}^{w}\leq 1$ such that
\begin{equation}\label{50}
\sum_{k=1}^{\infty}f_{n}(k)x_{n}(k)>\epsilon, \quad n=1,2,\cdots
\end{equation}
It follows from $\|(f_{n})_{n}\|_{1}^{w}\leq 1$ that
\begin{equation}\label{52}
\sum_{n=1}^{\infty}|f_{n}(k)|\leq 1, \quad k=1,2,\cdots
\end{equation}
Let $\delta>0$ be arbitrary. By (\ref{50}) and (\ref{52}), there exists $N_{1}\in \mathbb{N}$ such that
$$\sum_{k\in E_{1}}f_{1}(k)x_{1}(k)>\epsilon,$$
where $E_{1}=\{k: \sum_{n=N_{1}}^{\infty}|f_{n}(k)|<\frac{\delta}{2}\}$.

Set $\widetilde{f_{1}}=f_{1}\cdot \chi_{E_{1}}$ and $\widetilde{f}_{n}=f_{n}\cdot \chi_{E^{c}_{1}}(n\geq N_{1})$. By (\ref{50}) and the definition of $E_{1}$, we have
\begin{equation}\label{51}
<\widetilde{f}_{n},x_{n}>=\sum_{k=1}^{\infty}f_{n}(k)x_{n}(k)-\sum_{k\in E_{1}}f_{n}(k)x_{n}(k)>\epsilon-\frac{\delta}{2},
\end{equation}
for each $n\geq N_{1}$.

Inequality (\ref{52}) yields that
\begin{equation}\label{53}
\sum_{n=N_{1}}^{\infty}|\widetilde{f}_{n}(k)|\leq\sum_{n=N_{1}}^{\infty}|f_{n}(k)|\leq 1, \quad k=1,2,\cdots
\end{equation}
Combining (\ref{51}) with (\ref{53}), we obtain $N_{2}>N_{1}$ such that
\begin{equation}\label{54}
\sum_{k\in E_{2}}\widetilde{f}_{N_{1}}(k)x_{N_{1}}(k)>\epsilon-\frac{\delta}{2},
\end{equation}
where $E_{2}=\{k:\sum_{n=N_{2}}^{\infty}|\widetilde{f}_{n}(k)|<\frac{\delta}{2^{2}}\}$.

Set $\widetilde{\widetilde{f}_{2}}=\widetilde{f}_{N_{1}}\cdot \chi_{E_{2}}$ and ${\widetilde{\widetilde{f}}}_{n}=\widetilde{f}_{n}\cdot \chi_{E^{c}_{2}}(n\geq N_{2})$. Then, by (\ref{54}), we get
$$\sum_{k\in F_{1}}|x_{N_{1}}(k)|\geq \sum_{k=1}^{\infty}\widetilde{\widetilde{f}_{2}}(k)x_{N_{1}}(k)>\epsilon-\frac{\delta}{2},$$
where $F_{1}$ is the support of $\widetilde{\widetilde{f}_{2}}$.

Moreover, by (\ref{51}) and the definition of $E_{2}$, we get
\begin{equation}\label{55}
<{\widetilde{\widetilde{f}}}_{n},x_{n}>=\sum_{k=1}^{\infty}\widetilde{f}_{n}(k)x_{n}(k)-
\sum_{k\in E_{2}}\widetilde{f}_{n}(k)x_{n}(k)>\epsilon-\frac{\delta}{2}-\frac{\delta}{2^{2}},
\end{equation}
for each $n\geq N_{2}$.

By (\ref{55}), there exists $N_{3}>N_{2}$ such that
\begin{equation}\label{56}
\sum_{k\in E_{3}}{\widetilde{\widetilde{f}}}_{N_{2}}(k)x_{N_{2}}(k)>\epsilon-\frac{\delta}{2}-\frac{\delta}{2^{2}},
\end{equation}
where $E_{3}=\{k: \sum_{n=N_{3}}^{\infty}|{\widetilde{\widetilde{f}}}_{n}(k)|<\frac{\delta}{2^{3}}\}$.

Set $\widetilde{\widetilde{\widetilde{f}_{3}}}=\widetilde{\widetilde{f}}_{N_{2}}\cdot \chi_{E_{3}}$ and ${\widetilde{\widetilde{\widetilde{f}}}}_{n}={\widetilde{\widetilde{f}}}_{n}\cdot \chi_{E^{c}_{3}}(n\geq N_{3})$.
Then, by (\ref{56}), we get
$$\sum_{k\in F_{2}}|x_{N_{2}}(k)|\geq \sum_{k=1}^{\infty}\widetilde{\widetilde{\widetilde{f}_{3}}}(k)x_{N_{2}}(k)>\epsilon-\frac{\delta}{2}-\frac{\delta}{2^{2}},$$
where $F_{2}$ is the support of $\widetilde{\widetilde{\widetilde{f}_{3}}}$.

Since $\widetilde{\widetilde{f}_{2}}$ is disjoint from $\widetilde{\widetilde{f}}_{N_{2}}$, the set $F_{2}$ is disjoint from $F_{1}$.

As in the proof of Theorem \ref{5.8}, we get a subsequence $(x_{N_{k}})_{k}$ of $(x_{n})_{n}$ and a sequence $(F_{k})_{k}$ of pairwise disjoint subsets of $\mathbb{N}$ such that
$$\sum_{i\in F_{k}}|x_{N_{k}}(i)|>\epsilon-\sum_{i=1}^{k}\frac{\delta}{2^{i}}>\epsilon-\delta, \quad k=1,2,\cdots$$

Finally, we claim: $wk_{X}(A)\geq \epsilon-\delta.$

Let us fix $n\in \mathbb{N}$. Then, for each $k\in \mathbb{N}$, we have
\begin{align*}
\epsilon-\delta&<\sum_{i=1}^{\infty}|x_{N_{k}}(i)|\chi_{F_{k}}(i)\\
&=\sum_{i=1}^{n}|x_{N_{k}}(i)|\chi_{F_{k}}(i)+\sum_{i=n+1}^{\infty}|x_{N_{k}}(i)|\chi_{F_{k}}(i)\\
&\leq \sum_{i=1}^{n}\chi_{F_{k}}(i)+\sup_{x\in A}\sum_{i=n+1}^{\infty}|x(i)|.\\
\end{align*}
Since $(F_{k})_{k}$ is disjoint pairwise, $\sum_{i=1}^{n}\chi_{F_{k}}(i)=0$ for $k$ large enough.

This implies $$\epsilon-\delta\leq \inf_{n}\sup_{x\in A}\sum_{i=n+1}^{\infty}|x(i)|.$$

It follows from \cite[Proposition 7.3]{KKS} that $$\epsilon-\delta\leq wk_{X}(A).$$
Since $\delta>0$ is arbitrary, we get $$\epsilon\leq wk_{X}(A).$$
The arbitrariness of $\epsilon\in (0,\iota_{1}(A))$ concludes the proof.

\end{proof}

\begin{defn}
Let $1\leq p\leq\infty$. We say that a Banach space $X$ has \textit{quantitative Pe{\l}czy\'{n}ski's property (V) of order $p$} (property $p$-$(V)_{q}$ in short) with a constant $C>0$ if for every Banach space $Y$ and every operator $T: X\rightarrow Y$, one has $$wk_{X^{*}}(T^{*})\leq C\cdot uc_{p}(T).$$ If a Banach space $X$ has property $p$-$(V)_{q}$ with some constant $C>0$, we say that $X$ has property $p$-$(V)_{q}$.
\end{defn}
Clearly, if a Banach space $X$ has property $p$-$(V)_{q}$, then it has property $p$-(V).

Let $1\leq p\leq\infty$ and $X$ be a Banach space. For a bounded subset $A$ of $X^{*}$, we set
$$\eta_{p}(A)=\sup\{\limsup_{n}\sup_{x^{*}\in A}|<x^{*},x_{n}>|: (x_{n})_{n}\in l^{w}_{p}(X), \|(x_{n})_{n}\|_{p}^{w}\leq 1\}.$$

The proof of the following theorem is similar to that of Theorem \ref{5.10}.
\begin{thm}
Let $X$ be a Banach space and $1\leq p<\infty$. The following statements are equivalent:
\item[(1)]$X$ has property $p$-$(V)_{q}$;
\item[(2)]there exists a constant $C>0$ such that for each bounded subset $A$ of $X^{*}$, one has
$$wk_{X^{*}}(A)\leq C\cdot\eta_{p}(A).$$
\end{thm}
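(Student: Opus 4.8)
The plan is to follow the proof of Theorem~\ref{5.10} almost verbatim, interchanging the roles of $X$ and $X^{*}$ and using as the basic gadget an operator \emph{into} $l_{\infty}$ rather than \emph{out of} $l_{1}$.

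For $(1)\Rightarrow(2)$, assume $X$ has property $p$-$(V)_{q}$ with a constant $C>0$ and fix a bounded subset $A$ of $X^{*}$; the target is an estimate $wk_{X^{*}}(A)\le 2C\cdot\eta_{p}(A)$, so any constant suffices for the qualitative statement. First I would prove $wck_{X^{*}}(A)\le C\cdot\eta_{p}(A)$. One may assume $wck_{X^{*}}(A)>0$; pick $\varepsilon\in(0,wck_{X^{*}}(A))$ and a sequence $(x^{*}_{n})_{n}$ in $A$ with $\varepsilon<wck_{X^{*}}((x^{*}_{n})_{n})$, and, exactly as in the proof of Theorem~\ref{3.1}, form the bounded operator $T:X\to l_{\infty}$, $Tx=(\langle x^{*}_{n},x\rangle)_{n}$, which satisfies $T^{*}e_{n}=x^{*}_{n}$ for the unit vectors $(e_{n})_{n}$ of $l_{1}\subseteq l_{\infty}^{*}$. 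Then $\{x^{*}_{n}:n\in\mathbb{N}\}\subseteq T^{*}B_{l_{\infty}^{*}}$, so, using that $wck_{X^{*}}\le wk_{X^{*}}$ (from (\ref{8})), that $wk_{X^{*}}$ is monotone, and property $p$-$(V)_{q}$,
$$\varepsilon<wck_{X^{*}}((x^{*}_{n})_{n})\le wk_{X^{*}}(T^{*})\le C\cdot uc_{p}(T).$$
Since $\|Tx\|=\sup_{n}|\langle x^{*}_{n},x\rangle|\le\sup_{x^{*}\in A}|\langle x^{*},x\rangle|$ for all $x\in X$, the definitions of $uc_{p}$ and $\eta_{p}$ give $uc_{p}(T)\le\eta_{p}(A)$, hence $\varepsilon<C\cdot\eta_{p}(A)$. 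Letting $\varepsilon$ and the sequence vary proves the claim, and a final use of (\ref{8}) in the form $wk_{X^{*}}\le\gamma_{X^{*}}\le 2\,wck_{X^{*}}$ yields $wk_{X^{*}}(A)\le 2C\cdot\eta_{p}(A)$.

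For $(2)\Rightarrow(1)$, given any operator $T:X\to Y$ I would apply the hypothesis of (2) to $A=T^{*}B_{Y^{*}}$; since $\sup_{y^{*}\in B_{Y^{*}}}|\langle T^{*}y^{*},x\rangle|=\|Tx\|$ for every $x$, the definition of $\eta_{p}$ collapses to $\eta_{p}(T^{*}B_{Y^{*}})=uc_{p}(T)$, so $wk_{X^{*}}(T^{*})=wk_{X^{*}}(T^{*}B_{Y^{*}})\le C\cdot\eta_{p}(T^{*}B_{Y^{*}})=C\cdot uc_{p}(T)$, i.e.\ $X$ has property $p$-$(V)_{q}$.

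I do not expect any genuine obstacle: the argument is the dual mirror of Theorem~\ref{5.10}. The only point requiring a little care is the usual bookkeeping with the quantities in (\ref{8}): passing from $wck$ to $wk$ costs a factor $2$, so (1) with constant $C$ only yields the inequality in (2) with constant $2C$, whereas (2) with constant $C$ returns (1) with the same $C$ --- precisely the asymmetry already present in Theorem~\ref{5.10}, and harmless for the stated equivalence.
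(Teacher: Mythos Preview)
Your proposal is correct and follows exactly the route the paper intends: it is the dual mirror of the proof of Theorem~\ref{5.10}, with the operator $T:X\to l_{\infty}$, $Tx=(\langle x^{*}_{n},x\rangle)_{n}$ (already used in the proof of Theorem~\ref{3.1}) playing the role that $T:l_{1}\to X$ played there. The bookkeeping with (\ref{8}) and the resulting factor~$2$ are handled correctly.
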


\begin{thm}
Let $1\leq p\leq \infty$. If a Banach space $X$ has property $p$-$(V)_{q}$ with a constant $C$, then every quotient of $X$ has property $p$-$(V)_{q}$ with $2C$.
\end{thm}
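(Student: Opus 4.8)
The plan is to transpose the argument of Theorem~\ref{5.5} to the quotient setting by duality, using that the adjoint of a quotient map is an isometric embedding. Let $M$ be a closed subspace of $X$, let $Q\colon X\to X/M$ be the quotient map, and recall that $Q^{*}\colon (X/M)^{*}\to X^{*}$ is an isometric embedding with range $M^{\perp}$. Fix a Banach space $Y$ and an operator $S\colon X/M\to Y$; the goal is to bound $wk_{(X/M)^{*}}(S^{*})$ by $2C\cdot uc_{p}(S)$.

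First I would apply the hypothesis to $SQ\colon X\to Y$, which gives $wk_{X^{*}}((SQ)^{*})\le C\cdot uc_{p}(SQ)$. Then I would record two routine observations. First, $uc_{p}(SQ)\le uc_{p}(S)$: any $(x_{n})_{n}\in l^{w}_{p}(X)$ with $\|(x_{n})_{n}\|_{p}^{w}\le 1$ is sent by $Q$ to a sequence $(Qx_{n})_{n}\in l^{w}_{p}(X/M)$ with $\|(Qx_{n})_{n}\|_{p}^{w}\le 1$ (because $\|Q\|\le 1$), and $\limsup_{n}\|SQx_{n}\|=\limsup_{n}\|S(Qx_{n})\|\le uc_{p}(S)$. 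Second, $(SQ)^{*}=Q^{*}S^{*}$, so $(SQ)^{*}B_{Y^{*}}=Q^{*}(S^{*}B_{Y^{*}})$; since $Q^{*}$ carries $(X/M)^{*}$ isometrically onto the closed subspace $M^{\perp}$ of $X^{*}$, Lemma~\ref{5.4} applied with this subspace yields
$$wk_{(X/M)^{*}}(S^{*})=wk_{M^{\perp}}\big(Q^{*}(S^{*}B_{Y^{*}})\big)\le 2\,wk_{X^{*}}\big(Q^{*}(S^{*}B_{Y^{*}})\big)=2\,wk_{X^{*}}\big((SQ)^{*}\big).$$
Chaining the three estimates gives $wk_{(X/M)^{*}}(S^{*})\le 2\,wk_{X^{*}}((SQ)^{*})\le 2C\cdot uc_{p}(SQ)\le 2C\cdot uc_{p}(S)$, which is exactly what is needed.

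I expect no real obstacle here: every step is a short computation, and the argument is formally parallel to that of Theorem~\ref{5.5}. The one point deserving attention is the identification of $(X/M)^{*}$ with the subspace $M^{\perp}$ of $X^{*}$ through the isometry $Q^{*}$, since it is this passage that invokes Lemma~\ref{5.4} and accounts for the constant doubling from $C$ to $2C$; it should also be kept in mind that $wk_{(X/M)^{*}}(S^{*})$ abbreviates $wk_{(X/M)^{*}}(S^{*}B_{Y^{*}})$, so that the displayed chain genuinely compares the right quantities.
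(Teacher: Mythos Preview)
Your proof is correct and follows essentially the same route as the paper: apply the hypothesis to $SQ$, use Lemma~\ref{5.4} (inequality~(\ref{2})) via the isometric embedding $Q^{*}$ to pass from $wk_{X^{*}}$ to $wk_{(X/M)^{*}}$ at the cost of a factor $2$, and finish with $uc_{p}(SQ)\le uc_{p}(S)$. The paper's version is terser but the argument is identical.
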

\begin{proof}
Let $M$ be a closed subspace of $X$. Suppose that $X$ has property $p$-$(V)_{q}$ with a constant $C>0$. We'll show that the quotient $X/M$ has property $p$-$(V)_{q}$ with $2C$. Fix a Banach space $Y$ and an operator $S: X/M \rightarrow Y$. Then we have
$$wk_{X^{*}}(Q^{*}S^{*})\leq C\cdot uc_{p}(SQ),$$ where $Q: X\rightarrow X/M$ is the canonical quotient map.
By (\ref{2}), we get $$wk_{(X/M)^{*}}(S^{*})\leq 2wk_{X^{*}}(Q^{*}S^{*})\leq 2C\cdot uc_{p}(SQ)\leq 2C\cdot uc_{p}(S),$$
which completes the proof.

\end{proof}

\begin{thm}\label{5.3}
Let $X$ be a Banach space and $1\leq p\leq\infty$. Then
\item[(1)]If $X$ has property $p$-$(V)_{q}$ with a constant $C$, then $X^{*}$ has property $p$-$(V^{*})_{q}$ with the same constant $C$;
\item[(2)]If $X^{*}$ has property $p$-$(V)_{q}$ with a constant $C$, then $X$ has property $p$-$(V^{*})_{q}$ with $2C$.
\end{thm}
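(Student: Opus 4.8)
The plan is to deduce both parts from the defining inequality of property $p$-$(V)_{q}$ by passing to adjoints and canonical embeddings, the main thing to get right being the bidual in which each weak$^{*}$ closure is formed. I write $\kappa_{Z}\colon Z\to Z^{**}$ for the canonical embedding.

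\emph{Part (1).} Given a Banach space $Y$ and an operator $S\colon Y\to X^{*}$, the goal is $wk_{X^{*}}(SB_{Y})\le C\cdot uc_{p}(S^{*})$. The natural operator out of $X$ to feed into the hypothesis is $T:=S^{*}\kappa_{X}\colon X\to Y^{*}$. A short computation of $\langle T^{*}\kappa_{Y}y,x\rangle$ for $y\in Y$, $x\in X$ gives $T^{*}\kappa_{Y}=S$, so $SB_{Y}=T^{*}(\kappa_{Y}B_{Y})\subseteq T^{*}B_{Y^{**}}$ as subsets of $X^{*}$; forming weak$^{*}$ closures in $X^{***}$ and using that $\widehat{d}(\,\cdot\,,X^{*})$ is monotone under inclusion yields $wk_{X^{*}}(SB_{Y})\le wk_{X^{*}}(T^{*})$. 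Property $p$-$(V)_{q}$ of $X$ applied to $T$ then gives $wk_{X^{*}}(T^{*})\le C\cdot uc_{p}(T)$, so it remains to check $uc_{p}(T)\le uc_{p}(S^{*})$. Since $\|Tx\|=\|S^{*}\kappa_{X}x\|$, this reduces to the fact that $\kappa_{X}$ maps the unit ball of $l^{w}_{p}(X)$ into that of $l^{w}_{p}(X^{**})$: for $\Phi\in B_{X^{***}}$ the restriction $\Phi|_{\kappa_{X}(X)}$ corresponds, via the onto isometry $\kappa_{X}$, to some $x^{*}\in B_{X^{*}}$ with $\langle\Phi,\kappa_{X}x_{n}\rangle=\langle x^{*},x_{n}\rangle$ for every $n$, whence $\|(\kappa_{X}x_{n})_{n}\|_{p}^{w}\le\|(x_{n})_{n}\|_{p}^{w}$ (and for $p=\infty$ the same computation shows that $(\kappa_{X}x_{n})_{n}$ is weakly null in $X^{**}$ whenever $(x_{n})_{n}$ is weakly null in $X$). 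Combining the three estimates proves (1) with the same constant $C$; no loss occurs because all the sets involved already lie in $X^{*}$.

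\emph{Part (2).} Given $T\colon Y\to X$, the goal is $wk_{X}(TB_{Y})\le 2C\cdot uc_{p}(T^{*})$. Here I apply property $p$-$(V)_{q}$ of $X^{*}$ directly to the operator $T^{*}\colon X^{*}\to Y^{*}$, obtaining $wk_{X^{**}}(T^{**})\le C\cdot uc_{p}(T^{*})$ with no further work on the right-hand side. From the identity $T^{**}\kappa_{Y}=\kappa_{X}T$ one gets $\kappa_{X}(TB_{Y})=T^{**}(\kappa_{Y}B_{Y})\subseteq T^{**}B_{Y^{**}}$, hence $wk_{X^{**}}(\kappa_{X}(TB_{Y}))\le wk_{X^{**}}(T^{**})$. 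Finally, regarding $X$ as a closed subspace of $X^{**}$ via $\kappa_{X}$ and invoking Lemma~\ref{5.4}, together with the invariance of $wk$ under isometric isomorphisms (which identifies $wk_{\kappa_{X}(X)}(\kappa_{X}(TB_{Y}))$ with $wk_{X}(TB_{Y})$), one obtains $wk_{X}(TB_{Y})\le 2\,wk_{X^{**}}(\kappa_{X}(TB_{Y}))\le 2C\cdot uc_{p}(T^{*})$, which is (2).

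Apart from the adjoint identities $T^{*}\kappa_{Y}=S$ and $T^{**}\kappa_{Y}=\kappa_{X}T$, the monotonicity of $wk$ and $uc_{p}$ under inclusion of the relevant sets, and the isometric invariance of $wk$, all of which are routine, the only delicate point is the inequality $\|(\kappa_{X}x_{n})_{n}\|_{p}^{w}\le\|(x_{n})_{n}\|_{p}^{w}$ ensuring that the canonical embedding preserves the relevant balls of weakly $p$-summable sequences; this is the step I expect to need the most care, and it is handled by the restriction argument above. The asymmetry between the two constants is genuine: in (1) everything already takes place inside $X^{*}$, whereas in (2) one must pass from $wk_{X^{**}}$ to $wk_{X}$, and the factor $2$ is exactly the price charged by Lemma~\ref{5.4}.
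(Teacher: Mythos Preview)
Your proof is correct and follows essentially the same route as the paper: for (1) both you and the authors apply the hypothesis to $T=S^{*}\kappa_{X}$ (their $S^{*}J_{X}$) and use $T^{*}\kappa_{Y}=S$ together with $uc_{p}(S^{*}\kappa_{X})\le uc_{p}(S^{*})$. For (2) the paper simply cites part (1) together with Theorem~\ref{5.5} (closed subspaces inherit $p$-$(V^{*})_{q}$ with the constant doubled), whereas you unwind this by applying the hypothesis directly to $T^{*}$ and then invoking Lemma~\ref{5.4}; since Theorem~\ref{5.5} is proved precisely via Lemma~\ref{5.4} in the same way, the two arguments coincide.
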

\begin{proof}
(1)\quad Let $Z$ be a Banach space and $S$ be an operator from $Z$ into $X^{*}$. Applying the assumption to $S^{*}J_{X}$, we get $$wk_{X^{*}}(J_{X}^{*}S^{**})\leq C\cdot uc_{p}(S^{*}J_{X}).$$ This yields
$$wk_{X^{*}}(S)=wk_{X^{*}}(J_{X}^{*}S^{**}J_{Z})\leq wk_{X^{*}}(J_{X}^{*}S^{**})\leq C\cdot uc_{p}(S^{*}J_{X})\leq C\cdot uc_{p}(S^{*}),$$
which completes the proof of (1).

The assertion (2) follows immediately from (1) and Theorem \ref{5.5}.

\end{proof}

Let us mention that the converse of Theorem \ref{5.3} is false. Indeed, $X=(\sum_{n=1}^{\infty}\oplus l^{n}_{\infty})_{1}$ enjoys property $1$-$(V^{*})_{q}$ with $1$ that follows from the following Theorem \ref{5.13}. But $X^{*}$ fails property $p$-(V) for each $1\leq p<\infty$ because $X^{*}$ contains a $1$-complemented subspace isometric to $l_{1}$ and $l_{1}$ fails
property $p$-(V) for each $1\leq p<\infty$ by Theorem \ref{3.8}.

\begin{thm}\label{5.13}
Let $X=(\sum_{\gamma\in \Gamma}\oplus X_{\gamma})_{1}(\Gamma$ an infinite set), where each $X_{\gamma}$ is reflexive. Then
$$wk_{X}(A)\leq\iota_{1}(A)$$ for each bounded subset $A$ of $X$.
\end{thm}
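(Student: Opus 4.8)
The plan is to reduce the statement to an $l_{1}$-type tail estimate and then manufacture a weakly $1$-summable sequence of functionals out of the ``escaping mass'', following the pattern of the proofs of Theorems \ref{5.8} and \ref{5.12}. We may assume $A\subseteq B_{X}$. Writing $\pi_{\gamma}\colon X\to X_{\gamma}$ for the canonical projection, so that $\|x\|=\sum_{\gamma\in\Gamma}\|\pi_{\gamma}x\|$ for $x\in X$, I would introduce the auxiliary quantity
$$\tau(A)=\inf\Big\{\,\sup_{x\in A}\sum_{\gamma\in\Gamma\setminus F}\|\pi_{\gamma}x\|\ :\ F\subseteq\Gamma\ \text{finite}\,\Big\}$$
and prove the theorem by establishing the two inequalities $wk_{X}(A)\le\tau(A)$ and $\tau(A)\le\iota_{1}(A)$.

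For the first inequality, fix $\eta>0$ and a finite $F\subseteq\Gamma$ with $\sup_{x\in A}\sum_{\gamma\notin F}\|\pi_{\gamma}x\|<\tau(A)+\eta$, and let $P_{F}\colon X\to X$ be the projection keeping only the coordinates in $F$. Its range $(\sum_{\gamma\in F}\oplus X_{\gamma})_{1}$ is a finite $l_{1}$-sum of reflexive spaces, hence reflexive, so $P_{F}(A)$ is relatively weakly compact and its weak closure $K$ is a weakly compact subset of $X$. Since $\|x-P_{F}x\|=\sum_{\gamma\notin F}\|\pi_{\gamma}x\|<\tau(A)+\eta$ for every $x\in A$, we get $\omega(A)\le\widehat{d}(A,K)\le\tau(A)+\eta$, and therefore $wk_{X}(A)\le\omega(A)\le\tau(A)+\eta$ by the inequalities recalled after (\ref{8}). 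Letting $\eta\to0$ gives $wk_{X}(A)\le\tau(A)$. This is the only step that uses the reflexivity hypothesis, and it plays here the role that \cite[Proposition 7.1]{KKS} and \cite[Lemma 7.2]{KKS} play in Theorems \ref{5.8} and \ref{5.12}; it is essentially a quantitative reading of the $l_{1}$-sum criterion Lemma \ref{3.14}.

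For the second inequality, suppose $\tau(A)>\epsilon$, so $\sup_{x\in A}\sum_{\gamma\notin F}\|\pi_{\gamma}x\|>\epsilon$ for every finite $F\subseteq\Gamma$. A gliding-hump induction then yields pairwise disjoint finite sets $\sigma_{1},\sigma_{2},\dots\subseteq\Gamma$ and points $x_{n}\in A$ with $\sum_{\gamma\in\sigma_{n}}\|\pi_{\gamma}x_{n}\|>\epsilon$ for every $n$: having chosen $\sigma_{1},\dots,\sigma_{n-1}$, pick $x_{n}\in A$ with $\sum_{\gamma\notin\sigma_{1}\cup\cdots\cup\sigma_{n-1}}\|\pi_{\gamma}x_{n}\|>\epsilon$, and since this sum runs over the countable support of $x_{n}$ and converges, choose a finite $\sigma_{n}\subseteq\Gamma\setminus(\sigma_{1}\cup\cdots\cup\sigma_{n-1})$ with $\sum_{\gamma\in\sigma_{n}}\|\pi_{\gamma}x_{n}\|>\epsilon$. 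For each $n$ and each $\gamma\in\sigma_{n}$ choose by Hahn--Banach a functional $g^{(n)}_{\gamma}\in B_{X_{\gamma}^{*}}$ with $\langle g^{(n)}_{\gamma},\pi_{\gamma}x_{n}\rangle=\|\pi_{\gamma}x_{n}\|$, and let $x_{n}^{*}\in X^{*}=(\sum_{\gamma\in\Gamma}\oplus X_{\gamma}^{*})_{\infty}$ be the functional whose $\gamma$-component equals $g^{(n)}_{\gamma}$ for $\gamma\in\sigma_{n}$ and $0$ otherwise, so that $\|x_{n}^{*}\|=\max_{\gamma\in\sigma_{n}}\|g^{(n)}_{\gamma}\|\le1$. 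Because the $\sigma_{n}$ are pairwise disjoint, $\sum_{n}|\langle x_{n}^{*},x\rangle|\le\sum_{n}\sum_{\gamma\in\sigma_{n}}\|\pi_{\gamma}x\|\le\|x\|$ for every $x\in X$, so (exactly as in the proofs of Theorems \ref{5.8} and \ref{5.12}, where it is enough to test against $X$ rather than $X^{**}$) $(x_{n}^{*})_{n}\in l^{w}_{1}(X^{*})$ with $\|(x_{n}^{*})_{n}\|_{1}^{w}\le1$. Since $\sup_{x\in A}|\langle x_{n}^{*},x\rangle|\ge\langle x_{n}^{*},x_{n}\rangle=\sum_{\gamma\in\sigma_{n}}\|\pi_{\gamma}x_{n}\|>\epsilon$ for all $n$, we obtain $\iota_{1}(A)\ge\epsilon$; letting $\epsilon\uparrow\tau(A)$ gives $\tau(A)\le\iota_{1}(A)$. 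Combining the two inequalities yields $wk_{X}(A)\le\iota_{1}(A)$.

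I do not anticipate a genuine obstacle. The argument is the disjoint-support functional construction already used twice in this section, glued to a quantitative version of Lemma \ref{3.14}; the points that require care are the uncountability of $\Gamma$ (handled by working with finite subsets and the countability of supports of individual vectors) and the verification that $(x_{n}^{*})_{n}$ is weakly $1$-summable with norm at most $1$. The mildly delicate conceptual point is the passage $wk_{X}(A)\le\tau(A)$, i.e. recognizing that truncating to finitely many coordinates lands in a reflexive subspace and hence produces the required weakly compact approximant; everything else is routine.
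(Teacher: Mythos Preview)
Your proof is correct and follows essentially the same approach as the paper: obtain pairwise disjoint finite blocks $\sigma_n\subset\Gamma$ carrying mass $>\epsilon$ for suitable $x_n\in A$, then build disjointly supported norming functionals $x_n^{*}\in X^{*}$ and verify $\|(x_n^{*})_n\|_1^{w}\le 1$. The only difference is organizational: the paper extracts the blocks by invoking \cite[Lemma 7.2]{KKS} directly, whereas you make the intermediate tail quantity $\tau(A)$ explicit and prove $wk_X(A)\le\omega(A)\le\tau(A)$ in a self-contained way using reflexivity of the finite sub-sums, which is a clean substitute for that citation.
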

\begin{proof}
Let $A$ be a bounded subset of $X$. We may assume that $wk_{X}(A)>0$ and fix an arbitrary $c\in(0,wk_{X}(A))$. By \cite[Lemma 7.2]{KKS}, we obtain, by induction, a sequence $(F_{n})_{n}$ of pairwise disjoint finite subsets of $\Gamma$ and a sequence $(x_{n})_{n}$ in $A$ such that
$$\sum_{\gamma\in F_{n}}\|x_{n}(\gamma)\|>c,\quad n=1,2,\cdots$$
By Hahn-Banach Theorem, for each $n\in \mathbb{N}$, there exists $(f_{n}(\gamma))_{\gamma\in F_{n}}\in (\sum_{\gamma\in F_{n}}\oplus X^{*}_{\gamma})_{\infty}$ with $\max_{\gamma\in F_{n}}\|f_{n}(\gamma)\|=1$ such that $$\sum_{\gamma\in F_{n}}<f_{n}(\gamma),x_{n}(\gamma)>c.$$
For each $n\in\mathbb{N}$, define $\varphi_{n}\in (\sum_{\gamma\in \Gamma}\oplus X^{*}_{\gamma})_{\infty}$ by

\[\varphi_{n}(\gamma)= \left\{ \begin{array}
                    {r@{\quad,\quad}l}

 f_{n}(\gamma) & \gamma\in F_{n}\\ 0 & otherwise
 \end{array} \right. \]

Since $(F_{n})_{n}$ is pairwise disjoint, it is easy to verify that $(\varphi_{n})_{n}$ is weakly $1$-summable and $\|(\varphi_{n})_{n}\|^{w}_{1}\leq 1$. Moreover, for each $n\in\mathbb{N}$,
$$\sup_{x\in A}|<\varphi_{n},x>|\geq |<\varphi_{n},x_{n}>|=\sum_{\gamma\in F_{n}}<f_{n}(\gamma),x_{n}(\gamma)>c.$$
This yields that $\iota_{1}(A)\geq c$. Since $c\in(0,wk_{X}(A))$ is arbitrary, we get the conclusion.

\end{proof}

\begin{thm}
Let $X=c_{0}(\mathbb{N}, \mathbb{R})$. Then $$wk_{X}(A)=\eta_{1}(A)$$ for each bounded subset $A$ of $X^{*}$.
\end{thm}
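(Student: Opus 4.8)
Since $X = c_0$ has separable dual $X^* = \ell_1$, the plan is to show that for a bounded subset $A$ of $\ell_1$ the quantities $wk_{\ell_1}(A)$ and $\eta_1(A)$ coincide, where the weakly $1$-summable sequences witnessing $\eta_1$ live in $c_0$. This is essentially the "$V$-side" twin of Theorem~\ref{5.12}, and I would prove the two inequalities separately, following the same strategy as there.

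\textbf{Inequality $wk_{X^*}(A) \leq \eta_1(A)$.} Assume $wk_{\ell_1}(A) > 0$ and fix $c \in (0, wk_{\ell_1}(A))$. Here I would invoke the appropriate $\ell_1$-analogue of \cite[Lemma 7.2]{KKS} (the same tool used in Theorem~\ref{5.12}): it produces strictly increasing blocks $p_n < q_n < p_{n+1}$ of coordinates and a sequence $(x^*_n)_n$ in $A$ with $\sum_{k=p_n}^{q_n} |x^*_n(k)| > c$ for all $n$. Then set $x_n \in c_0$ to be the block-supported vector $x_n = \sum_{k=p_n}^{q_n} \operatorname{sign}(x^*_n(k))\, e_k$ (a finitely supported, hence $c_0$, vector). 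Because the $x_n$ have pairwise disjoint finite supports, $(x_n)_n$ is weakly $1$-summable with $\|(x_n)_n\|_1^w \leq 1$ — the argument is identical to the one in Theorem~\ref{5.12} for the functionals $f_n$, just with the roles of $X$ and $X^*$ swapped. Finally $\sup_{x^* \in A} |\langle x^*, x_n\rangle| \geq |\langle x^*_n, x_n\rangle| = \sum_{k=p_n}^{q_n} |x^*_n(k)| > c$, so $\eta_1(A) \geq c$; letting $c \uparrow wk_{\ell_1}(A)$ finishes this direction.

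\textbf{Inequality $\eta_1(A) \leq wk_{X^*}(A)$.} Assume $\eta_1(A) > 0$, fix $\epsilon \in (0, \eta_1(A))$, and pick $(x^*_n)_n$ in $A$ and a weakly $1$-summable $(x_n)_n$ in $c_0$ with $\|(x_n)_n\|_1^w \leq 1$ and $\sum_k x^*_n(k) x_n(k) > \epsilon$ for all $n$. From $\|(x_n)_n\|_1^w \leq 1$ one gets $\sum_n |x_n(k)| \leq 1$ for every coordinate $k$ (the analogue of (\ref{52})). Now I would run verbatim the disjointification induction from Step 2 of Theorem~\ref{5.12}: peel off, for successive blocks of the indices $n$, the part of each $x_n$ supported where the tail $\sum_{n \geq N_j} |x_n(k)|$ is small, producing a subsequence $(x^*_{N_k})_k$ of $A$ and pairwise disjoint finite coordinate sets $(F_k)_k$ with $\sum_{i \in F_k} |x^*_{N_k}(i)| > \epsilon - \delta$. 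Then for fixed $n$, splitting $\sum_i |x^*_{N_k}(i)|\chi_{F_k}(i)$ into coordinates $\leq n$ and $> n$ and using that $\sum_{i=1}^n \chi_{F_k}(i) = 0$ for large $k$ (disjointness), one obtains $\epsilon - \delta \leq \inf_n \sup_{x^* \in A} \sum_{i > n} |x^*(i)|$, and \cite[Proposition 7.3]{KKS} identifies the right-hand side with $wk_{\ell_1}(A)$. Letting $\delta \downarrow 0$ and then $\epsilon \uparrow \eta_1(A)$ gives the claim.

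\textbf{Main obstacle.} The two inequalities themselves are routine once the right lemmas from \cite{KKS} are in hand; the only genuine subtlety is making sure the block vectors $x_n$ built in the first step actually have $\|(x_n)_n\|_1^w \leq 1$ with constant exactly $1$ (not merely boundedly), which is what forces the final constant $1$ in $wk_{X}(A) = \eta_1(A)$ — this is where disjointness of supports is essential, and it mirrors exactly the identity $\|(f_n)_n\|_1^w = \sup_n \|\sum_{k=1}^n |f_k|\|$ exploited in Theorems~\ref{5.8} and~\ref{5.12}. A secondary point worth checking is that $c_0$, rather than all of $\ell_\infty = (\ell_1)^*{}'$, genuinely suffices as the test space for $\eta_1$; but since the witnessing vectors produced above are finitely supported this is automatic.
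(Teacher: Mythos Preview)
Your proposal is correct and matches the paper's own approach exactly: the paper does not give a detailed proof but simply states that the argument is ``essentially analogous to Theorem~\ref{5.12}, only interchanging the role of $X$ and $X^{*}$,'' which is precisely what you have carried out. Your expansion of both directions --- using \cite[Lemma~7.2]{KKS} to build disjointly supported witnesses in $c_{0}$ for the inequality $wk_{X^{*}}(A)\leq \eta_{1}(A)$, and running the disjointification induction of Step~2 of Theorem~\ref{5.12} (with \cite[Proposition~7.3]{KKS} at the end) for the reverse inequality --- is the intended argument.
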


The proof is essentially analogous to Theorem \ref{5.12}, only interchanging the role of $X$ and $X^{*}$.

H. Kruli\v{s}ov\'{a} proved in \cite{K} that $C_{0}(\Omega)$ has property $1$-$(V)_{q}$ with constant $\pi$ (constant 2 in the real case) for every locally compact space $\Omega$.

\section*{Acknowledgements} This work is done during Chen's visit to Department of Mathematics, Texas A\&M University. We would like to thank Professor W. B. Johnson for helpful discussions and comments.

\section*{References}

\end{document}